\documentclass[11pt]{amsart}
\usepackage{amsmath}
\usepackage{amsthm}
\usepackage{amsbsy}
\usepackage{amsopn}
\usepackage{amsmath,amscd}
\usepackage{amssymb}
\usepackage{amsfonts}
\usepackage{multirow}
\usepackage{mathrsfs} 
\usepackage[official ]{eurosym}
\usepackage{accents}

\usepackage{graphicx}
\graphicspath{
    {../Pictures/}
}

\newcommand{\da}[2]{{\color{green!80!black} #1
}{\color{blue!90!black}#2 
}}

\newcommand\cI{\mathcal{I}}

\def\bH{\mathbf{H}}

\def\bN{\mathbf{N}}

\def\bP{\mathbf{P}}
\def\bQ{\mathbf{Q}}
\def\bR{\mathbf{R}}

\newcommand\frP{\mathfrak{P}}

\newcommand\sX{\mathscr{X}}
\newcommand\sY{\mathscr{Y}}
\newcommand\sA{\mathscr{A}}

\newcommand\sN{\mathscr{N}}
\newcommand\sM{\mathscr{M}}
\newcommand\sV{\mathscr{V}}

\newcommand\sH{\mathscr{H}}

\newcommand\sP{\mathscr{P}}
\newcommand\sQ{\mathscr{Q}}

\newcommand\sT{\mathscr{T}}
\newcommand\sR{\mathscr{R}}

\newcommand\sU{\mathscr{U}}

 \newcommand\rmint{\mathrm {int}}

\newcommand{\risom}{\stackrel{\sim}{\to}}

 \usepackage[all]{xy}
                \usepackage{amsthm,amsfonts,amsmath,amscd,amssymb,epsfig,verbatim,
}

\usepackage{graphicx}
\usepackage{epstopdf}
\date{\today} 
 
\usepackage{graphicx}

\usepackage[dvipsnames,svgnames,x11names,hyperref]{xcolor}
\usepackage{url,graphicx,verbatim,amssymb,enumerate,stmaryrd}
\usepackage[pagebackref,colorlinks,citecolor=Bittersweet,linkcolor=Bittersweet,urlcolor=Bittersweet,filecolor=Bittersweet]{hyperref}
\usepackage{tikz} 
\usetikzlibrary{arrows,decorations.pathmorphing,backgrounds,fit,positioning,shapes.symbols,chains,calc}
\usepackage[all]{xy}
\xyoption{matrix}
\xyoption{arrow}
\usepackage[hmargin=3cm,vmargin=3cm]{geometry}
\usepackage{setspace,kantlipsum}
\tikzset{help lines/.style={step=#1cm,very thin, color=gray},
help lines/.default=.5} 

\usepackage{booktabs}

\theoremstyle{plain}
\newtheorem{theorem}{Theorem}[section]
\newtheorem{thm}[theorem]{Theorem}
\newtheorem{lemma}[theorem]{Lemma}
\newtheorem{proposition}[theorem]{Proposition}
\newtheorem{prop}[theorem]{Proposition}

\newtheorem{cor}[theorem]{Corollary}

\theoremstyle{definition}
\newtheorem{definition}[theorem]{Definition}
\newtheorem{inductive step}[theorem]{Inductive step}

\newtheorem{inductive lemma}[theorem]{Inductive Lemma}
\theoremstyle{remark}
\newtheorem{example}[theorem]{Example}
\newtheorem{remark}[theorem]{Remark}
\newtheorem*{remark*}{Remark}

\newtheorem*{example*}{Example}

\newcommand{\wt}{\widetilde}
\newcommand{\wh}{\widehat}
\newcommand{\ol}{\overline}

\newcommand{\p}{\partial}

\newcommand{\om}{\omega}
\newcommand{\Om}{\Omega}
\newcommand{\eps}{\varepsilon}

\newcommand{\Skel}{\mathrm {Skel}}
\newcommand{\Cone}{\mathrm {Cone}}

\newcommand{\R}{{\mathbf{R}}}
\newcommand{\C}{{\mathbf{C}}}

\renewcommand{\P}{{\mathbf{P}}}
\newcommand{\Int}{{\rm Int\,}} 

\newcommand{\dist}{{\rm dist}}

\newcommand{\Id}{\mathrm {Id}}

\newcommand{\inv}{\mathrm{inv}}

\newcommand{\attract}{\mathrm{attract}}

\newcommand{\sym}{\mathrm{sym}}

\newcommand{\RR}{\mathbb{R}}

\newcommand{\II}{\mathcal{I}}

\newcommand{\cL}{\mathcal{L}}

\def\Op{{\mathcal O}{\it p}\,}

\newcommand{\fM}{\mathfrak{M}}

\renewcommand{\t}{{\mathfrak t}}

\renewcommand{\u}{{\mathfrak u}}

\numberwithin{figure}{section}

\title{Weinstein manifolds as cotangent buildings}

\author{Daniel  \'Alvarez-Gavela}
\address{Department of Mathematics \\ Brandeis University \\ Waltham, MA, 02139}
\email{dgavela@brandeis.edu}
\thanks{DA was partially supported by NSF grant DMS-1638352 and the Simons Foundation}

\author{Yakov Eliashberg }
\address{Department of Mathematics\\Stanford University \\ Stanford, CA 94305}
\email{eliash@stanford.edu}
\thanks{YE was partially supported by NSF grant DMS-21044773. }

\author{David Nadler}
\address{Department of Mathematics\\University of California, Berkeley\\Berkeley, CA  94720-3840}
\email{nadler@math.berkeley.edu}
\thanks{DN was partially supported by NSF grant DMS-1802373.}

\begin{document}
\begin{abstract}
We introduce the framework of cotangent buildings to complement and refine 
that of Weinstein handlebodies. While 
Weinstein handlebodies are suitable for a ``bottom-up" analysis of the Weinstein structure, 
cotangent buildings also enable a ``top down" analysis. Further, 
cotangent buildings include a precise control over the interaction of any subcollection of the various building blocks, each of which is modeled on the cotangent bundle of a manifold with corners. 
Our main result is
that any Weinstein manifold is Weinstein homotopic to one admitting the structure of a cotangent building. 
 \end{abstract}

\maketitle

 \onehalfspacing
 \tableofcontents
\begin{spacing}{1.4}
 
 \section{Introduction}\label{sec:intro}
       
       \subsection{Main results}
   Let $X$ be a smooth manifold.   The notion of a handlebody presentation 
      $$X= H_k  \cup (\cdots \cup ( H_2 \cup  (H_1\cup H_0))\cdots )
     $$
     as introduced by S.~Smale in \cite{Sm61},  is a basic tool in differential topology.
   In particular it  allows one to describe how  the   topology    of the manifold with boundary
      $X_{j}:= \bigcup_{ i \leq j} H_i
 $ 
   changes when attaching the handle $H_{j+1}$. The data of a handlebody presentation is given in the form of framed embeddings of spheres $S^{i-1}$ in the boundary of $X_j$, where $i$ is the index of the handle $H_{j+1}$.
   
    \begin{figure}[h]
\includegraphics[scale=0.3]{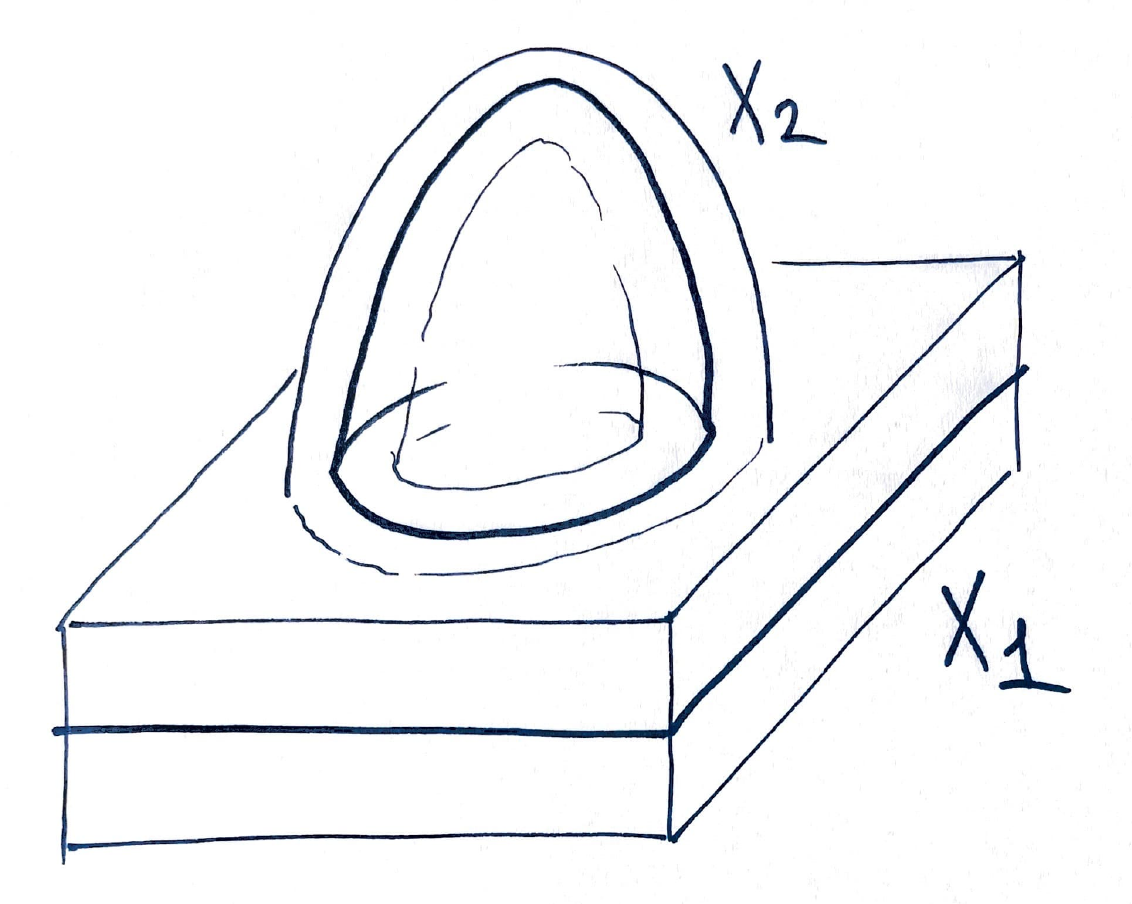}
\caption{A cotangent building with two blocks is similar to a Weinstein handlebody with two handles.}
\label{2-block}
\end{figure}

    \begin{figure}[h]
\includegraphics[scale=0.3]{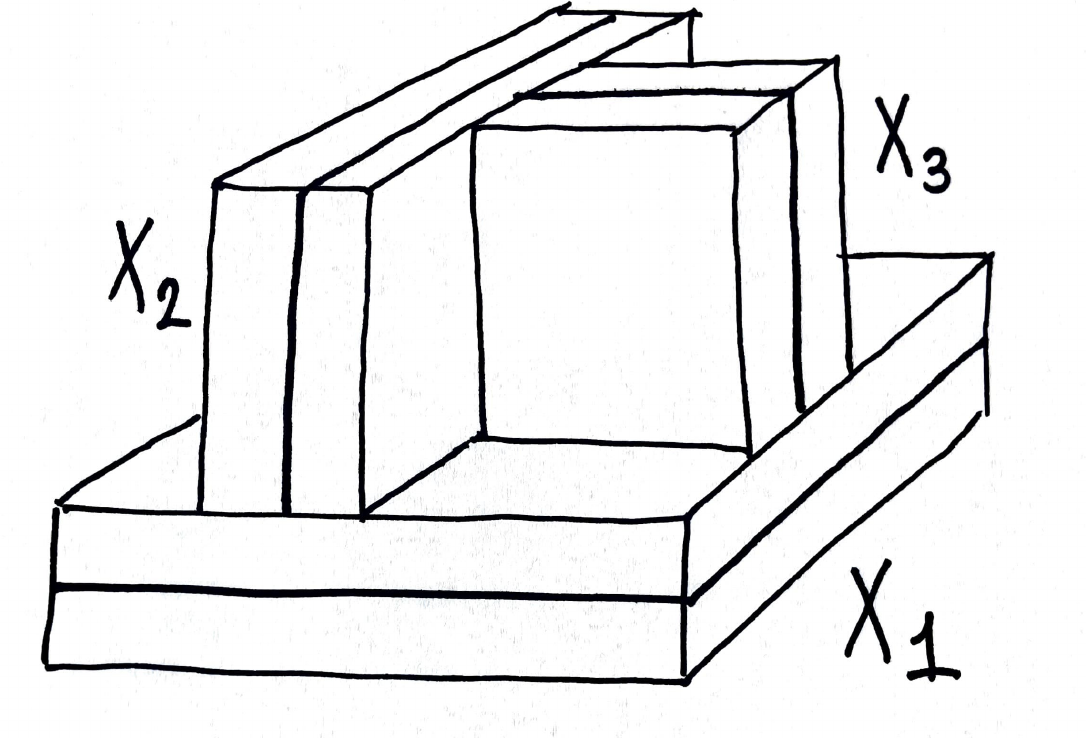}
\caption{A cotangent building with three blocks has more structure than a Weinstein handlebody with three handles, concretely there is greater control in the region where the three blocks meet.}
\label{3-block}
\end{figure}

      When $(X, \lambda)$ is a Weinstein manifold (a Liouville manifold of Weinstein type), a Weinstein handlebody presentation, as      introduced in  \cite{W91,EG91}, allows one to perform  a similar inductive analysis of the    symplectic  topology of $(X, \lambda)$.    A Weinstein handlebody presentation is given in the form of isotropic embeddings of spheres $S^{i-1}$ in the ideal contact boundary of $X_j$, equipped with a framing of their symplectic normal bundles. 
      
       Any Weinstein handle has index $i \leq n$ where $\dim X = 2n$, and in the critical dimension $i=n$, when  
       $H_{j+1} \simeq T^*D^{n}$,
       the attachment is uniquely determined by the Legendrian sphere $S^{n-1} = \p D^n$ inside  the  ideal contact boundary of $X_j$.  Note that the attachment of the handle $H_{j+1}$, and in particular the Legendrian sphere $S^{n-1} = \p D^n$ inside 
        the  ideal contact boundary of $X_j$,  is agnostic to the prior  handlebody presentation of $X_j$. Thus the procedure is  poorly tuned to  understanding the interaction of multiple handles and   also to an inductive analysis of $X$ from the ``top down", i.e.~ a description of  
              $X_{\geq j }:= \bigcup_{i \geq j} H_i$
in terms of $X_{\geq j+1}$  and  the handle $H_{j}$.      

   In this article 
 we present a refinement   of the handlebody  presentation of a Weinstein manifold  $(X, \lambda)$ by  what we call a {\em cotangent  building structure}, denoted  
     $$X=(X_k \to X_{k-1} \to \cdots \to X_1).
     $$
A cotangent building is built out of {\em cotangent blocks}, which are essentially cotangent bundles   $X_j = T^*M_j$ 
     of manifolds with corners $M_j$ with some boundary faces stripped (so for example allowing for subcritical blocks). The precise definition of a cotangent  block is given in Section~\ref{sec:erasing-face} and that of cotangent building in Section \ref{sec:def-W-bldg}.

The key additional structure present in the notion of a cotangent  building  versus a handlebody  presentation is the control we have over the interaction of any subcollection of the cotangent blocks. A cotangent  building with two cotangent  blocks is a modest generalization of a handlebody presentation with two handles (since the zero-sections of cotangent blocks may have  corners). 
But already a cotangent  building with three cotangent  blocks contains significantly more structure than a handlebody presentation with three handles, specifically in the region where the three blocks overlap. More precisely, in a 3-story building
 $(X_3=T^*M_3)\to (X_2=T^*M_2)\to (X_1=T^*M_1)$  
 the block $X_2$ 
 is attached to the contact boundary of $X_1$ along a neighborhood $U$
of  the Legendrian $\p M_2$ which can be identified with $\{p^2+z^2\leq  \eps\}\subset (J^1(\p M_2), dz-pdq).$
The boundary $S=\p U=\{p^2+z^2= \eps\}$ is contained in the contact boundary of the $2$-story building $(X_2\to X_1)$. The third block $X_3$ is attached to $ (X_2\to X_1)$ along a Legendrian $\p M_3\subset \p (X_2\to X_1)$. While in the handlebody picture it could be any Legendrian, the cotangent building condition requires  that $\p M_3$ intersects $S$ along a Legendrian in the contact manifold $\{z=0\}\subset S$, which is the dividing set of the convex hypersurface $S\subset \p (X_2\to X_1)$. When this occurs we say that the Legendrian $\partial M_3 \subset \partial (X_2 \to X_1)$ is {\em adapted} to the building structure $X_2 \to X_1$, see Figure \ref{adapted-legendrian}. 

  \begin{figure}[h!]
 \centering
  \includegraphics[scale=0.3]{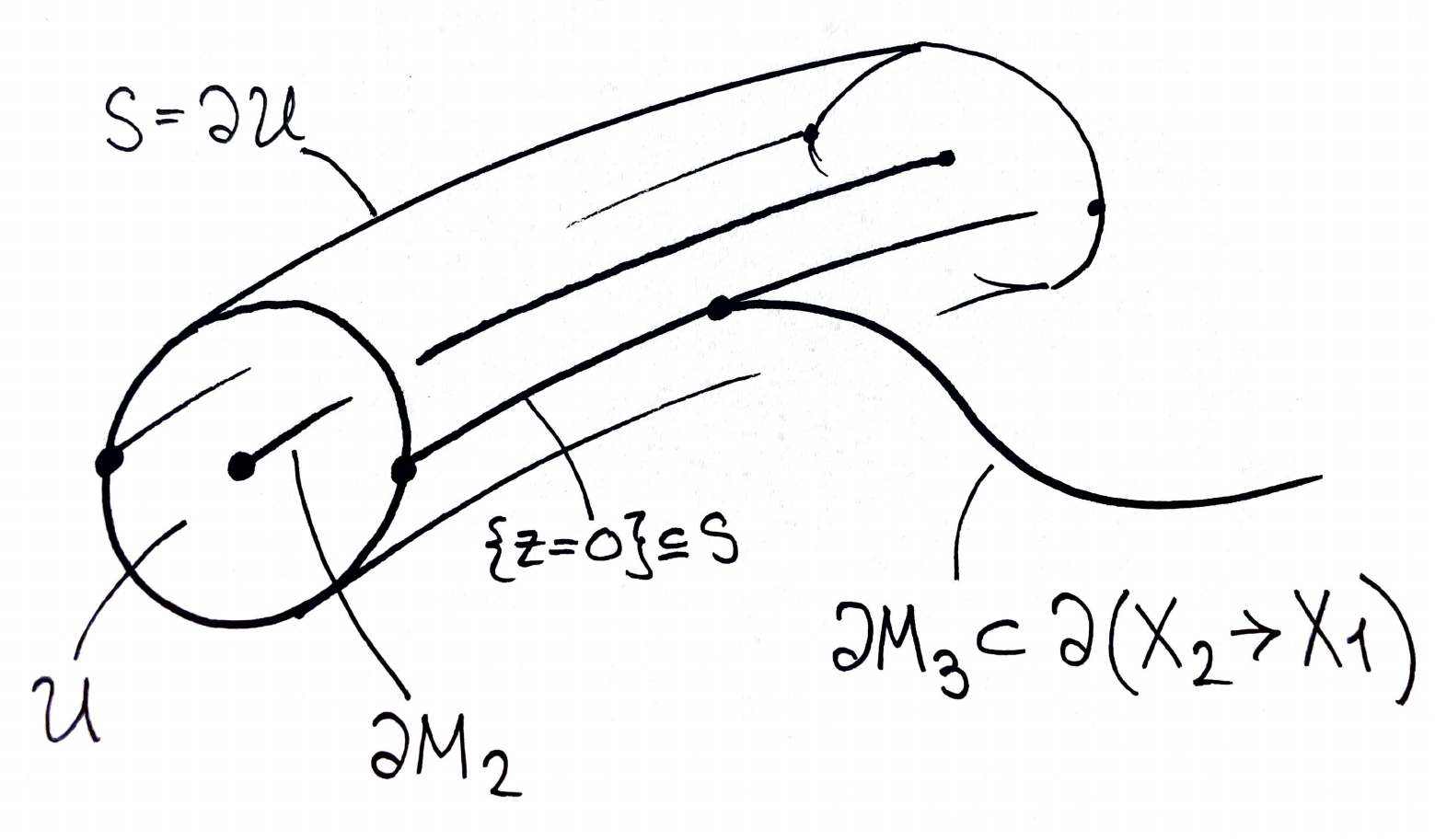}
 \caption{The geometry of an adapted Legendrian.}
 \label{adapted-legendrian}
 \end{figure}

     The main result of the article, Theorem  \ref{thm:WctoW}, establishes existence of a cotangent building structure on any Weinstein manifold.
 
 \begin{thm}
 \label{thm:intro WctoW}
  Any Weinstein manifold $(X, \lambda)$  
   is Weinstein homotopic to a Weinstein manifold
admitting a cotangent building structure 
   $$X=(X_k \to X_{k-1} \to \cdots \to X_1)
     $$
     where each  cotangent block $X_j$ is the cotangent bundle $T^*M_j$ of a disk with corners $M_j$ with some boundary faces stripped.
  \end{thm}
%
  
 


  As cotangent blocks are not Weinstein manifolds in the traditional sense (as they could be cotangent bundles of manifolds with corners), we work in a slightly more general context of {\em Weinstein blocks}, aka {\em  W-blocks}, and prove the above result in this more general setup.

  We originally arrived at cotangent buildings as a convenient formalism for our work on the arborealization program (simplification of singularities of  skeleta  of Weinstein manifolds).   For more on this application, we refer the reader to~\cite{AGEN21} where  cotangent buildings play an essential role in controlling the interaction between three or more strata of the skeleton.
  However, we believe  that  they may be useful for other applications and so have provided a  presentation in this paper independent of this specific application.

    \subsection{Summary of sections}
    In Section \ref{sec:L-man}, we review     the  notions  of Liouville and Weinstein manifolds. 
    
    In Section   \ref{sec:Wbc}, we introduce  Weinstein blocks, aka {\em W-blocks}, a variation on  Weinstein manifolds that provides useful flexibility  in two ways: W-blocks may have     boundaries and corners, and from the start,  W-blocks are defined to be germs at their skeleta.   
    
    In Section \ref{sec:operations}, we  discuss various operations on W-blocks (e.g. horizontal gluing and splitting, vertical gluing, and erasing faces and smoothing corners). We also discuss Legendrian submanifolds and  W-hypersurfaces  in  ideal boundaries of W-blocks.

In Section \ref{sec:W-bldg}, we introduce and analyze the notion of a  W-building, and in particular that of a cotangent building,  and    prove our main results concerning  existence  of cotangent building structures.


%

\subsection{Acknowledgements}

The first author thanks the Institute for Advanced Study, Princeton University, MIT and CRM Montreal  for providing a great research environment, and was supported by  NSF DMS grant \#1638352 and the Simons Foundation. The second author  thanks RIMS Kyoto and ITS ETH Zurich for their hospitality. The third author thanks SLMath for its hospitality  and acknowledges the support of NSF DMS grant \#2401178.

Finally, we are very grateful for the support of the American Institute of Mathematics, which hosted a workshop on the arborealization program in 2018 from which this project has greatly benefited.

\section{Liouville  and Weinstein manifolds}\label{sec:L-man} 
We first review  the  notions  of Liouville manifolds, domains and germs, then introduce Weinstein manifolds following  the modified formalism for potentials recently proposed by Kai Cieliebak in \cite{Ci23}. 
  
 
 \subsection{Liouville manifolds, domains  and  germs} 
 
 An {\em  exact symplectic manifold} is a symplectic manifold $(X,\om)$ with exact symplectic form together with the choice of a  primitive $\lambda$ for $\om$, i.e. a 1-form $\lambda$ such that $\om=d\lambda$. Such an $X$ cannot be closed (compact and boundaryless), and in general one needs to impose some kind of convexity or finiteness condition for $\lambda$ at infinity in order to develop a meaningful theory for $(X,\lambda)$. 
 
  \begin{definition}\label{def: Liouv man}  An exact symplectic manifold $(X,\lambda)$ is a {\em finite type Liouville manifold}, or a {\em Liouville manifold with cylindrical end}, if   
  \begin{enumerate}
  \item   the  field $Z$ which is $\omega$-dual to $\lambda$ is complete for $t\to+\infty$,
  \item   there exists a compact domain $W\subset X$ such that $Z$ is outward  transverse  to the boundary $\p W$,
  \item  the union of forward trajectories of $Z$ starting at $\p W$ is equal to $X\setminus\Int W$.
  \end{enumerate} 
  \end{definition}
  
  \begin{remark} The above condition automatically  implies completeness of $Z$ as $t\to-\infty$. \end{remark}

The $1$-form $\lambda$ is called a {\em Liouville form}, and the $\om$-dual vector field $Z$, i.e.~determined by $\iota_Z\om=\lambda$, is called the {\em Liouville field}. An embedding between exact symplectic manifolds which relates the chosen primitives by pullback is called a {\em Liouville embedding}. 
 It will be convenient for us to abuse  terminology and use the adjective {\em Liouville} to modify the nouns {\em form, field} and {\em embedding} even when $(X,\lambda)$ is not a Liouville manifold, but merely an exact symplectic manifold. All Liouville manifolds considered in this paper will be of finite type, and hence we will usually abbreviate them to just {\em Liouville manifolds}. 

\begin{definition}\label{def: Liouv dom}
An exact symplectic compact manifold with boundary $(W,\lambda)$ is a {\em Liouville domain} if the field $Z$ which is $\omega$-dual to $\lambda$ is transverse to $\partial W$ and points outside of $W$. 
\end{definition}

\begin{remark} Equivalently, we may ask that the Liouville form $\lambda$ restricts on $\partial W$ to a contact form which is compatible with the orientation $\partial W$ inherits as the boundary of $W$, which is oriented by the top exterior power of $d \lambda$. \end{remark}

\begin{definition}\label{def: dom} Let $(X,\lambda)$ be a Liouville manifold. Any domain $W\subset X$ as in Definition \ref{def: Liouv man} is called a {\em defining Liouville domain} for $(X,\lambda)$. The pair $(W,\lambda|_W)$ is a Liouville domain in the sense of Definition \ref{def: Liouv dom}.  
\end{definition}

Any Liouville domain $(W,\lambda)$ can be completed to a Liouville manifold $$\wh W = W \cup_{\partial W} (\partial W \times [0,\infty) )$$ where $\lambda$ is extended by $e^t \left(\lambda|_{\partial W}\right)$ on $\partial W \times [0,\infty)$, and the smooth gluing is achieved by the collar $\partial W \times (-\varepsilon, 0]$ for $\partial W$ in $W$ in which $Z=\partial / \partial t$, so for each $x \in \partial W$ the curve $t \mapsto (x,t)$ is a trajectory of the Liouville field. Note that by construction the Liouville domain $W$ we started with is a defining domain for the Liouville manifold $\wh W$. 

If a Liouville domain $W$ is a defining domain for a Liouville manifold $X$, there is a canonical Liouville isomorphism $\wh W \to X$ which is the identity on $W$, by sending Liouville trajectories to Liouville trajectories.

The subset  $\bigcap_{t>0}Z^{-t}(W) \subset X$ is independent of a choice of the  defining Liouville domain, and it is called the  {\em skeleton} of  the Liouville manifold $X$ (and  of the Liouville domain $W$). We will denote it by  $\Skel(X,\lambda)$ (or $\Skel(W,\lambda)$).
  Equivalenty, $\Skel(X,\lambda)$ is the attractor of the 
positive flow of the contracting field $-Z$ for any defining   Liouville domain $W$. 
For instance, for the standard Liouville structure $ pdq $ on the cotangent bundle $T^*M$,  the skeleton $\Skel(T^*M,pdq)$ is the $0$-section $M\subset T^*M$.

When $W_1,W_2 $ are two defining domains for a Liouville manifold $(X,\lambda)$ and $W_1 \subset W_2$, the flow of $Z$ gives a trivialization of the collar $W_2 \setminus \Int W_1$ and also a canonical Liouville isomorphism $\wh W_1 \to \wh W_2$. 
In this case, we also say $W_1$ is a defining domain for the Liouville domain $(W_2, \lambda|_{W_2})$. 
Note that given any two defining domains $W_1$ and $W_2$ for $(X,\lambda)$, there exists a defining domain $W_3$ for $(X,\lambda)$ such that $W_3 \subset W_1\cap W_2$.  Hence we get canonical Liouville isomorphisms, fixed on $W_3$, fitting in a strictly commutative diagram
\[ \xymatrix{ \wh W_3 \ar[d] \ar[r] & \wh W_1 \ar[d] \\  \wh W_2 \ar[r] & X} \]


\begin{definition}
Fix a smooth manifold $X$. A {\em Liouville germ} $(\sX,\lambda)$ in $X$ consists of an equivalence class of Liouville domains $(W,\lambda)$ with $W \subset X$  where $(W_1,\lambda_1) \sim (W_2,\lambda_2)$ if there exists a defining domain $W \subset W_1 \cap W_2$ for both $(W_1,\lambda_1)$ and $(W_2,\lambda_2)$ such that $\lambda_1|_{W}=\lambda_2|_{W}$. Any representative $(W,\lambda)$ of the equivalence class will be called a defining domain for the Liouville germ $(\sX,\lambda)$. 
\end{definition} 

From the above discussion it follows that the equivalence relation is generated by the identification of any two Liouville domains $(W_1,\lambda_1)$ and $(W_2,\lambda_2)$ in $X$ such that $W_1 \subset W_2$ and $\lambda_2 |_{W_1} = \lambda_1$.

An isomorphism between Liouville germs $(\sX_1,\lambda_1) \to ( \sX_2, \lambda_2)$ consists of the equivalence class of a Liouville isomorphism $f:(W_1,\lambda_1) \to ( W_2,\lambda_2)$ between defining domains $W_i$ for $(\sX_i,\lambda_i)$, $i=1,2$, where two such maps are equivalent if they agree when restricted to some smaller defining domain of $\sX_1$. Isomorphisms between Liouville germs will always be understood as equivalence classes up to this relation.

Any Liouville manifold $(X,\lambda)$ determines a Liouville germ $(\sX,\lambda)$ consisting of the equivalence class of any defining domain $W \subset X$. Similarly, any Liouville domain $(W,\lambda)$ determines a Liouville germ. Note that any two defining domains for the same Liouville manifold have canonically isomorphic Liouville germs and the germ of a defining domain $W$ for a Liouville manifold $X$ is canonically isomorphic to the Liouville germ of the Liouville manifold $X$ itself. We will henceforth gloss over such technical distinctions and will implicitly identify canonically isomorphic germs.

We will typically indicate the passage from manifold to germ by the change in font $X\rightsquigarrow \sX$ and will omit the background manifold from the notation altogether. In particular, the  germ of a cotangent bundle $(T^*M, pdq)$ along its $0$-section will be denoted by   $(\sT^*M, pdq)$. 

Note that any two defining domains for a Liouville germ $(\sX,\lambda)$ share the same skeleton, which we may therefore denote by $\Skel(\sX,\lambda)$. We may think of a Liouville germ $(\sX,\lambda)$ as the germ of a Liouville manifold at its skeleton, and
 two Liouville germs $(\sX_1,\lambda_1)$ and $(\sX_2,\lambda_2)$ are isomorphic if there exists the germ of a diffeomorphism $\Op(K_1)   \to \Op(K_2)$ of the background manifolds $X_i$ at their skeleta $K_i = \Skel(\sX_i,\lambda_i) \subset X_i$
which sends one equivalence class to the other. 
Constructions which are carried out at the level of Liouville germs are agnostic about what happens outside of an arbitrarily small neighborhood of the skeleton. This additional flexibility allows for some statements and conclusions to be formulated more efficiently. 

  \begin{figure}[h!]
 \centering
  \includegraphics[scale=0.3]{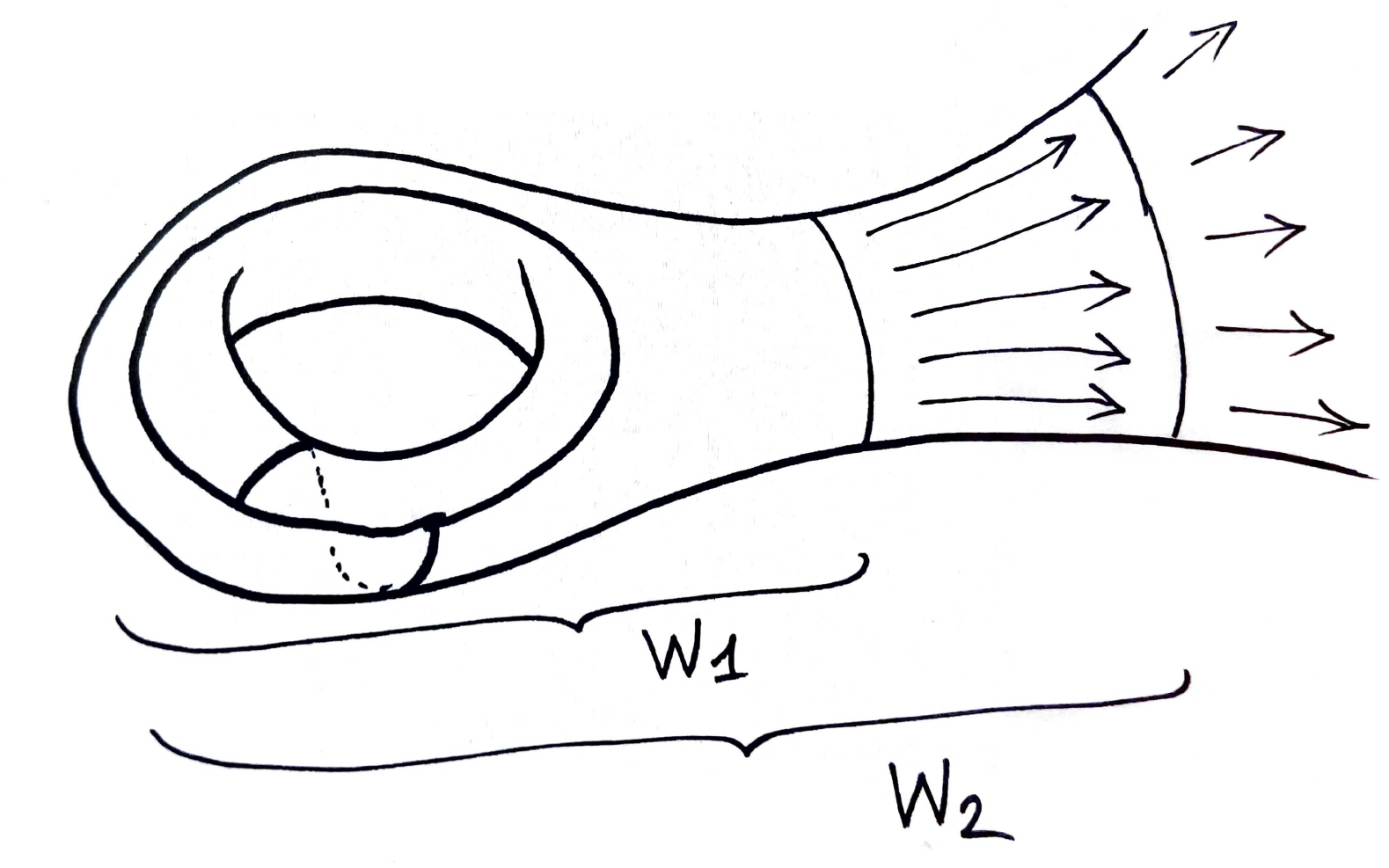}
 \caption{Two defining domains for the same Liouville manifold.}
 \label{defining-domains}
 \end{figure}

\begin{definition} The {\em ideal boundary}  of a finite type Liouville manifold $(X,\lambda)$ is defined to be $\partial_{\infty}X = \big(X \setminus \Skel(X, \lambda) \big) / \R$, where the $\R$-action is given by the  flow of $Z$.   
\end{definition}

We denote by $\pi_\infty : X\setminus \Skel(X,\lambda)\to\p_\infty X$  the projection along the trajectories of  the vector field $Z$. Since $\lambda(Z)=0$, the form $\lambda$ descends to  $\partial_\infty X$  as  a co-oriented contact structure $\xi_\infty$, while 
$(X \setminus \Skel(X, \lambda),\lambda)$ is canonically isomorphic to the {\em symplectization} $S(\p_\infty X,\xi_\infty)$. We recall that the symplectization  $S(\p_\infty X,\xi_\infty)$ is defined as the positive conormal   $N_+( \xi_\infty)\subset T^*(\p_\infty X)$ of the co-oriented hyperplane  sub-bundle $\xi_\infty\subset T(\p_\infty X)$, endowed with the Liouville form $pdq|_{N_+(\p_\infty X)}$. A choice of a defining domain for $X$ is equivalent to the choice of a contact form  $\alpha$ for $\xi_\infty$ and hence yields an identification of $(X \setminus \Skel(X, \lambda)$ with $(\p_\infty X\times\R, e^s\alpha)$. 
   
 In the other direction, the {\em contactization} of a Liouville  manifold $(N, \lambda)$ is the manifold $M \times \R$ equipped with the contact form $\lambda-du$, where $u \in \R$. For example, the contactization of the cotangent bundle $T^*M$ is the 1-jet bundle $J^1M$ with its canonical contact structure. 
 
  Note that the ideal boundary is determined by the germ $(\sX,\lambda)$; for any defining domain $W$ we have a canonical identification between the space of Liouville trajectories in $W \setminus \Skel (W,\lambda) $ and $\p_\infty X$, as well as a canonical contactomorphism $(\partial W ,\ker(\lambda|_{\p W} ) ) \to (\p_\infty X, \xi_\infty)$. We will therefore sometimes write
 $\p_\infty\sX$ instead of $\p_\infty X$.  Note that the contact form $\alpha$ on $\p_\infty X$ corresponding to $\lambda|_{\partial W}$ is not well-defined at the level of germs, since it depends on the representative $W$.

  \begin{figure}[h!]
 \centering
  \includegraphics[scale=0.3]{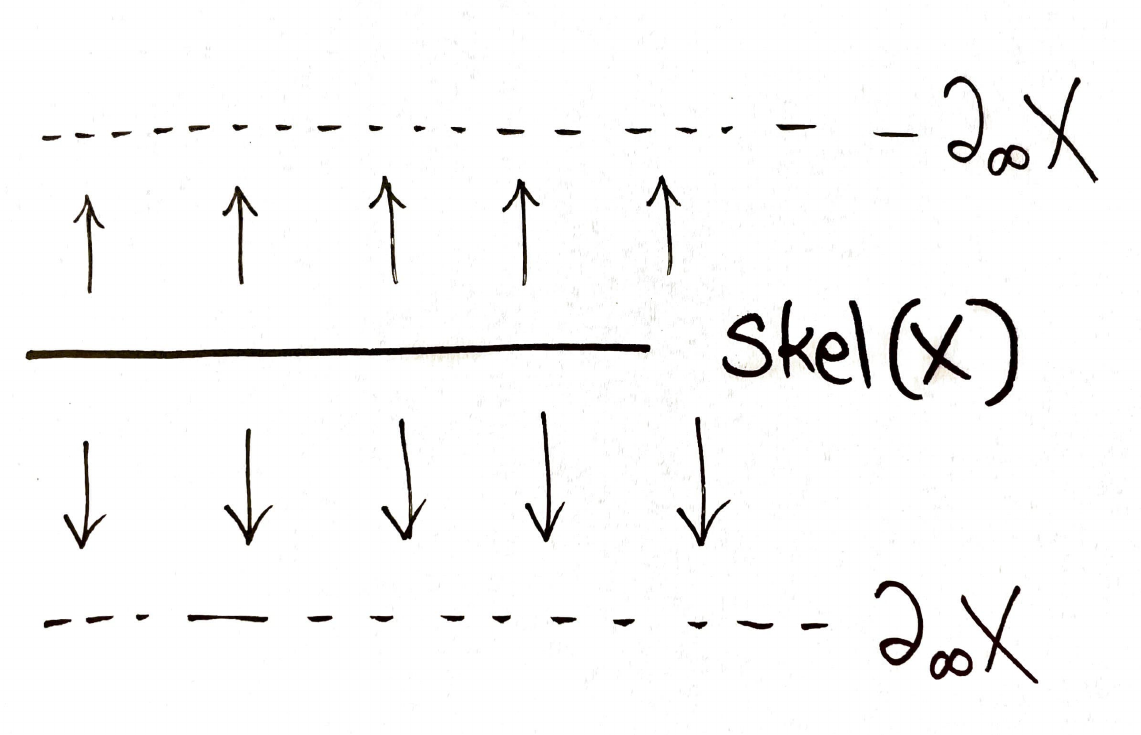}
 \caption{Ideal boundary.}
 \label{ideal-boundary}
 \end{figure}


\subsection{Weinstein manifolds after Cieliebak}\label{sec:W-man}


The notion of a {\em Weinstein manifold} was first  introduced in \cite{EG91},  building on \cite{E90} and \cite{W91}. 

Traditionally, a Weinstein manifold is a Liouville manifold $(X,\lambda)$ for which the Liouville field $Z$ admits a Lyapunov function $\phi:X\to\R$.
Recall that a function $\phi$ is called {\em Lyapunov} for a vector field $Z$ if   
 $d\phi(Z)\geq \delta (\|Z\|^2+ \|d \phi \|^2)$ for  some positive function $\delta >0$ and a choice of a  Riemannian metric on $X$. A problem with this definition as stated is that  it is unknown whether a  Liouville field admitting a Lyapunov function can be perturbed to  a generic Liouville field,   still admitting a Lyapunov function.  Hence, one  typically also  assumes  that the  zeroes of $Z$  are suitably generic  (e.g. they are non-degenerate, or birth-death, or Morse-Bott type, etc., see \cite{CE12}). 
 
 In this paper, we adopt  a modified definition, which was recently proposed by  K. Cieliebak in \cite{Ci23}, that does not encounter  this difficulty.

\subsubsection{Near-metrics, near-gradients, and potentials}\label{sec:near-grad}
For a bilinear  form $g$, let us denote by $g^{\sym}$ its symmetrization $g^{\sym}(u,v):=\frac{1}{2}(g(u,v)+g(v,u)).$ We call $g$ positive if $g^{\sym}$ is positive definite. 

 \begin{definition}  Let $X$ be a smooth manifold.
\begin{enumerate} 
\item  A field of positive  bilinear forms  $g$ on $X$ is  called a {\em near-metric}.
 \item Given a   near-metric  $g$ on $X$, a vector field $Z$ is called a {\em near-gradient} of a function $\phi:X\to\R$ if we have $d\phi=\iota_Z g$.
\item  Conversely, a function $\phi$ is called a    {\em potential} of a vector field $Z$ if there exists a near-metric $g$ such that  $Z$ is the near-gradient of $\phi$ with respect to $g$.
\end{enumerate}
\end{definition} 
\begin{remark}\label{rem:pot-inv} 
Let $g$ be a near metric on $X$, $\phi:X \to \bR$ a function and $f:\wt X\to  X$ a diffeomorphism. If a vector field $Z$ on $X$ is a near-gradient of $\phi$ with respect to $g$, then the vector field 
$f^*Z=df^{-1}(Z)$ on $\wt X$ is a near-gradient of the function $\phi\circ f$ with respect to $f^*g$.
\end{remark}

  \begin{lemma}\label{lm:space-of-pot}  The  space of potentials of a vector field $Z$ is either    empty or contractible.
\end{lemma}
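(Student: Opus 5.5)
The plan is to show that the space of potentials of $Z$ is convex. A convex subset of the vector space $C^\infty(X)$ is contractible as soon as it is nonempty, via the straight-line homotopy to any chosen point.

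First I would characterize potentials pointwise. Suppose $\phi$ is a potential, so $d\phi=\iota_Z g$ for some near-metric $g$. Then $d\phi(Z)=g(Z,Z)=g^{\sym}(Z,Z)\geq 0$, with equality exactly where $Z=0$. Also, at a zero of $Z$ we get $d\phi=\iota_Z g=0$. So a potential satisfies $d\phi(Z)>0$ off $\mathrm{Zero}(Z)$ and $d\phi=0$ on $\mathrm{Zero}(Z)$. These conditions are obviously convex. However, they are not obviously sufficient: near degenerate zeros one needs the quantitative control supplied by a near-metric, not just the sign condition.

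To avoid that issue I would argue convexity directly. Let $\phi_0,\phi_1$ be potentials with near-metrics $g_0,g_1$, and set $\phi_t=(1-t)\phi_0+t\phi_1$ and $g_t=(1-t)g_0+tg_1$. Then
$$d\phi_t=(1-t)\iota_Zg_0+t\,\iota_Zg_1=\iota_Z g_t,$$
and $g_t^{\sym}=(1-t)g_0^{\sym}+tg_1^{\sym}$ is positive definite, since positive definite forms form a convex cone. So $g_t$ is a near-metric and $\phi_t$ is a potential of $Z$. This is where the choice of non-symmetric bilinear forms in the definition pays off: the relation $d\phi=\iota_Zg$ is linear in the pair $(\phi,g)$, and the positivity condition is convex.

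The space of potentials is therefore convex. If it is nonempty, fix $\phi_0$ in it; the map $H(\phi,s)=(1-s)\phi+s\phi_0$ stays in the space and contracts it to $\phi_0$. For contractibility to mean something, the space must carry a topology, presumably the $C^\infty$ (Whitney or weak) topology. Affine operations are continuous for these topologies, so $H$ is continuous and the space is contractible.

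The only potential subtlety is this topological framing. In the Whitney topology, $(1-s)\phi+s\phi_0$ is continuous in $(\phi,s)$, since the difference $\phi-\phi_0$ is fixed for each $\phi$ and scalar multiplication along a fixed vector is continuous. So I expect no real obstacle; the main content is the observation that $(\phi,g)$ enters linearly.
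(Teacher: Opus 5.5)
Your argument is the same as the paper's: interpolate the pairs $(\phi_t,g_t)=((1-t)\phi_0+t\phi_1,(1-t)g_0+tg_1)$, using that $d\phi=\iota_Z g$ is linear in $(\phi,g)$ and that positivity of $g^{\sym}$ is preserved under convex combinations. One correction to your topological aside: on noncompact $X$, the map $s\mapsto\phi+s(\phi_0-\phi)$ is continuous in the weak (compact-open) $C^\infty$ topology, but in the strong Whitney topology it is continuous only when $\phi_0-\phi$ has compact support, so the contraction, and hence the lemma, should be read in the weak topology.
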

\begin{proof}
Indeed,   if $Z$ is the near $g_0$-gradient of  $\phi_0$ and the near $g_1$-gradient of  $\phi_1$  then it is the near $g_t$-gradient of $\phi_t$, $t\in[0,1]$, where  $g_t:=(1-t)g_0+tg_1$   and 
$\phi_t=(1-t)\phi_0+t\phi_1$.
  \end{proof}

\begin{lemma} \label{lm:pot-Lyap} A potential of a vector-field $Z$ is a Lyapunov function of $Z$.
\end{lemma}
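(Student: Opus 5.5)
The plan is to take the Riemannian metric in the Lyapunov condition to be the symmetrization of the near-metric itself, and then to bound $\|d\phi\|$ pointwise by $\|Z\|$. Suppose $\phi$ is a potential of $Z$, so that there is a near-metric $g$ with $d\phi=\iota_Z g$. Set $h:=g^{\sym}$; it is a genuine Riemannian metric because $g$ is positive. The Lyapunov inequality $d\phi(Z)\geq \delta(\|Z\|^2+\|d\phi\|^2)$ will be verified with all norms taken with respect to $h$.

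The first step is the identity
$$d\phi(Z)=g(Z,Z)=g^{\sym}(Z,Z)=\|Z\|_h^2 .$$
It holds because the antisymmetric part of $g$ vanishes on the diagonal. This already gives the $\|Z\|^2$ part of the inequality with constant $1$.

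The second step handles the $\|d\phi\|^2$ term. For each $x\in X$, consider the linear map $A_x:T_xX\to T_x^*X$, $v\mapsto g_x(v,\cdot)$, and let $c(x)$ be its operator norm computed with $h_x$ on both sides. Since $g$ and $h$ are smooth tensor fields, $c$ is a continuous, nonnegative function on $X$. Then pointwise
$$\|d\phi\|_h=\|A_x Z\|_h\le c(x)\,\|Z\|_h .$$
Combining the two steps gives
$$\|Z\|_h^2+\|d\phi\|_h^2\le (1+c^2)\,\|Z\|_h^2=(1+c^2)\,d\phi(Z).$$
Hence the Lyapunov inequality holds with $\delta:=1/(1+c^2)$, which is a positive continuous function.

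The only point needing care is regularity. If $\delta$ is required to be smooth, replace it by any smooth function $0<\tilde\delta\le\delta$, which exists by a standard partition of unity argument; the inequality only improves. The same remark covers the case where the Lyapunov condition is meant with respect to a different, fixed Riemannian metric $h'$. On a manifold, $h$ and $h'$ are pointwise comparable, with comparison constants that are continuous positive functions, and these constants can be absorbed into $\delta$. I expect no genuine obstacle: the argument is pointwise linear algebra, and the only subtlety is keeping $\delta$ a positive function rather than a constant when $X$ is noncompact.
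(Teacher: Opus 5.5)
Your proof is correct and follows the paper's argument. Like the paper, you take $g^{\sym}$ as the background metric, note $d\phi(Z)=g(Z,Z)=\|Z\|^2$, and bound $\|d\phi\|$ by a multiple of $\|Z\|$ by comparing $I_g$ with $I_{g^{\sym}}$. Where the paper uses a single constant $C>0$, you allow a continuous positive function $c(x)$; this is the more accurate version on noncompact $X$, and the definition permits it since $\delta$ is only required to be a positive function.
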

\begin{proof} Let us choose $g^{\sym}$ as the background Riemannian metric.
We have $|d\phi(Z)|=g(Z,Z)=\|Z\|^2$. Let $I_g, I_{g^{\sym}}$ be  isomorphisms $TX\to T^*X$ defined by the formulas $V \mapsto\iota_V g$,  $V\mapsto\iota_V g^{\sym} $ respectively.
Then $\|d\phi\|=\|I_{g^{\sym}}^{-1}(d\phi)\|=\| I_{g^{\sym}}^{-1}(I_g(Z))\|\leq C\|Z\|$ for some $C>0$, and hence, $|d\phi(Z)|\geq \delta(\|Z\|^2+\|d\phi\|^2)$.
\end{proof}

 Away from  a neighborhood of   the zeros of $Z$,  any Lyapunov function $\phi$  is a potential, and in fact, even the gradient for a genuine metric  $g$  such that  $Z$ is $g$-orthogonal to the level sets $\{\phi=a\}$ and $\|Z\|^2=d\phi(Z)$. Hence, a potential can be equivalently defined as a function $\phi:X\to\R$ which is a potential on a neighborhood of the set of zeros of $Z$ and which satisfies the condition $d\phi(Z)>0$ elsewhere.

The  converse  to the statement  of Lemma \ref{lm:pot-Lyap}   holds
  for  the most important Lyapunov functions for our concerns.

Recall that a function $\phi$ on a manifold of dimension $m$ is called: 
\begin{enumerate}
\item
   {\em Morse} if all its critical points are non-degenerate, 
   so near each critical point the function $\phi$ is equivalent to $$\sum\limits_{j=1}^{m} \pm x_j^2;$$ 
  
   \item
  {\em Morse-Bott}  if near each critical point the function $\phi$ is equivalent to $$\sum\limits_{j=1}^{k} \pm x_j^2$$ for 
   some $1\leq k\leq m$;
   \item
   {\em generalized Morse} or  {\em Igusa}   if all  its  critical points are either non-degenerate or embryos,  so near each critical point the function $\phi$ is equivalent to $$\sum\limits_{j=1}^{m} \pm x_j^2 \quad \text{or} \quad \sum\limits_{j = 1}^{m-1} \pm x_j^2+x_m^3.$$
   
 \end{enumerate}
 
 Note that  the  Morse condition for a Lyapunov function $\phi$ is equivalent to the condition that  each zero $p$ of $Z$ is isolated, non-degenerate and hyperbolic (i.e.  $d_pZ$ has no pure imaginary eigenvalues).   The embryo condition at a point $p$ implies that  non-zero eigenvalues of $d_pZ$  have non-vanishing  real part,  while $\dim(K:= \ker d_pZ)=1$  and the second differential  $d^2_pZ|_K$ does not vanish.
 We note that  $d^2_pZ|_K$ is defined up to a non-zero factor.

\begin{lemma}[Cieliebak, \cite{Ci23}] \label{lm:lm:Lyap-pot}
If  $\phi$ is  a Lyapunov function for $Z$  and $\phi$ is Morse, Morse-Bott,   or  Igusa, then there exists a near-metric $g$ such that $Z$ is the near-gradient for  $\phi$.
\end{lemma}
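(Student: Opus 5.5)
Away from the zeros of $Z$ the conclusion is already available: as noted just before the statement, where $d\phi(Z)>0$ one can write down a genuine metric $g$ with $Z$ orthogonal to the level sets of $\phi$ and $\|Z\|^2=d\phi(Z)$, so that $\iota_Z g = d\phi$. The plan is therefore to build a near-metric $g_p$ with $\iota_Z g_p = d\phi$ on a neighborhood $U_p$ of each zero $p$ (or each component of the zero set in the Morse-Bott case), and then glue with a partition of unity. Gluing is harmless because both conditions are convex in $g$: if $\iota_Z g_i = d\phi$ and $g_i^{\sym}>0$ for each $i$, the same holds for $\sum \rho_i g_i$. This is exactly the convexity behind Lemma~\ref{lm:space-of-pot}. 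So the whole argument is local near the zeros of $Z$.

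\textbf{Local construction.} I would use the following algebraic observation. Suppose $Z$ vanishes at $p$, and suppose $d\phi = \iota_Z g$ can be arranged with $g$ a smooth field of bilinear forms. It suffices to find a matrix-valued function $G(x)$ with $d\phi(x) = G(x)^T Z(x)$ and $G^{\sym}(p)$ positive definite; positivity then persists nearby. First I would factor both sides. By Hadamard's lemma, $Z(x)=A(x)x$ and $d\phi(x)=H(x)x$ with $A(0)=d_pZ$ and $H(0)=\mathrm{Hess}_p\phi$.

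\textbf{Morse case.} The Lyapunov inequality $d\phi(Z)\ge\delta(\|Z\|^2+\|d\phi\|^2)$, to second order, gives $x^T H(0)^T A(0) x \ge \delta'|x|^2$. Hence $A(0)$ and $H(0)$ are invertible, and $S:=H(0)A(0)^{-1}$ satisfies $y^T S y>0$ for $y=A(0)x$, so $S^{\sym}>0$. Define $G(x):=H(x)A(x)^{-1}$ near $p$. This is smooth, and $G(x)Z(x)=H(x)x=d\phi(x)$; I write $g(u,v)=\langle G u,v\rangle$, transposed appropriately. Then $G(0)=S$ is positive, and we are done.

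\textbf{Morse-Bott case.} In coordinates $(x,y)$ with the critical manifold $\{x=0\}$, I would do the same with parameters $y$. Here $Z=A(x,y)x$ and $d\phi=H(x,y)x$ are both in $dx$-directions. The Lyapunov condition forces $A$ to be invertible on the normal block and forces $Z$ to vanish exactly on the critical set. Then I would set $G=H A^{-1}$ on the normal block and the identity on the tangent block; off-diagonal terms can be chosen to vanish since both sides have no $y$-components to leading order. One caveat: if $Z$ has a tangential component of order $O(|x|)$, I would absorb it into the factorization $Z=A(x,y)x$ with $A$ rectangular and solve again.

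\textbf{Main obstacle: the embryo case.} At an embryo, $d_pZ$ has a one-dimensional kernel, so $A(0)$ is not invertible and the naive inverse fails. Along the kernel direction $x_m$, the restriction has $\phi\sim x_m^3$ and $Z_m\sim c\,x_m^2$ with $c>0$, where the sign is forced by Lyapunov. Thus $d\phi/Z$ is $3x_m^2/(c x_m^2)$, which is smooth and positive. My first attempt would be to split coordinates via the center manifold (or the normal form) into a hyperbolic block, where the Morse argument applies, and the one-dimensional kernel direction. There I would use a second-order Hadamard factorization $Z_m = x_m^2 a(x) + (\text{terms in the hyperbolic variables})$ and $\partial_m\phi = x_m^2 b(x)+\dots$, and check that the mixed terms can be absorbed into off-diagonal entries of $G$ that are small near $p$. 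The delicate point is smoothness of $G$ across $x_m=0$, which needs exactly the nondegeneracy $d^2_pZ|_K\ne0$.
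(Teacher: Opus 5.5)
Your reduction to a local problem is sound: both conditions are convex in $g$, and off $Z^{-1}(0)=\mathrm{Crit}(\phi)$ the Lyapunov inequality gives $d\phi(Z)>0$. Your Morse case $G=HA^{-1}$ is also correct. The paper itself gives no proof here and only refers to Cieliebak, so the question is whether your argument stands on its own.

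\textbf{The embryo case.} The gap is the embryo case, which is the real content of the lemma beyond the Morse case. You leave it as a plan, and the mechanism you propose is wrong: the off-diagonal entries of $G$ at $p$ are in general not small. For example, take $\phi=x_1^2+x_2^3$ and $Z=x_1\partial_1+(x_2^2+x_1)\partial_2$. This is a Lyapunov field, since $d\phi(Z)=2x_1^2+3x_1x_2^2+3x_2^4$ is positive definite in $(x_1,x_2^2)$. Evaluating $GZ=\nabla\phi$ along both axes and letting $x\to0$ forces $G(0)=\left(\begin{smallmatrix}2&0\\-3&3\end{smallmatrix}\right)$.

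The missing idea is to apply Hadamard in the weighted variable $u=(x',x_m^2)$ rather than in $x$. In the normal form, $\nabla\phi=Nu$ with $N=\mathrm{diag}(\pm2,\dots,\pm2,3)$. The steps are:
\begin{enumerate}
\item Restricting $\delta|Z|^2\le d\phi(Z)$ to $x'=0$ gives $|Z(0,x_m)|\le Cx_m^2$, so $Z(0,x_m)=x_m^2\beta(x_m)$.
\item Hadamard in $x'$ then yields $Z=M(x)u$ with $M$ smooth, and the Lyapunov inequality becomes $u^TNM(x)u\ge c|u|^2$.
\item Substitute $x=(tv',\sqrt{tv_m})$, for which $u=tv$, and let $t\to0$. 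This gives $v^TNM(0)v\ge c|v|^2$ whenever $v_m\ge0$, hence for all $v$, because a quadratic form is even.
\item Therefore $M(0)$ is invertible, $G:=NM(x)^{-1}$ is smooth near $p$ with $GZ=\nabla\phi$, and $w^TG(0)w=v^TNM(0)v>0$ for $w=M(0)v$.
\end{enumerate}
The nondegeneracy you were concerned about is thus a consequence of the Lyapunov inequality, not an extra input. You should also avoid center manifolds, which are only finitely differentiable.

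\textbf{The Morse--Bott case.} Your Morse--Bott paragraph also contains an error. $Z$ may have components tangent to the critical manifold that are linear in $x$. For instance, $\phi=x^2$, $Z=x\partial_x+x\partial_y$ is Lyapunov. Here the $y$-row of $GZ=(\nabla_x\phi,0)$ forces $G_{yx}=-G_{yy}\neq0$, so the off-diagonal block cannot vanish. Saying ``solve again with $A$ rectangular'' does not explain how to keep $G^{\sym}$ positive.

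Here is a fix. In Morse--Bott coordinates write $Z=(A_xx,\,A_yx)$ and $\nabla\phi=(Nx,0)$. The same scaling argument shows that $NA_x(0,y)$ has positive definite symmetric part. Then
\[
G=\begin{pmatrix} NA_x^{-1} & 0\\ -\mu A_yA_x^{-1} & \mu I\end{pmatrix}
\]
satisfies $GZ=\nabla\phi$. Its symmetric part is positive definite for $\mu>0$ small, by a Schur complement estimate.
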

 This statement
 is proved in \cite{Ci23} for Morse and Igusa. The Morse-Bott case is similar, adding parameters.

\begin{example}\label{ex:cot-bundle-potential}
Let $X=T^*M$ be a cotangent bundle, $Z=p\frac{\p}{\p p}$, and $Q:T^*M\to\R$ be any Riemannian metric on the bundle $T^*M\to M$. Then $Q$ is a potential for $Z$.
\end{example}

\subsubsection{Weinstein manifolds}

\begin{definition}
We will say a    (finite type) Liouville manifold  $(X,\lambda)$    is  {\em Weinstein} if 
  the corresponding Liouville field
$Z$ admits a potential function $\phi$, in other words, there exists a near-metric $g$ such that  $\iota_Z g = d \phi$. \end{definition}


\begin{lemma}[Cieliebak, \cite{Ci23}]\label{lm:Ciel-perturb}The projection $(\lambda,g,\phi)\mapsto\phi$ is a microfibration. Concretely, suppose we are given any family
$(\lambda_s,g_s,\phi_s)$,  $s\in K$, parameterized by a   compact set $K$, such that $\phi_s$ is the $g_s$-potential for the Lyapunov field $Z_s$ corresponding to $\lambda_s$.
Then given a homotopy $\phi_{s,t}, \; s\in K,\; t\in[0,1]$, $\phi_{s,0}=\phi_s$, there exists $\eps>0$ and a covering homotopy $(\lambda_{s,t},g_{s,t})$, $\;s\in K,\;t\in[0,\eps]$,  such that $\phi_{s,t}$ is the $g_{s,t}$-potential for the Lyapunov field $Z_{s,t}$ corresponding to  $\lambda_{s,t}$,  $s\in K,\,t
\in[0,\eps]$.
\end{lemma}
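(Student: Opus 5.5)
The plan is to keep the data $(\lambda_s,g_s)$ essentially fixed and to absorb the change of $\phi$ into a correction of the Liouville form by an exact term. In that case $\omega_{s,t}=d\lambda_{s,t}=\omega_s$ stays symplectic automatically, and only the potential condition has to be arranged. I have not checked that this correction can be found with $t$-independent $\eps$; that is the main obstacle (the third step below).

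\textbf{Reduction to a neighbourhood of the zeros.} By the remark after Lemma~\ref{lm:pot-Lyap}, $\phi$ is a potential of $Z$ as soon as it is a potential near the zero set of $Z$ and $d\phi(Z)>0$ elsewhere. The inequality $d\phi_{s,t}(Z_{s,t})>0$ is open. So on the complement of a fixed neighbourhood $U$ of $\{Z_s=0\}$ it persists for $t\in[0,\eps]$ and $s\in K$, provided $Z_{s,t}$ stays $C^0$-close to $Z_s$ and $\phi_{s,t}$ stays $C^1$-close to $\phi_s$. Compactness of $K$ and of a defining domain give the uniform choice of $\eps$. The real work is therefore inside $U$, where the zeros of the field must follow the critical points of $\phi_{s,t}$, which may move or undergo births and deaths.

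\textbf{Making $Z$ follow $\phi$.} Put $\psi_{s,t}=\phi_{s,t}-\phi_s$, which is $C^1$-small for small $t$, and let $G_s\colon TX\to T^*X$ be $V\mapsto\iota_V g_s$. The only vector field that is a near-gradient of $\phi_{s,t}$ for the \emph{same} near-metric is
\[
Y_{s,t}=G_s^{-1}d\phi_{s,t}=Z_s+G_s^{-1}d\psi_{s,t}.
\]
In general $Y_{s,t}$ is not Liouville. I would exploit two freedoms to fix this:
\begin{enumerate}
\item the near-metric may be changed by adding an arbitrary small skew-symmetric form $\sigma$, which leaves $g^{\sym}$ and hence positivity untouched;
\item it may also be changed by small symmetric perturbations.
\end{enumerate}
Concretely, I look for $\lambda_{s,t}=\lambda_s+dh_{s,t}$, cut off outside a compact set. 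Its Liouville field is $Z_s+X_{h_{s,t}}$, where $X_h$ is the $\omega_s$-dual of $dh$. The goal is then to find $g_{s,t}=g_s+\sigma_{s,t}+\rho_{s,t}$, with $\sigma_{s,t}$ skew and $\rho_{s,t}$ small symmetric, solving
\[
\iota_{Z_s+X_{h_{s,t}}}\,g_{s,t}=d\phi_{s,t}.
\]
At a point where the vector $W=Z_s+X_{h_{s,t}}$ is nonzero, the linear equation $\iota_W g=\alpha$ has a positive solution near $g_s$ whenever $\alpha$ is close to $\iota_W g_s$. The reason is that the affine space of solutions is nonempty and passes near $g_s$. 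So away from the zeros only closeness is needed.

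\textbf{Main obstacle: the zeros.} Near the zeros the equation forces $\{Z_s+X_{h_{s,t}}=0\}=\operatorname{Crit}\phi_{s,t}$, with matching first-order behaviour. Here I would choose $h_{s,t}=\chi\cdot c\,\psi_{s,t}$, suitably normalised, with $\chi$ a cutoff equal to $1$ on $U$, and run a parametric Moser/implicit-function argument in $t$.
\begin{itemize}
\item The linearised problem at $t=0$ asks for the infinitesimal change of $\lambda$ and $g$ producing a given infinitesimal change of $\phi$.
\item Pointwise it is an algebraic, surjective, linear system in the unknowns $(dh,\sigma,\rho)$.
\item It can be solved smoothly in $(s,x)$, and the smoothing can be patched by partitions of unity thanks to convexity (Lemma~\ref{lm:space-of-pot}).
\end{itemize}
Integrating this solution over a short interval $t\in[0,\eps]$, uniformly in the compact parameter $K$, gives the covering homotopy. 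The uniform length $\eps$ comes from the fact that positivity of $g^{\sym}$ and the Lyapunov inequality are open conditions on a compact set. This is exactly why the statement is only a microfibration and not a fibration. The step I expect to be hardest is controlling the degenerate zeros, for example embryos or Morse--Bott families, where $d\psi_{s,t}$ does not vanish to first order but $Z_s$ does. If the linearised solution degenerates there, I would first try to handle that region with the explicit local normal forms of Lemma~\ref{lm:lm:Lyap-pot}.
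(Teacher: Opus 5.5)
\textbf{There is a genuine gap, at exactly the step you flag.} The ansatz $\lambda_{s,t}=\lambda_s+dh_{s,t}$, with $\omega_s$ fixed and potential exactly $\phi_{s,t}$, is not just hard to solve near the zeros: it can be unsolvable for every $t>0$.

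First observe that if $\iota_Z g=d\phi$ with $g^{\sym}>0$, then at a critical point of $\phi$ we get $g^{\sym}(Z,Z)=0$. Hence $Z=0$ there, and so $\lambda=\iota_Z d\lambda=0$ as a covector.

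Now take the following data.
\begin{itemize}
\item $X=T^*S^1\times\C$ with $\lambda=p\,dq+\tfrac12(x\,dy-y\,dx)$.
\item $\phi=p^2+x^2+y^2$ and $g=dq^2+2dp^2+4dx^2+4dy^2$, so that $\iota_Z g=d\phi$.
\item $\phi_t=\phi+t\chi p$, where $\chi$ is a cutoff equal to $1$ near the zero section $S$.
\end{itemize}
Near $S$ the critical set of $\phi_t$ is the circle $S_t=\{p=-t/2,\ x=y=0\}$. So any covering $\lambda_t$ vanishes along $S_t$, which forces $\int_{S_t}\lambda_t=0$. An exact correction instead gives $\int_{S_t}(\lambda+dh_t)=\int_{S_t}p\,dq=-\pi t\neq 0$.

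The same failure appears in your linearization. Along the zero set the term $\iota_Z\dot g$ drops out, so the equation forces $d\dot h=\iota_{G^{-1}d\dot\phi}\,\omega$ there. This covector field need not be closed along the zero set; in the example it restricts to $\tfrac12\,dq$ on $S$, which has period $\pi$. So your claim that the linearized problem is ``pointwise algebraic and surjective in $(dh,\sigma,\rho)$'' is false. The unknown $dh$ is not pointwise free, and the skew and symmetric freedoms in $g$ are invisible precisely where they are needed.

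Even for isolated nondegenerate zeros, where no such period obstruction arises, you would still owe two things: a division argument writing $d\phi_{s,t}$ as a positive form applied to $Z_{s,t}$, and a genuine nonlinear implicit-function step. ``Integrating the linearized solution'' is not a proof. Uniformity of $\eps$ is not the real issue.

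\textbf{The missing idea is to freeze a different piece of data.} The paper fixes the bundle isomorphism $A_s$ defined by $\omega_s(v,A_sY)=g_s(v,Y)$, and notes that $\lambda_s=d\phi_s\circ A_s^{-1}$. It then simply sets
\[
\lambda_{s,t}:=d\phi_{s,t}\circ A_s^{-1},\qquad g_{s,t}(V,Y):=d\lambda_{s,t}(V,A_sY).
\]
With these definitions $\iota_{Z_{s,t}}g_{s,t}=d\phi_{s,t}$ holds identically, so there is no equation to solve. What remains is that $d\lambda_{s,t}$ is nondegenerate and $g^{\sym}_{s,t}$ is positive. These are open conditions, $C^0$-close to those at $t=0$ because $\phi_{s,t}$ is $C^2$-close to $\phi_s$, so they hold for $t\in[0,\eps]$ uniformly over compact $K$.

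The price is that $\lambda_{s,t}-\lambda_s$ is in general neither exact nor closed, so $\omega$ moves. In the example the paper's recipe gives $\lambda_t=\lambda+\tfrac t2\,dq$ near $S$, which indeed vanishes on $S_t$. As Remark~\ref{rem:perturb-fixing-symplectic} notes, the symplectic form can be restored afterwards only by pulling back by a diffeomorphism $f$. That changes the potential to $\phi_{s,t}\circ f$, which is exactly the flexibility your ansatz rules out.
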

 \begin{proof}
Denote $\om_s=d\lambda_s$,  and define $I^s: TV\to T^*V$  by the formula $I^s(v)=\iota_{v}\om_s$. Define a map $A_s:TX\to TX$ by the formula 
$\om_s(v,A_s Y)=g_s(v,Y),\; v,Y\in TV$. Note that $A_s$ is invertible.

Then  for the Liouville field $Z_s$ of $\lambda_s$, we have 
$$d\phi_s(Y)=g_s(Z_s,Y)=\om_s(Z_s,AY)=\lambda(A_sY),\; Y\in TV, $$
i.e. $\lambda_s=d\phi_s\circ A_s^{-1}$.
Given a homotopy $\phi_{s,t}$ of $\phi_s=\phi_{s,0}$, $s\in K$, $t\in[0,1]$, define $\lambda_{s,t}=d\phi_{s,t}\circ A_s^{-1}$. Then $\lambda_{s,t}$ is Liouville for small $t$, and if $Z_{s,t}$  denotes the corresponding Liouville field, 
i.e. $\iota_{Z_{s,t}} d\lambda_{s,t}=\lambda_{s,t}$, then  $$d\phi_{s,t}(Y)=\lambda_{s,t}(A_sY)=\om_{s,t}(Z_{s,t},A_sY) =g_{s,t}(Z_{s,t},Y),$$ where $g_{s,t}(X,Y):=d\lambda_{s,t}(X,A_sY)$. In other words, $Z_{s,t}$ is  the near-gradient for $\phi_{s,t}$ with respect to the near-metric $g_{s,t}$.
 \end{proof}
 
 \begin{cor}[Cieliebak, \cite{Ci23}]\label{cor:W-Morse} Any    Weinstein structure   can be  deformed  by an arbitrary $C^\infty$-small homotopy to a Weinstein structure admitting a Morse potential.  \end{cor}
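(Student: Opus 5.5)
The plan is to deduce the corollary from the microfibration property of Lemma \ref{lm:Ciel-perturb}, using the standard fact that Morse functions are $C^\infty$-dense. Let $(X,\lambda)$ be Weinstein, with near-metric $g$ and potential $\phi$, so that $\iota_Z g=d\phi$.

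\textbf{Step 1: perturb the potential.} Choose a Morse function $\phi'$ that is $C^\infty$-close to $\phi$. Consider the linear homotopy $\phi_t=(1-t)\phi+t\phi'$. It is better to rescale and treat it as a family of arbitrarily small homotopies. Fix the operator $A$ from the proof of Lemma \ref{lm:Ciel-perturb}, defined by $\om(v,AY)=g(v,Y)$. Then set
$$\lambda_t=d\phi_t\circ A^{-1}, \qquad g_t(u,w)=d\lambda_t(u,Aw).$$
At $t=0$ this recovers $(\lambda,g)$. The formula is global and explicit, so the only issue is openness. We need that $d\lambda_t$ stays nondegenerate and that $g_t$ stays positive. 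Both are open conditions, satisfied at $t=0$, and they depend only on the $1$-jet and $2$-jet of $\phi_t-\phi$. So if $\phi'-\phi$ is small in $C^2$, with a bound relative to $A$ and $g$ in a fine (Whitney) topology, the conditions hold for all $t\in[0,1]$. The identity $d\phi_t=\iota_{Z_t}g_t$ then follows formally, exactly as in the proof of Lemma \ref{lm:Ciel-perturb}. The whole homotopy $(\lambda_t,g_t,\phi_t)$ is $C^\infty$-small because $\phi'-\phi$ is.

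\textbf{Step 2: preserve the Liouville conditions.} This is where $X$ is noncompact, and I expect it to be the main obstacle. The new $\lambda_t$ must still be a finite type Liouville form, i.e. complete with a cylindrical end. I would handle this by choosing a defining domain $W$ and taking $\phi'=\phi$ outside a compact neighborhood of $W$ that contains all zeros of $Z$.
\begin{itemize}
\item This is possible because $\phi$ has no critical points outside $\Int W$: indeed $d\phi(Z)=g(Z,Z)>0$ wherever $Z\neq0$.
\item So we may cut off the Morse perturbation to a compact set. Morse approximation relative to a region without critical points is standard.
\end{itemize}
Then $\lambda_t=\lambda$ near infinity, and the Liouville field is unchanged there. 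Completeness and the cylindrical-end conditions (2) and (3) of Definition \ref{def: Liouv man} therefore persist. Note that $Z_t$ remains outward transverse to $\p W'$ for some slightly larger domain $W'$, since transversality is open on the compact boundary and nothing changed outside.

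Alternatively, one can work at the level of germs of a defining domain, where everything is compact. The conclusion is that $(\lambda_1,g_1,\phi')$ is a Weinstein structure with Morse potential $\phi'$, reached from the original one by a $C^\infty$-small homotopy.
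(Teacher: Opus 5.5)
Your proposal is correct and is essentially the paper's argument. The corollary comes from the microfibration Lemma~\ref{lm:Ciel-perturb}: take a $C^\infty$-small Morse approximation $\phi'$ of the potential, then lift the interpolation $\phi_t$ via $\lambda_t=d\phi_t\circ A^{-1}$ and $g_t=d\lambda_t(\cdot,A\,\cdot)$. Your cutoff making $\phi'-\phi$ compactly supported is possible because the critical points of $\phi$ are exactly the zeros of $Z$, which lie in $\Int W$. It simply makes explicit the finite-type bookkeeping that the paper leaves implicit (cf.\ the compactly supported diffeomorphism in Remark~\ref{rem:perturb-fixing-symplectic}).
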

 
 As was pointed out above, this corollary is the main reason why  in the definition of a Weinstein manifold we use   potentials rather than  a possibly larger class of Lyapunov functions.
 
 \begin{remark}\label{rem:perturb-fixing-symplectic}
 The Morsification of a  Weinstein structure  provided  by  Lemma \ref{lm:Ciel-perturb} and its Corollary \ref{cor:W-Morse}   may change the background symplectic form $\omega$ to a different though diffeomorphic symplectic form $\wt\om=\phi^*\om$ for a compactly supported   diffeomorphism $\phi:X\to X$. In most applications this does not matter. However,   if it is desirable to keep the  background symplectic structure unchanged  this can be achieved  using Remark \ref{rem:pot-inv}. 
 \end{remark}
 
\begin{example}\label{ex:complex-geom} Suppose  that  we are given a  finite type Stein complex manifold $(X,J)$, i.e. a complex manifold which admits an exhausting  $J$-convex (i.e. strictly plurisubharmonic) function $\phi$ without critical points at infinity. Here the  $J$-convexity condition means that
the  symmetric form $g_\phi(U,V):=dd^\C\phi(U,JV)$,  where $U,V\in TX$ and  $d^\C\phi(X)=d\phi(JX)$, is positive definite.
Then, as shown in  \cite{CE12}, see also  \cite{EG91}, $(X,d^\C\phi)$ is a Weinstein manifold, with the Liouville field $Z_\phi$ equal to the $g_\phi$-gradient of $\phi$.
\end{example}

 The Weinstein condition implies that $\Skel( X,\lambda)$ is the union of the $Z$-stable manifolds of   critical points of $\phi$, i.e. points converging to $\text{Crit}(\phi)$ in forward time.  
 If $\phi$ is Morse or generalized Morse it was shown   in \cite{CE12}, see also \cite{EG91, E95}, that
   $\Skel(X, \lambda)$ is the union of submanifolds which are isotropic for $\lambda$, and hence for $\om$.

\begin{remark}
Note that $\lambda$-isotropic is equivalent to $\omega$-isotropic and $Z$-conic.
\end{remark} 
 
If  $(X, \lambda)$ is a Weinstein manifold, then any defining  Liouville domain $(W, \lambda|_W)$    can be made a sub-level set $\{\phi\leq c\}$ for some choice of a potential $\phi$ which has no critical values $\geq c$.  Hence, we will refer to Liouville defining domains  for  Weinstein manifolds as Weinstein defining domains, and   {\em Liouville germs}  of Weinstein manifolds  as  {\em Weinstein germs}.   
   
\begin{example}\label{ex:cotangent}
The standard Liouville structure $\lambda=pdq$ on the cotangent bundle $T^*M$ of a closed manifold $M$ is  Weinstein. The existence of a potential for the corresponding Liouville field $Z=p\frac{\p}{\p p}$ can be verified directly, see Example \ref{ex:cot-bundle-potential}. Or it can be seen from the complex geometric construction in Example \ref{ex:complex-geom},   as $Z$  can be identified with $Z_\phi$, where $\phi$ is the squared-distance   function from $M$ in the complexification of a compatible real analytic structure,  in any  Hermitian  metric, see \cite{CE12} for the details.
 \end{example}

\subsection{Homotopies of Liouville  and Weinstein structures}
 
\begin{definition}  A {\it homotopy of (finite type) Liouville structures} $(X,\lambda_t)$ is a family $\lambda_t$, $t\in[0,1]$, of finite type Liouville structures on a manifold $X$  admitting a   smooth  family $W_t \subset X$ of defining Liouville domains.  \end{definition}

Given such a homotopy  $(X,  \lambda_t)$, there exists an isotopy $\psi_t:\ X\to  X$ such that $\psi_t^*\om_t=\om_0$, where $\omega_t=d\lambda_t$. Moreover, one can always arrange that $\psi_t^*\lambda_t=\lambda_0+dH_t$ for uniformly compactly supported  functions $H_t$, see \cite{CE12}, Sections 11.1 and 11.2. In particular, it is always sufficient to consider homotopies fixing the symplectic form, and moreover, changing the Liouville form by adding a compactly supported exact form. 

Note that for any fixed symplectic form $\omega$ the space of Liouville structures on $(X,\omega)$ which agree with a given Liouville structure at infinity is convex, hence any two such Liouville structures are canonically homotopic. 
Note also that if a symplectomorphism $f:(X_1,d\lambda_1)\to (X_2,d\lambda_2)$  satisfies $f^*\lambda_2=h\lambda_1$ for a  positive function $h$, then the Liouville structures $f^*\lambda_2$ and $\lambda_1$ on $X_1$ are Liouville homotopic, see \cite{CE12}.

\begin{definition}  A {\it homotopy of (finite type) Weinstein structures} $(X,\lambda_t)$ is a homotopy of (finite type) Liouville structures admitting
  a smooth family of potentials $\phi_t$.  In other words, there exists a smooth family of near-metrics $g_t$ such that  $\iota_{Z_t} g_t = d \phi_t$. \end{definition}

\begin{remark} It is not known whether two Weinstein type Liouville forms which are Liouville homotopic are Weinstein homotopic, i.e. whether  there exists a homotopy $\wt Z_t$ connecting $Z_0$ and $Z_1$ which  admits a family of potentials. \end{remark}

We say that a homotopy of Liouville or Weinstein structures is {\em strong} if the skeleton is constant throughout the homotopy.  We say that two Liouville or Weinstein manifolds $(X_1,\lambda_1)$ and $(X_2,\lambda_2)$ are {\em isomorphic} if there exists a Liouville isomorphism $f:X_1 \to X_2$, $f^*\lambda_2=\lambda_1$ between them, and {\em deformation equivalent} (resp. {\em strongly deformation equivalent}) if there exists a diffeomorphism $f:X_1 \to X_2$ such that $(X_1,\lambda_1)$ and $(X_1,f^*\lambda_2)$ are homotopic (resp. strongly homotopic). 

One may formulate similar notions at the level of Weinstein (or Liouville) germs. We will do so in Section \ref{sec:block-homotopy} below for the more general notion of W-blocks.


 \section{W-blocks}\label{sec:Wbc}

\subsection{Manifolds with corners}\label{ssec:Mbc} 

\subsubsection{Faces and facets}
Recall that an  $n$-dimensional smooth manifold $M$ with corners is (Hausdorff, paracompact, and) locally smoothly modeled on open subsets of $\cI^n \subset \R^n$, 
where $\cI$ denotes the  interval $[0,\eps)$ with an unspecified small $\eps>0$.
When we discuss smooth objects on $M$ such as functions or differential forms, we always mean objects that locally extend from  an open in $\cI^n $ to an object on an open in $\R^n$ which is smooth in the traditional sense.

Given an $n$-dimensional smooth manifold $M$, and a fixed $k\leq n$, 
a point $x \in M$  is called a corner of order $k$ if there is a neighborhood of $x$ in $M$ diffeomorphic to a neighborhood of the origin in $\cI^k \times \R^{n-k}$ such that $x$ corresponds to the origin.  
  We denote the locus of order $k$ corners by $\partial_k M$, and note that it is an $(n-k)$-dimensional smooth manifold.

Each connected component $V$ of  $\p_kM$ is naturally the interior of an $(n-k)$-dimensional manifold with corners $P$ such that the inclusion $j:V \hookrightarrow M$ extends to an immersion $\ol j:P\to M$ with image $\ol j(P)$  the closure of $V$. 

\begin{definition} We call each such $P$ a {\em   $k$-face} of $M$, and will also call a 1-face of $M$ a {\em facet}. \end{definition}
 
 \subsubsection{Self-biting}
It is convenient to place a  modest  global condition on the closure of each component of $\partial_kM$ within $M$. We say $M$  is {\em self-biting} when for some $k$-face $V$ the immersion  $\ol j:P\to M$ fails to be an embedding. If $M$ is not self-biting, we identify the $k$-face $P$ with its image  
 $\ol j(P)$.
 For a manifold with corners $M$ which is not self-biting, each $k$-face $P$ is a manifold with corners which is also not self-biting.
 
In what follows, {\em we will assume all manifolds  with corners are not self-biting, unless explicitly stated otherwise}.

 \begin{figure}[h]
\includegraphics[scale=0.12]{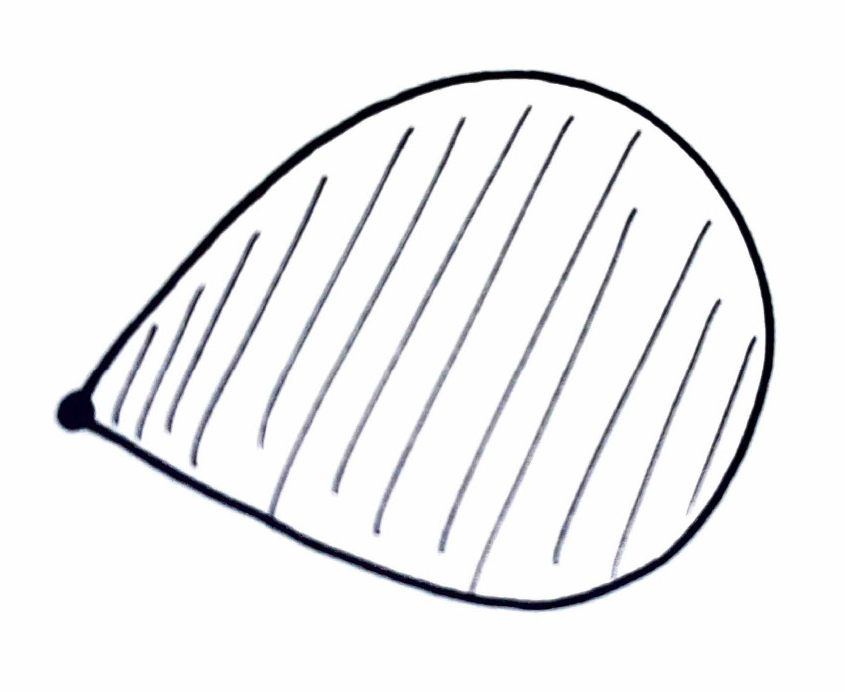}
\caption{A 2-dimensional manifold with corners $M$ with a self-biting face. Indeed, there's a single $1$-face $P$, which is diffeomorphic to a closed interval $[0,1]$, and the immersion $P \to M$ fails to be an embedding because both boundary points of the interval get sent to the same point, namely the unique order 2 corner of the boundary.}
\label{fig:collarstr}
\end{figure}

%
%


\subsubsection{Corner structure}

We will  assume our manifolds  with corners $M$  come equipped with a {\it corner structure},
 in the  sense of a compatible system of collars as follows. 
 
 First, 
  we require that
each  $k$-face  $P \subset M$  comes equipped with   the germ of an embedded split collar neighborhood $U_P = P \times \cI^k \subset M$. Equivalently,  near each point $x \in P$ we have canonical  collar coordinates $x=(y,t)$, where $y \in P$ and $t=(t_1, \ldots , t_k) \in \cI^k$. Note $P$ is cut out by $t_1 = \cdots = t_k=0$, and more generally, for $j\leq k$, each $j$-face $Q\subset M$ intersecting $U_P$  is cut out by setting exactly $j$ of the coordinates $t_i$ equal to zero. 
We require the  compatibility that, up to the reordering, these $j$ coordinates $t_i$ agree with the collar structure for $Q\subset M$. We only care about the germ of the collar $U_P$ at $P$, in other words $\cI=[0,\varepsilon)$ for $\varepsilon>0$ unspecified and arbitrarily small. We will sometimes refer to the coordinate $t_j$ on the collar $U_P$ of a $k$-face $P$ as a {\em distance} to the facet $\{t_j=0\}$.

\subsubsection{Smoothing corners} \label{sec:smooth-corners}

Let $M$ be a smooth manifold with corners,  equipped   with a corner structure. A list  (i.e.~ordered set) $\bP=[P_1,\dots,P_k]$ of facets of $M$   is called {\em admissible} if
  any two facets in the list which share a common $k$-face $Q$  also   share a common 2-face adjacent to  $Q$.
 Given an admissible list $\bP$
  we can form a new manifold with corners $M^{\bP, \frown} \subset M$, equipped with corner structure, by the following inductive procedure. Begin with faces $P_1$ and $P_2$.  Note that  the non-self-biting condition for faces of $M$ implies that    $Q:=P_1\cap P_2$ is a  union of disjoint  2-faces, and hence, can be viewed as one, possibly disconnected 2-face.   Let $t_1,t_2$ be collar coordinates on $\Op Q$\footnote{We use in this paper Gromov's notation $\Op$ for an unspecified  neighborhood of a set, in line with our emphasis on germs of various  sets} such that $P_j\cap\Op Q=\{t_j=0\}$, and $M\cap\Op Q=\{0\leq t_1,t_2<\sigma\}$ for a sufficiently small $\sigma>0$. Choose a    positive $C^\infty$-smoothing $\theta(s)$ of the function $|s|$ on $[-\frac\sigma2,\frac\sigma2]$ which is equal to $|s|$ for $|s|>\frac\sigma4$ and replace  $M\cap\Op Q$ with  
 $t_1+t_2\geq \theta(t_1-t_2),  t_1,t_2<\sigma$.  As a result  we get a new manifold with corners $M^{\frown;\{P_1,P_2\}}$ where we smoothed the corner along the 2-face $Q$ and replaced facets $P_1, P_2$ by a facet $P^\frown_{12}$ which is a result of gluing  $P_1$ and $P_2$ along $Q$. The new collar coordinate is $\frac{1}{2}(t_1+t_2-\theta(t_1-t_2))$ which away from $\Op(Q)$ is either $t_1$ or $t_2$. We emphasize that the non-self-biting condition still holds for $M^{\frown;\{P_1,P_2\}}$. 
 
 We repeat that procedure again, smoothing the corner of $M^{\frown;\{P_1,P_2\}}$ along the 2-face $P^\frown_{12}\cap P_3$, etc. until all facets from the list $\bP=[P_1,\dots,P_k]$  are exhausted.
 
  The resulting manifold with corners     $M^{\frown;\bP}$, has a  new smooth (possibly disconnected) facet $P^\frown_{1\dots k}$ replacing  the facets $P_1,\dots, P_k$.  Note that up to an isotopy of $M$ supported near its boundary, $M^{\frown;\bP}$ only depends on the underlying set $\bP$, and not its ordering. 
In particular, if  $\bP$ is a list of all facets of $M$, then  $M^\frown:=M^{ \frown,\bP}$ is  a smooth manifold with  smooth boundary, and  $P_{1\dots k}^\frown=\p M^\frown$.

  \begin{figure}[h!]
 \centering
  \includegraphics[scale=0.3]{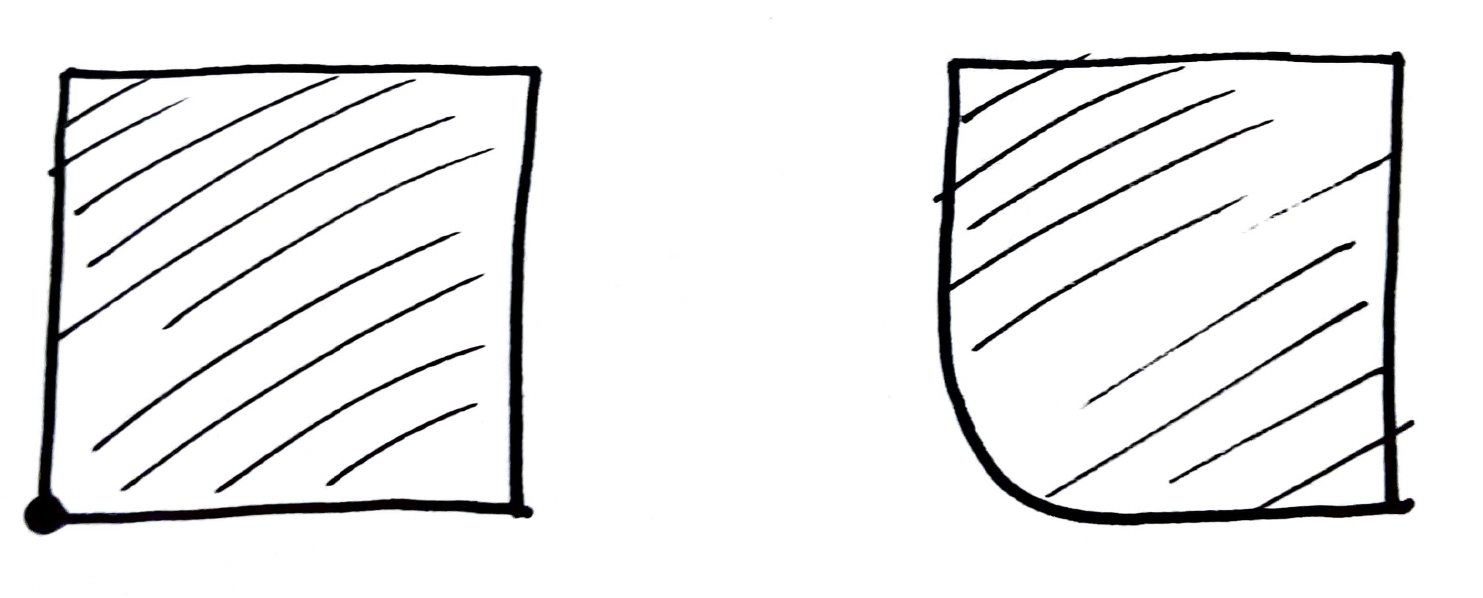}
 \caption{Smoothing corners of a manifold with corners.}
 \label{corner-smoothing}
 \end{figure}


\subsubsection{Collar extension and restriction}\label{sec: collar ext/rest}
Given a manifold with corners $M$ equipped with a corner structure, it will be useful to introduce,
for $\delta\in (-\infty, \eps)$, closely related 
manifolds with corners $M^{\delta \llcorner}$  equipped with corner structures.

First, we set $M^{0 \llcorner} = M$. In general, we will have $M^{\delta_1 \llcorner} \supset M^{\delta_2 \llcorner}$,  for $\delta_1 <\delta_2$, and a natural identification $(M^{\delta_1 \llcorner})^{\delta_2 \llcorner}= M^{(\delta_1+\delta_2) \llcorner}$.

For $\delta \in (0, \eps)$, and any $k$-face $P \subset M$, set
 $$
 U_P^{[0,\delta)} = P \times [0, \delta)^k \subset P \times [0, \eps)^k = U_P.$$
 Note that if  $\ell>k$,  $Q$  is an $\ell$-face and $P$  is a $k$-face adjacent to $Q$ then   $U_P^{[0,\delta)} \supset U_Q^{[0,\delta)} $.
 Denote 
$$   U^{[0,\delta) } =  \bigcup \limits_{\mbox{$1$-faces $P$}} U_P^{[0,\delta)} =  \bigcup \limits_{\mbox{$k$-faces $P$ with $k>0$}} U_P^{[0,\delta)}$$ and  define $M^{\delta \llcorner} := M \setminus U^{[0,\delta) } $ with its natural structure of a smooth manifold with corners.

Note that  for any  $k$-face $P$ of $M$ we have  $$P^{\delta \llcorner} \times \{ \underbrace{(\delta,\dots,\delta) }_k \}$$ the corresponding  $k$-face of $M^{\delta \llcorner}$,  where  the notation $P^{\delta \llcorner} $ has a similar meaning applied to the manifold with corners $P$.  
%
We denote also
$$
U_P^{\delta \llcorner} 
=U_P \setminus\bigcup\limits_{Q \;\hbox{is a $1$-face of $P$}}U_Q^{[0,\delta)} .$$
 The collar coordinates $t-\delta$ provide a natural corner structure on $M^{\delta \llcorner}$.

\begin{definition} For $\delta \in (-\infty, 0)$,  
 and any $k$-face $P \subset M$, set
$$
U_P^{[\delta, 0)} = P \times [\delta,0)^k , \qquad U^{[\delta,0 )} =   \bigcup \limits_{\mbox{$k$-faces $P$ with $k>0$}} U_P^{[\delta, 0)} 
$$
Then we define $M^{\delta \llcorner} = M  \cup U^{[\delta,0)}$ with its natural structure as smooth manifold with corners.
\end{definition}
Note that the $k$-faces of $M^{\delta \llcorner}$ are given by the same construction $P^{\delta \llcorner}$ applied to the $k$-faces of $M$,
%
and have the natural collars
$$
U_P^{\delta \llcorner} =  P^{\delta \llcorner} \times [\delta, \eps)^k
$$
  with collar coordinates $t-\delta$ providing a natural corner structure on $M^{\delta \llcorner}$.

\begin{definition}
For $\delta=\pm\eps/2$, we will set $M^{\pm \llcorner}= M^{\delta \llcorner}$.  We will call $M^{- \llcorner}$ the {\em collar extension} of $M$, and  $M^{+ \llcorner}$ the {\em collar restriction} of $M$. 
The collar restriction of the collar extension will recover the original manifold $M = (M^{- \llcorner})^{+\llcorner}$, and vice versa $M=(M^{+\llcorner})^{-\llcorner}$.
\end{definition}

  \begin{figure}[h!]
 \centering
  \includegraphics[scale=0.3]{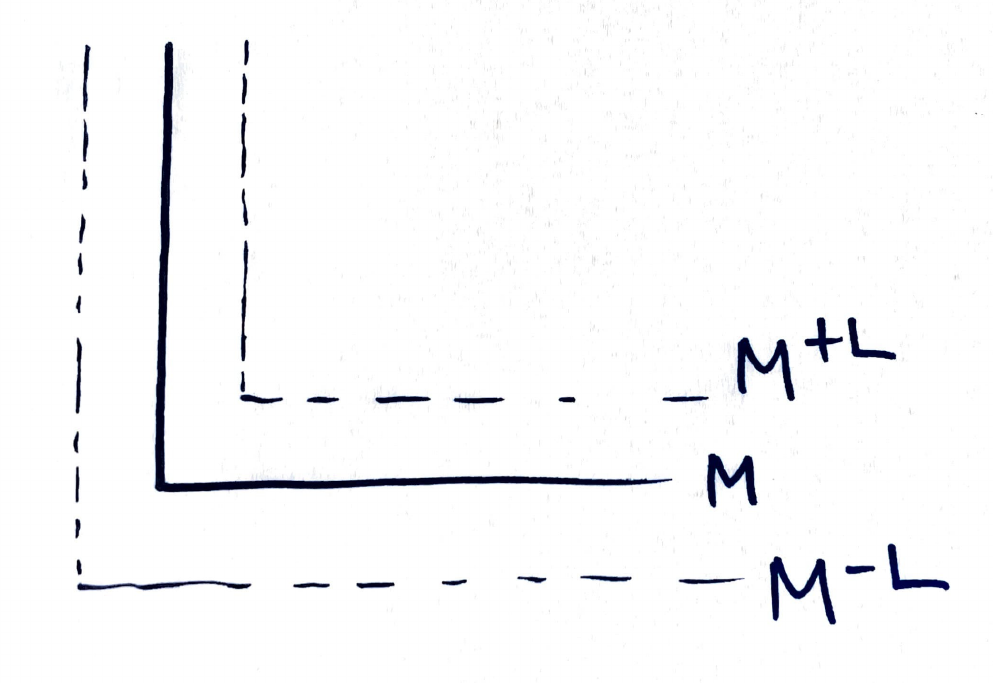}
 \caption{Collar extension and restriction.}
 \label{corner-extension-reduction}
 \end{figure}

\subsection{Definition of W-blocks}

Here we introduce the notion of W-blocks, which are closely related to Weinstein manifolds but differ in two key aspects: 
\begin{enumerate}
\item  they  allow boundaries and corners,
\item they are germs along their skeleta.
\end{enumerate}

   \begin{figure}[h!]
 \centering
  \includegraphics[scale=0.25]{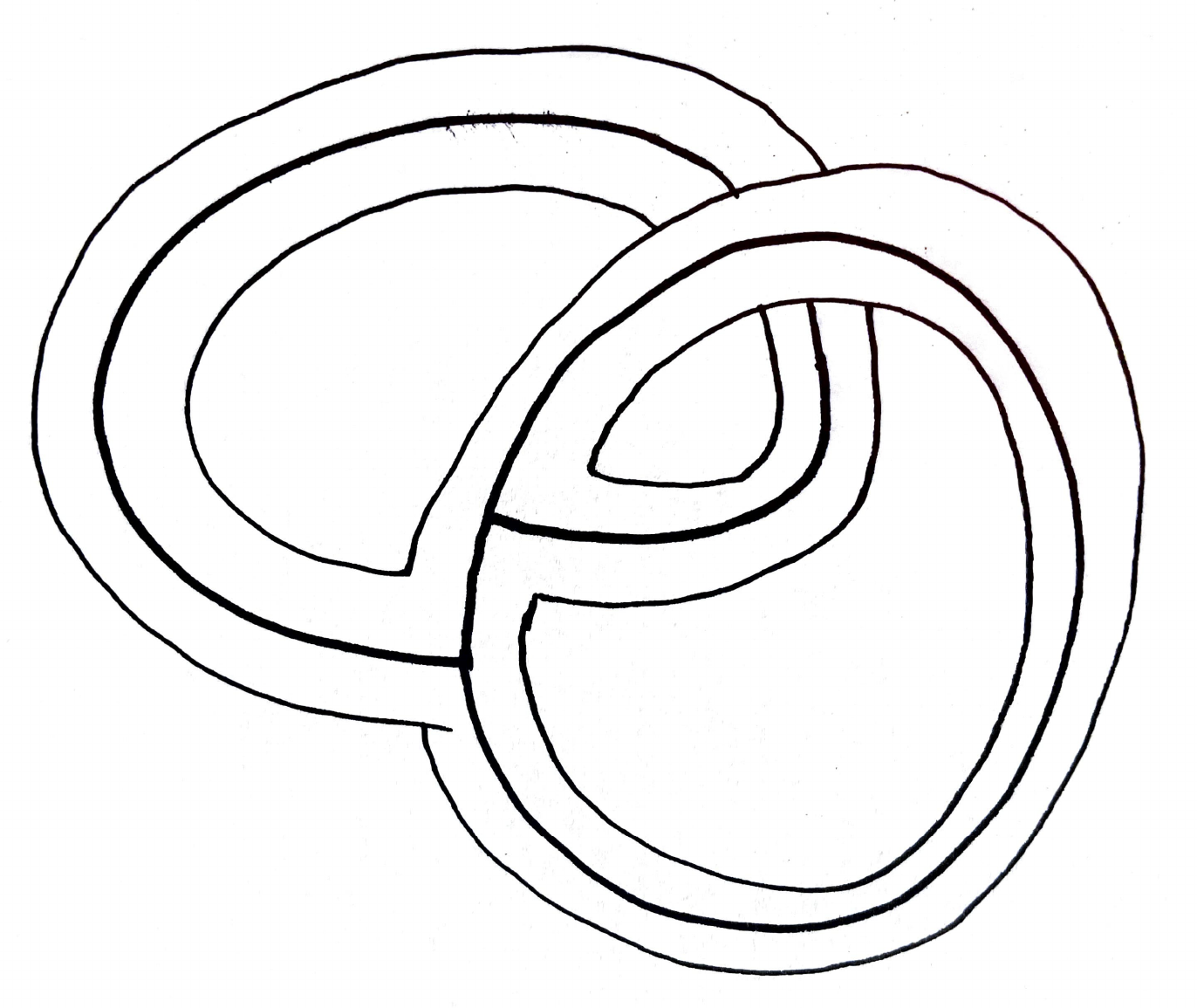}
 \caption{A Weinstein germ is the germ of a Weinstein manifold along its skeleton. }
 \label{germ-at-skeleton}
 \end{figure}

First, let us give a definition of a $2n$-dimensional Weinstein manifold with corners which is  inductive on the dimension $2n$ with the base case simply a  $0$-dimensional Weinstein manifold. As a guiding example, we recommend the reader keep in mind the
cotangent bundle of a compact  $n$-dimensional smooth manifold with corners (see Example~\ref{ex:co-bl} below).

For  $X$ a smooth manifold with corners and $W$ a compact manifold with corners, we will require any codimension zero embedding $W \subset X$ to have the following property: each facet (1-face) $F$ of $W$ is either entirely contained in $\partial X$ or is properly embedded in $X$, so that in the latter case $F$ intersects $\partial X$ transversely along its boundary and the interior of $F$ is contained in the interior of $X$. The union of the former facets is denoted $\partial_h W \subset \partial X$, the {\em horizontal boundary}, and the union of the latter facets is denoted $\partial_v W$, the {\em vertical boundary}. We will further assume compatibility of the collar structures in that the collar coordinates of facets in $\partial_h W$ are given by the collar coordinates of $\partial X$. 


\subsubsection{L-blocks}


 \begin{definition}
 
 A $2n$-dimensional  {\em Liouville manifold with corners} $(X,\lambda)$ is a manifold with corners $X$ with Liouville form $\lambda$, symplectic form $\omega = d\lambda$, and complete Liouville field $Z$ given by $\iota_Z \omega = \lambda$, satisfying  the following:
 
 \begin{itemize}
 
 \item  (Splittings) Each $k$-face $P \subset X$, with $k> 0$, admits a splitting $P = N \times \R^k$, so a further splitting  of its collar neighborhood $U_P =  N \times \cI^k \times \R^k$, so that we have an identification of Liouville forms
 $$(U_P, \lambda)  =  (N, \lambda_N) \times (T^*\cI^k, s dt)$$ 
where $(N, \lambda_N)$ is a $(2n-2k)$-dimensional Liouville manifold with corners  (inductively defined), $t$ is the collar coordinate on $\cI^k$, and $s$ is the splitting coordinate on $\R^k$.

Note such a splitting is unique if it exists (see Remark~\ref{rem:spl} immediately below).

 \end{itemize}

 We call $N$ the {\em nucleus} of the $k$-face $P$. Note   $P$ is coisotropic, $Z$-invariant, and under the splitting  $P = N \times \R^k$, we have $Z =Z_N \times s\partial_s$, so $N$ is $Z$-invariant as well. 

We also require the following to hold:

  \begin{itemize}

  \item (Liouville property)  There exists a manifold with corners $W \subset X$, which is a compact subset of $X$,  called a {\em defining domain}, such that  $Z$ is outwardly transverse to $\partial_v W$ and
    $
  X =  \bigcup_{t>0}Z^{t}(W).$


 \end{itemize}
 
  The {\em skeleton}  of $(X,\lambda)$ is the subset $\Skel(X, \lambda) = \bigcap_{t>0}Z^{-t}(W)$, which is independent of the choice of defining domain  $W \subset X$.

  \end{definition}

 \begin{remark}\label{rem:spl}
1.  When splittings as in the above definition exist, they are in fact unique. To see this,
 first note that $(N, \lambda_N)$ is the reduction of $(X, \lambda)$ along the $Z$-invariant coisotropic $P \subset X$. Then observe that any Liouville isomorphism of the product 
 $$(N, \lambda_N) \times (T^*\cI^k, s dt)$$ 
which is fixed on $N \times \cI^k$ must in fact be the identity. Indeed, if we fix any point $n\in N$, this reduces to the observation that  any Liouville isomorphism of a cotangent bundle   relative to the base must be the identity.
 \end{remark}

 Now we define the notion of a {\em Liouville block}, or L-block for short, by passing to the germ of a Weinstein manifold with corners near its skeleton.
 
  \begin{definition}

A      $2n$-dimensional {\em L-block} $(\sX,\lambda)$  is the germ  of  a $2n$-dimensional Liouville manifold with corners $(X,\lambda)$ along its  skeleton $\Skel(X, \lambda) \subset X$.  \end{definition}

   More precisely, as in the case of Liouville germs, given a fixed background smooth manifold an L-block is an equivalence class of defining domains for L-blocks under the equivalence relation resulting from identifying two defining domains if the L-block structures agree after restriction to a smaller defining domain. Note that any construction on Liouville manifolds with corners, or their defining domains, which are conical with respect to $Z$ outside of the skeleton are well-defined at the level of L-blocks.   
   
 Any two defining domains for the same L-block have the same skeleton and hence we may denote it by  $\Skel(\sX, \lambda)$, the {\em skeleton}  of the L-block $(\sX,\lambda)$. The {\em ideal boundary}  of  $(\sX,\lambda)$ is the cooriented contact manifold with corners $$\partial_\infty \sX \simeq \big(\sX \setminus \Skel(\sX, \lambda)\big)/\R,$$ where  the local $\R$-action  is by the flow of $Z$.  Again, this really means that canonical contactomorphism between the contact manifolds $\p_\infty W = ( W \setminus \Skel(W,\lambda) ) / \R$ and  $\p_\infty \sX$ is well defined at the level of germs.

  Given a $2n$-dimensional Liouville manifold with corners $(X,\lambda)$ and a $k$-face $P \subset X$ with nucleus $N\subset P$, we write  $\sP \subset \sX$ and $\sN\subset \sP$ for their 
   germs along the skeleton $\Skel(X, \lambda) \subset X$ and refer to them as a  $k$-face and nucleus of 
   $(\sX,\lambda)$ respectively. Given any choice of a defining domain $W$ for $X$ we get a choice of representatives for $\sP$ and $\sN$. Indeed $W$ has horizontal boundary $\partial_hW$, each facet consisting of a co-isotropic manifold with boundary whose symplectic reduction is a defining domain for the corresponding facet of $N$.

 Any  defining domain for $(X,\lambda)$ is also called a {\em defining domain} for 
  $(\sX,\lambda)$.
 
   \begin{figure}[h!]
 \centering
  \includegraphics[scale=0.3]{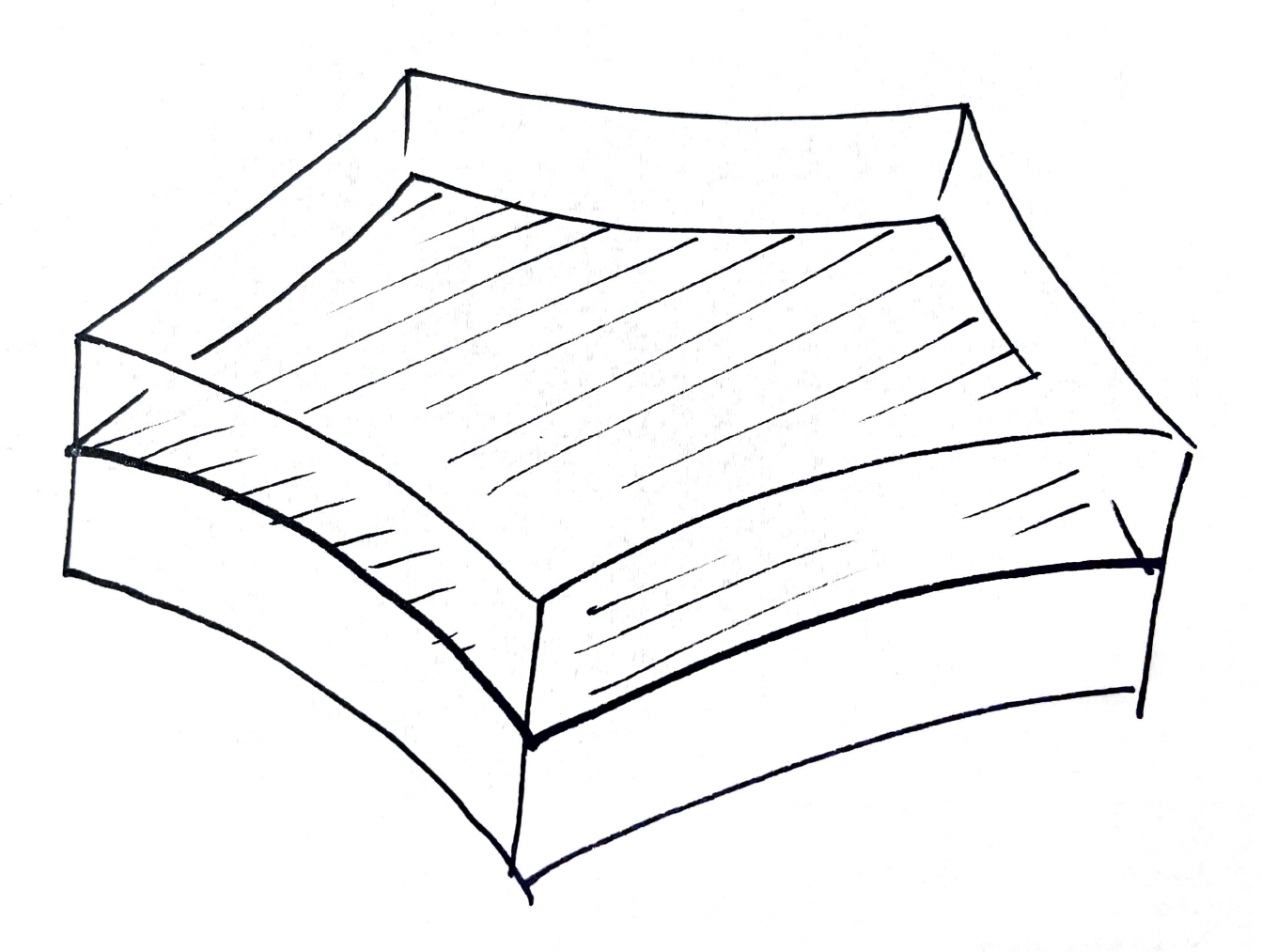}
 \caption{The cotangent bundle of a compact manifold with corners is a cotangent block.}
 \label{cotangent-block}
 \end{figure}

 Now we introduce {\em proper cotangent blocks}, a class of L-blocks of primary interest. To understand our use of the term proper, see Definition ~\ref{ex:gen-co-bl} where we introduce {\em cotangent blocks}.

 \begin{definition}[Proper cotangent blocks]\label{ex:co-bl}

 Let $M$ be a compact  $n$-dimensional smooth manifold with corners. Then its cotangent bundle 
 $(T^*M, pdq)$ is a Liouville manifold with corners.
 Note the $k$-faces $P \subset M$ correspond to the $k$-faces $T^*M|_P  \subset (T^*M, pdq)$, and the corner structure of $M$ induces splittings of $T^*M$. Namely, the collar neighborhood $U_P = P \times \cI^k$ provides an identification
 $T^* U_P = T^* P \times T^* \cI^k$, and hence a splitting $T^* M|_P = N \times \R^k$ with nucleus $N = T^*P$ and $\R^k = T_0^* \cI^k$.  We call  the L-block $(\sT^*M,pdq)$ given by
the
   germ of $(T^*M,pdq)$    along the $0$-section $M \subset T^*M$    a {\em proper cotangent block}. 
\end{definition}

%
%
%
%
%


\subsubsection{Split potentials}

Consider Liouville manifolds with corners $(X,\lambda)$ of Weinstein type, i.e. such that $Z$ admits a potential $\phi : X \to \RR$. For an L-block $(\sX,\lambda)$, which is the germ of a Liouville manifold with corners at the skeleton, we do not care or need to know what $\phi$ is outside of a neighborhood of $\Skel(\sX,\lambda)$, so a potential is the germ at $\Skel(\sX,\lambda)$ of a function  $\phi: \sX \to \RR$.

Recall by definition
each $k$-face $\sP \subset \sX$, with $k> 0$, admits a splitting $\sP = \sN \times \R^k$, hence inherits a further splitting  of its collar neighborhood $U_\sP =  \sN \times \cI^k \times \R^k$, so that we have an identification of Liouville forms
 $$(U_\sP, \lambda)  =  (\sN, \lambda_\sN) \times (T^*\cI^k, s dt)$$ 
where $(\sN, \lambda_\sN)$ is a $(2n-2k)$-dimensional L-block, $t$ is the collar coordinate on $\cI^k$, and $s$ is the splitting coordinate on $\R^k$. Note that if $\phi$ is a defining potential for a defining domain $W$, then $W_{\sN}=W\cap\sN$ is the defining domain for $\sN$ associated to the defining potential $\phi_\sN=\phi|_\sN$.

\begin{definition}\label{def:split-pot}
We say a potential $\phi:\sX\to \R$
is {\em split} 
 if 
for each $k$-face $\sP \subset \sX$, $k> 0$, with the nucleus $\sN$  
we have  on $\Op\sN$
\begin{equation}
  \phi = \phi_\sN + \sum_{i = 1}^k  s_i^2
\end{equation}
\end{definition}

\subsubsection{W-blocks}

We now give the definition of a {\em Weinstein block}, or W-block for short.

\begin{definition}
A  {\em Weinstein manifold with corners} is a Liouville manifold with corners such that $Z$ admits a split potential.\end{definition}

\begin{definition}
A {\em W-block} is an L-block $(\sX,\lambda)$ such that $Z$ admits a split potential, i.e. a W-block is the germ of a Weinstein manifold with corners $(X,\lambda)$ at the skeleton $\Skel(X,\lambda)$.  \end{definition}

 \begin{example}[Split Riemannian proper cotangent blocks]\label{ex:riem-co-bl}

 Let $M$ be a compact  $n$-dimensional smooth manifold with corners. 
Recall  a  proper cotangent block is  the L-block $(\sT^*M,pdq)$ given by
the   germ of the cotangent bundle  $(T^*M,pdq)$    along the $0$-section $M \subset T^*M$

  If we equip $M$ with a Riemannian metric $g$, then  when viewed as a fiberwise quadratic  function $Q_g:T^*M\to \RR$ it is a potential for the Liouville field $p\frac{\p}{\p p}$, see Example \ref{ex:cot-bundle-potential}. If the collar splitting is also a Riemannian splitting for the metric $g$ then $Q_g$ is a split potential. Hence a proper  cotangent block $(\sT^*M,pdq)$ is a W-block.

  \end{example}

%
%
%



\subsection{Homotopies of W-blocks}\label{sec:block-homotopy}
By definition, a W-block is a germ along its skeleton. On the other hand, during a homotopy of its Weinstein structure, the topology of the  skeleton may change. Therefore we will  consider two notions of equivalences of W-blocks under which the skeleton is either unchanged or not. One may make   similar definitions in the category of L-blocks.

\begin{definition}
We say that two W-blocks $(\sX_0,\lambda_0)$ and  $(\sX_1,\lambda_1)$  in the same background manifold  $X$ with corners, are {\em  homotopic} if there is 
\begin{enumerate}
\item  a smooth family of  hypersurfaces $\Sigma_t$  which are transverse to $\p X$ and bounding compact domains $W_t$
\item  a family of Liouville forms $\lambda_t$ on $W_t$
\item a smooth family of functions $\phi_t:W_t\to\RR$
\end{enumerate} such that $(W_t,\lambda_t)$ is a Weinstein domain (with corners) with a potential $\phi_t$  for all $t\in[0,1]$, and for 
   $i= 0,1$, $(W_i,\lambda_i)$ is a defining domain for the  W-block
$(\sX_i,\lambda_i)$. 
 We say that two W-blocks $(\sX_0,\lambda_0)$ and  $(\sX_1,\lambda_1)$   are {\em  strongly homotopic} if there is a    homotopy $(W_t,\lambda_t,\phi_t)$  as above, with constant  skeleton $L=\Skel(W_t,\lambda_t)$ throughout the homotopy.

\end{definition}

\begin{definition}
 Two W-blocks $(\sX_0,\lambda_0)$ and  $(\sX_1,\lambda_1)$  are {\em deformation equivalent} if $\sX_0$ is isomorphic to a W-block which is homotopic to $\sX_1$. If $\sX_0$ is isomorphic to a W-block which is strongly homotopic to $\sX_1$ we will say that $\sX_0$ and $\sX_1$ are {\em strongly deformation equivalent}. 
\end{definition}
Note that a  strong homotopy  of W-block structures is not required to be fixed on boundary faces, but merely requires the strong homotopy of their nuclei and homotopy of the boundary splittings. As in the case of Weinstein manifolds, strong homotopy of W-blocks can be accompanied
by symplectic isotopy of completion of defining domains.

\section{Operations on W-blocks}\label{sec:operations}
  \subsection{Horizontal gluing}\label{sec:hor-gluing}

  Suppose given two $2n$-dimensional W-blocks $(\sX_1,\lambda_1)$, $(\sX_2,\lambda_2)$, 
    boundary faces $\sP_1$ of $\sX_1$, $\sP_2$ of $\sX_2$, and a Liouville isomorphism of nuclei
  $\phi: \sN_1 \risom \sN_2$. Using the given collars and splittings,  we have $\sP_1 = \sN_1\times \RR$, 
   $\sP_2 = \sN_2\times \RR$, and can extend $\phi$ to a diffeomorphism $\Phi:  \sP_1\risom \sP_2$, $ \Phi = \phi \times \text{id} _\RR$.

   \begin{definition} The {\em horizontal gluing} of $(\sX_1,\lambda_1)$ and $(\sX_2,\lambda_2)$ along $\sP_1$, $\sP_2$, $\phi$ is the  W-block $(\sX, \lambda)$, where we set  $$\sX = \sX_1 \mathop{\cup }\limits_\Phi\sX_2= \sX_1 \mathop{\cup}\limits_{\sN_1\sim\sN_2 }\sX_2 , \qquad  \lambda|_{\sX_1} = \lambda_1, \quad  \lambda|_{\sX_2} = \lambda_2.$$
      \end{definition}
      
Note that we implicitly use the collar structures on $\sX_i$ to construct a smooth structure on $\sX$, and then the W-block structure on $\sX$ is determined by restriction of the W-block structures on $\sX_i$. Note also that 
$$
\Skel(\sX, \lambda) = \Skel(\sX_1, \lambda_1)  \mathop{\cup }\limits_{\Skel(\sN, \lambda_\sN)} \Skel(\sX_2, \lambda_2)
$$

  \begin{figure}[h!]
 \centering
  \includegraphics[scale=0.3]{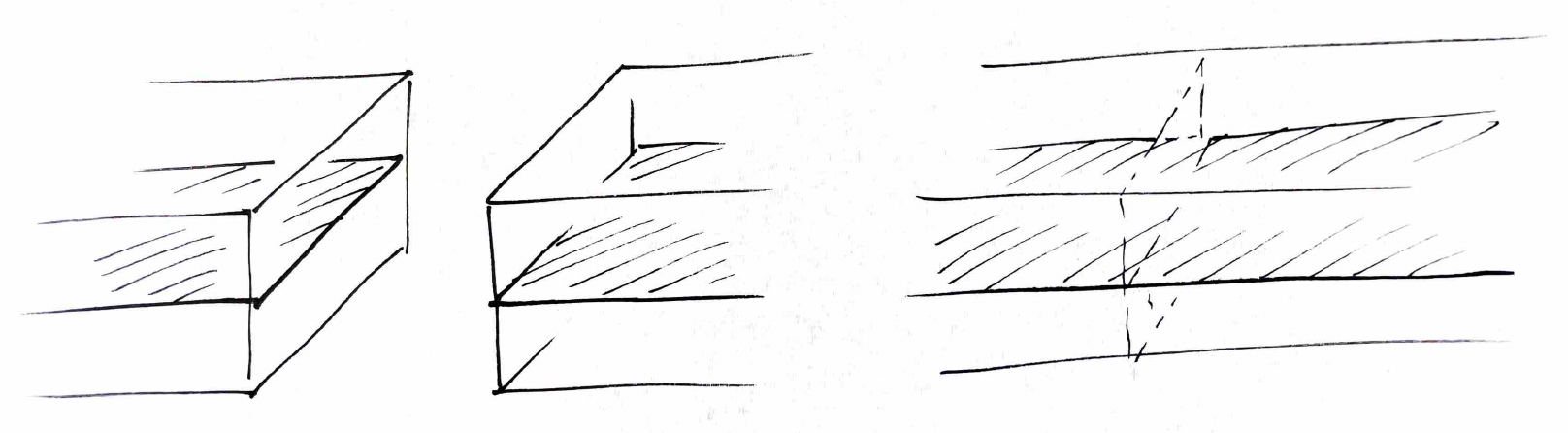}
 \caption{Horizontal gluing of W-blocks.}
 \label{horizontal-gluing}
 \end{figure}

   \begin{example}
   Suppose $M_1, M_2$ are two smooth manifolds with boundary and we have a diffeomorphism $\psi: \p M_1 \risom \p M_2$. Then $\psi$ lifts to a Liouville isomorphism $\phi:T^*(\p M_1) \risom T^*( \p M_2)$, and from there to a  diffeomorphism $\Phi:  T^*N|_{\partial N}\risom T^*M|_{\partial M}$.
Horizontal gluing gives $T^*M_1 \cup_\Phi T^*M_2 = T^*(M_1 \cup_\psi M_2)$, where $M_1 \cup_\psi M_2$ is the smooth gluing  given by $\psi$ (and the collar coordinates). More generally, suppose that we are given two diffeomorphic facets of smooth manifolds with corners $M_1$ and $M_2$ and a diffeomorphism between them. We may glue $M_1$ and $M_2$ along these facets to a new smooth manifold with corners whose cotangent bundle is the horizontal gluing of the cotangent bundles $T^*M_1$ and $T^*M_2$ along the corresponding facets.

   \end{example}
%
    
\subsection{Splitting}
 
The inverse operation to horizontal gluing is called splitting.   
   
\begin{definition}    A {\em splitting  hypersurface} $\sH\subset \sX$ in a 
W-block $(\sX,\lambda)$ consists of the germ of a hypersurface $\sH\subset \sX $  at $\Skel(\sX,\lambda)\cap \sH$ satisfying the following:
\begin{enumerate}
\item $\sH$ is regularly embedded; it has boundary and corners which lie in the boundary and corners of $\sX$, and we moreover assume that the collar coordinates for the corner structure of $\sH$ are given by restriction of that of $\sX$. 

\item $\sH$ is separating; dividing $\sX$ into two (possibly disconnected) halves $\sX_1$ and $\sX_2$.

\item A neighborhood of $\sU$ of  $\sH$ in $\sX$ is equipped with a splitting $\sU = \sN \times\sT^* (-\eps, \eps)$, such that   $
   \lambda|_{\sU}  = \lambda_\sN + sdt   
   $,   $(\sN, \lambda_\sN)$ is a W-block and
 $\sH = \{t=0\} \subset\sU$.  We call $\sN$ the nucleus of $\sH$.
     \end{enumerate}
     \end{definition}
  
Note in particular that a splitting hypersurface is invariant under the Liouville flow (hence conical). It is transverse to each boundary face $\sP\subset\sX$ and its nucleus $\sM$, and   $\sH\cap\sM$ is a splitting hypersurface for $\sM$.

\begin{definition}
The {\em splitting} of a  W-block $(\sX,\lambda)$ along a  splitting  hypersurface $\sH\subset \sX$  
is
the pair
of W-blocks $(\sX_1,\lambda_1)$, $(\sX_2,\lambda_2)$ obtained by removing $\sH$ from $\sX$ and then adding it back in to each component, so that
$\sX = \sX_1 \mathop{\cup }\limits_\sH\sX_2$, and $ \lambda|_{\sX_1} = \lambda_1$,
   $ \lambda|_{\sX_2} = \lambda_2$.

\end{definition} 
 
   \begin{figure}[h!]
 \centering
  \includegraphics[scale=0.3]{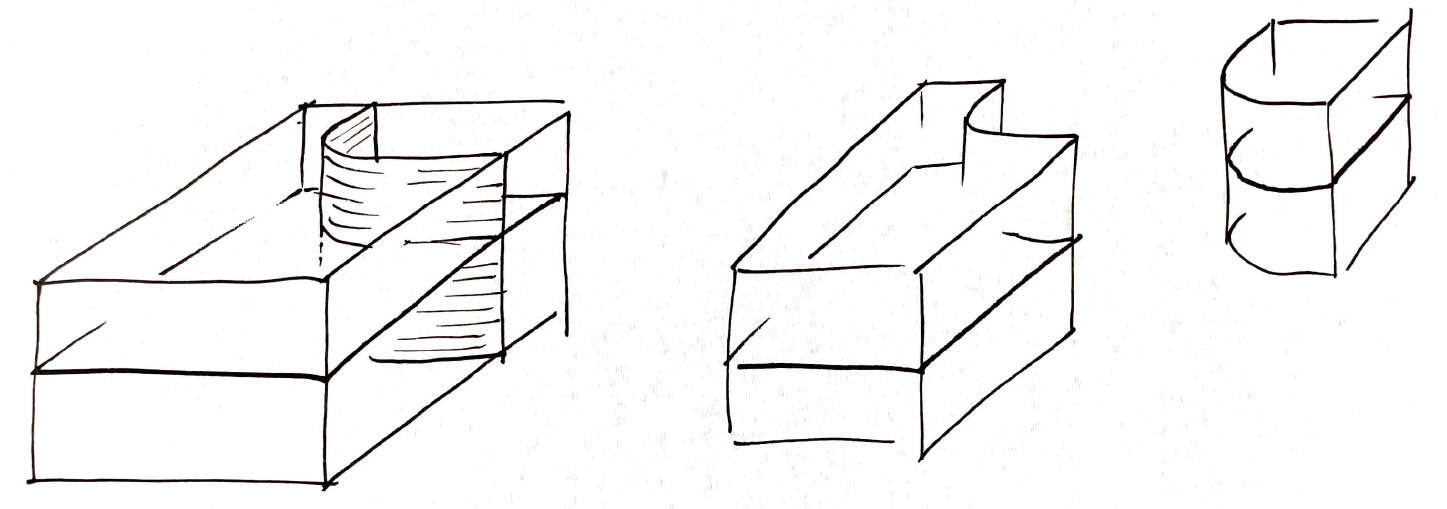}
 \caption{Splitting a W-block via a splitting hypersurface.}
 \label{splitting-hypersurface}
 \end{figure}

\begin{example}
Suppose $M$ is a smooth manifold with corners, and $N\subset M$ a regularly embedded smooth hypersurface with corners
dividing  $M$ into two parts $M_+$, $M_- $.
Assume the corner structure on $N$ is compatible with the corner structure on $M$, and that $N$ comes equipped with a product neighborhood $U = N \times (-\eps, \eps)$. 
Then   the splitting of the W-block $\sT^*M$ along the  splitting hypersurface $\sH = \sT^*M|_N$ results in  two W-blocks $\sT^* M_+$, $\sT^* M_-$. 
\end{example} 

More generally,   consider the following inductively constructed sequence of W-blocks  $$\sX=\sX_{\geq 1},\sX_1, \sX_{\geq 2} ,\sX_2  \dots, \sX_{\geq k}, \sX_k,$$
and dividing hypersurfaces $\sH_j\subset \sX_{\geq j}$, $j=1,\dots,k$.
\begin{itemize}
\item $\sH_1$ is  a splitting hypersurface in $ \sX$ which  divides  $\sX$ into two W-blocks $\sX_1 $ and $\sX_{\geq 2}$;
\item $\sH_2\subset \sX_{\geq 2}$ is a splitting hypersurface dividing $\sX_{\geq 2}$ into two W-blocks $\sX_{2} $ and $\sX_{\geq 3}$;
\item if for $0\leq j<k$  $\sH_j\subset \sX_{\geq j}$ is already defined  and divides  $\sX_{\geq j}$ into two W-blocks $\sX_{j}$ and $ \sX_{\geq j+1}$ then $\sH_{j+1}\subset \sX_{\geq j+1}$   is a splitting hypersurface  which divides $  \sX_{\geq j+1}$ into  two W-blocks $\sX_{j+1}$ and $ \sX_{\geq j+2}$.
\end{itemize}
We call the list  $[\sH_1, \ldots, \sH_k]$ a {\em dividing collection of hypersurfaces} for $\sX$ and refer to the presentation of $\sX$ as the result of  successive horizontal gluings,
$$\sX_{\geq k}=\sX_{\geq k+1}\mathop{\cup}\limits_{\sH_{k}}\sX_{k},\;\dots  \quad \sX=\sX_{\geq 1}=\sX_{ \geq 2 }\mathop{\cup}\limits_{\sH_{1}}\sX_{ 2},$$
an {\em admissible decomposition} of the W-block $\sX$.

\subsection{Smoothing corners of a W-block} \label{sec:smooth-corners-weinstein}
One can specialize  the procedure of smoothing corners  of a general manifold with corners described in Section \ref{sec:smooth-corners} to the case of W-blocks.
 
Let  $(\sX, \lambda)$ be  a W-block and 
$\bP=[\sP_1,\dots,\sP_k]$ an  admissible list of its facets and $\bN=[\sN_1,\dots,\sN_k]$   the corresponding list of their nuclei.
 Then there exists a W-block $(\sX^{  \frown,\bP}, \lambda^{ \frown,\bP})$ obtained by  successive smoothing corners between the facets in the list $\bP$.  To see this, recall along any $k$-face we have split collar neighborhoods
 $$(U_\sP, \lambda)  =  (\sN, \lambda_\sN) \times (\sT^*\cI^k, s dt) 
 $$ 
Then all smoothing respects these factorizations and is done with respect to the   collar coordinates $t$ on the factor $\cI^k$.
For example, for a proper cotangent block $(\sT^*M, pdq)$, whose faces correspond to faces of $M$, the smoothing gives the proper  cotangent block of the smoothing
$$
((\sT^*M)^{ \frown,\bP}, (pdq)^{\frown,\bP}) = \sT^*(M^{\frown,\bP}, u dv)
$$
The nucleus $\sN^{\frown. \bP}$ of the smoothed facet  $\sP^{\frown, \bP}$  is obtained by successive  horizontal gluing of $\sN_1$ to $\sN_2$ along the facet with the nucleus of the 2-face  $\sP_1\cap\sP_2$, then horizontally gluing the result to  (the smoothed) $\sP_3$, etc. 

  \begin{figure}[h!]
 \centering
  \includegraphics[scale=0.3]{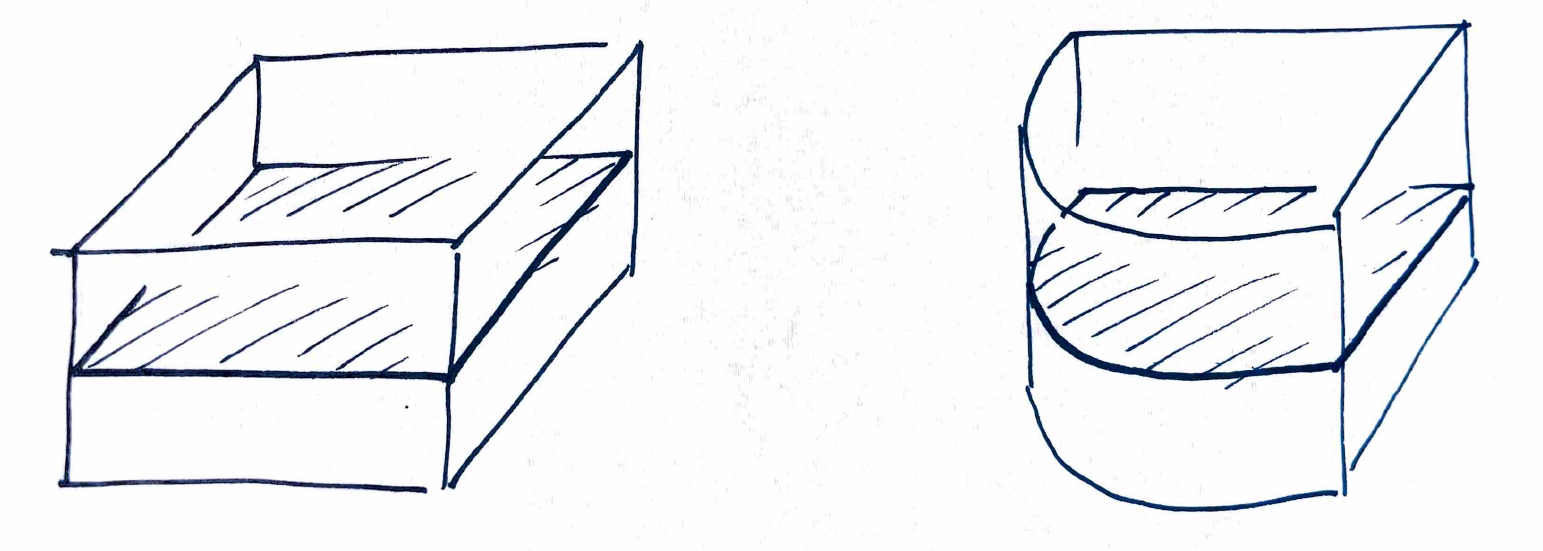}
 \caption{Smoothing corners of a W-block.}
 \label{smoothing-corner-block}
 \end{figure}

\begin{remark} It is straightforward to check  If $\bP'=[\sP_1,\dots,\sP_\ell],\; \ell<k$, is an initial sublist of $\bP =[\sP_1,\dots,\sP_k]$ 
then $\sX^{\frown,\bP}=(\sX^{\frown,\bP'})^{\frown,\bP/\bP'}$, where  we denote by $\bP/\bP'$ the list $[\sP^{\frown, \bP'},\sP_{\ell+1},\dots,\sP_k]$. 
\end{remark}

\subsection{Creating corners of a W-block}
Let us review the inverse operation  that introduces new corners, and which is also the specialization of an analogous operations for smooth manifolds with corners.

Let $(\sX,\lambda)$ be a W-block and $\sP\subset\sX$ its facet with a nucleus $\sN$. Consider the splitting $\Op_{\sX}\sN=\sN\times\sT^*\cI$, so that we have $\lambda|_{\Op\sN}=\lambda_\sN+s_1dt_1$, for $t_1$  the corresponding   collar coordinate near $\sP$, and so that $\sP=\{t_1=0\}$ and $\sX$ near $\sN$ is given by the inequality $t_1\geq 0$.
Let $\sH\subset\sN$ be a splitting hypersurface with its nucleus $\sM$ which splits $\sN$ into two blocks $\sN_1$ and $\sN_2$ horizontally glued along their common face $\sH$ with nucleus $\sM$.  
The form $\lambda|_{\Op_\sX\sM}$ can be written as $\lambda_\sM+s_1dt_1+s_2dt_2$, where  $\sH=\{t_1=s_1=t_2=0\}$, $\sN_1= \{t_1=s_1=0,t_2\geq 0\}$ and $\sN_2= \{t_1=s_1=0,t_2\leq 0\}$. For a small $\sigma>0$ consider a  $C^\infty$-function $\eta:\RR\to[ -\sigma,0]$ which is equal $|t|-\sigma$  for $|t|<\frac\sigma2$ and equal to $0$ for $|t|\geq\sigma$. Then by replacing $ \sX\cap \Op_\sX\sM$ by the domain
$\{t_1\geq \eta(t_2)\}$ we transform $\sM$ into a nucleus of a new  2-face $\sQ$ of a resulting W-block denoted by $\sX^{\wedge, \sH}$ adjacent to new facets $\sP_1$ and $\sP_2$ which replaced $\sP$. Gluing horizontally nuclei $\sN_1$ and $\sN_2$ along their common facet $\sH$ gives back the nucleus $\sN$ of the original facet $\sP$. 


More generally, given a splitting list $\bH=[\sH_1,\dots,\sH_k]$ for the nucleus  $\sN$ of a facet $\sP\subset\sX$, i.e. an admissible decomposition of $\sN$ which splits it as
$$\sN=( \sN_{\geq k+1}\mathop{\cup}\limits_{\sH_{k}}\sN_{ k})\mathop{\cup}\limits_{\sH_{k-1}}\sN_{k-1})\dots)\mathop{\cup}\limits_{\sH_{1}}\sN_{ 1},$$ we  can modify the W-block $\sX$ into a new block $\sX^{\wedge,\bH}$ which has facets $\sP_{\geq k+1}, \sP_{k}, \sP_{k-1},\dots,\sP_{2},\sP_1$ with nuclei   $\sN_{\geq k+1}, \sN_{k},\sN_{k-1},\dots,\sN_2,\sN_1$ which replace the facet $\sP\subset\sX$. Note that the list $\bP=[\sP_{\geq k+1}, \sP_k,\sP_{k-1},\dots,\sP_2,\sP_1]$ is admissible by smoothing corners in $\bP$, i.e. taking
$(\sX^{\wedge,\bH})^{\frown,\bP}$ we get back a block strongly deformation equivalent to $\sX$.
 
%

We say that the W-block  $\sX^{\wedge,\bH}$ is obtained from $\sX$ by {\em corner ramification}. Thus, {\em any W-block is the result of corner ramification of a W-block with a smooth boundary.}
 
  \begin{figure}[h!]
 \centering
  \includegraphics[scale=0.3]{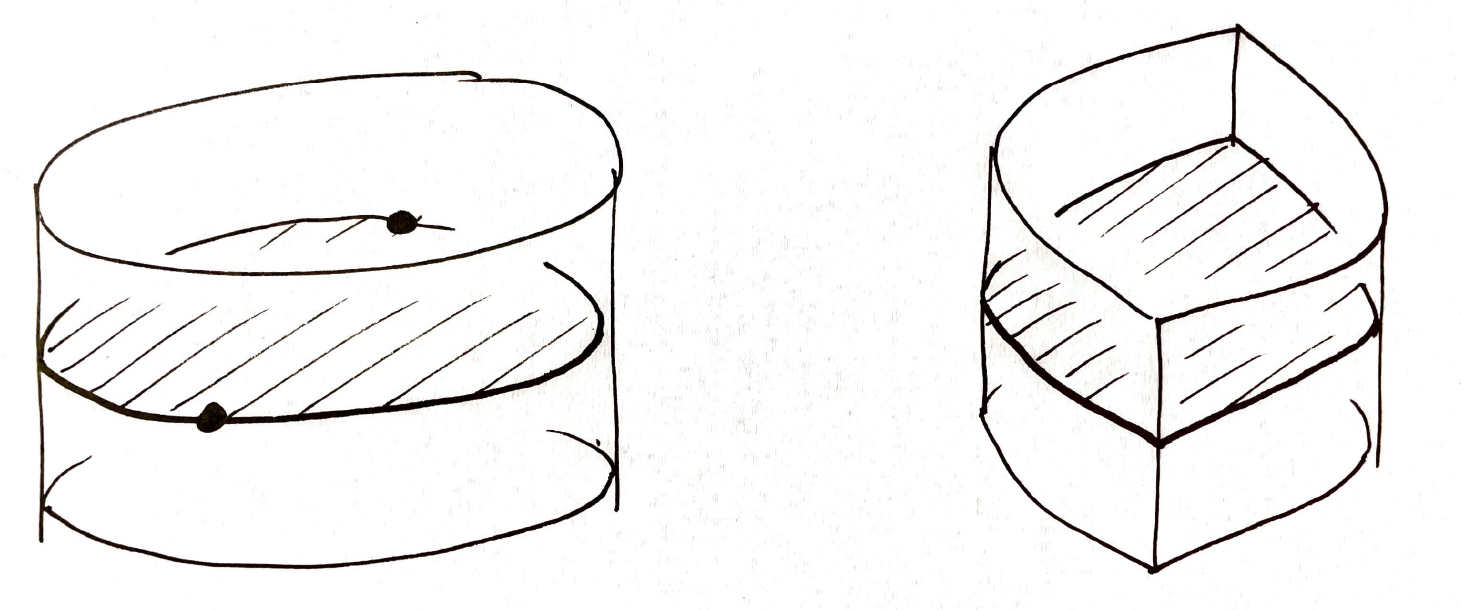}
 \caption{Creating corners.}
 \label{corner-creation}
 \end{figure}


\subsection{Legendrian and W-hypersurfaces in  W-blocks}\label{sec:Legendrian}
Recall that the  ideal boundary  of a W-block  $(\sX,\lambda)$ is the cooriented contact manifold with corners $( \partial_\infty \sX ,\xi_\infty) \simeq \big(\sX \setminus \Skel(\sX, \lambda) , \ker(\lambda) \big)/\RR$, where  the $\RR$-action  is by the flow of $Z$. 
For any defining domain $W\subset \sX$ we have a canonical contactomorphism  $( \partial_v W,\ker(\lambda|_{\p_v W} ) ) \to ( \partial_\infty \sX, \xi_\infty)$.  Note the contact form $\lambda|_{\partial_v W}$ depends on the choice of $W$ and hence is not well-defined at the level of germs. The ideal boundary $\p_\infty \sX$   inherits a natural corner structure from $\sX$. Moreover, the $k$-faces $\sP\subset  \sX$ correspond to (possibly disjoint unions of) $k$-faces $\partial_\infty \sP \subset \p_\infty \sX$.

\subsubsection{Preliminaries}

By a {\em conic} subset $S\subset \sX$, we will mean a subset  invariant under  the local $\RR$-action  by the flow of $Z$. 
Given a  conic subset $S\subset \sX$, its {\em ideal boundary} $\partial_\infty S \subset \partial_\infty \sX$ is defined to be 
$ \partial_\infty S = \big(S\cap  (\sX  \setminus   \Skel(\sX, \lambda)) \big)/\RR$. 

Conversely, given $S \subset  \partial_\infty \sX$, its {\em Liouville cone}
$C_\lambda(S) =  \Cone(S,\lambda) \subset \sX$ is defined to be
the pre-image of $S$ in $\sX \setminus \Skel(\sX, \lambda)$. By construction, $C_\lambda(S)$ is a $Z$-conic subset disjoint from $\Skel(\sX,\lambda)$ whose ideal boundary is $S$. Conversely, any $Z$-conic subset in $\sX$ disjoint from $\Skel(\sX,\lambda)$ is of the form $C_\lambda(S)$ for some subset $S \subset \Skel(\sX,\lambda)$. 


To speak about general (not necessarily conic) subsets of $\sX$, in particular of $\sX  \setminus   \Skel(\sX, \lambda)$, requires some care since   $\sX$ is only defined to be a germ of a defining domain $W$ along the skeleton    $ \Skel(\sX, \lambda)$. Therefore, by a  subset $S\subset \sX$, we will always mean a subset $S \subset W$  of some defining domain $W$. 
In practice, the only non-conic subsets we will deal with are subsets of $\sX \setminus \Skel(\sX,\lambda)$ which are lifts of subsets in $\p_\infty \sX$ (and are therefore equivalent to conic subsets disjoint from the skeleton) in the following sense:

We will say that a submanifold $T \subset \sX \setminus \Skel(\sX,\lambda)$ is a  {\em lift}, of a submanifold $S \subset \partial_\infty \sX$ if $T \subset \partial_v W$ for some defining domain $W$ and the map $\partial_v W \to \partial_\infty \sX$ restricts to a diffeomorphism $T \to S$. In particular each Liouville trajectory intersects $T$ at most once. 

\subsubsection{Legendrians in W-blocks}
  
     We introduce Legendrians in W-blocks and related notions. Roughly speaking, for a W-block $\sX$, Legendrians $\Lambda$ live in the ideal contact boundary $\p_\infty \sX$, and may be {\em represented} by Legendrian embeddings $\Lambda \hookrightarrow \sX \setminus \Skel(\sX)$, where the shorthand $\Lambda \subset \sX$ will always mean {\em in the complement of the skeleton}. We now give the precise definition, which we formulate for Legendrians but can be similarly formulated for isotropics. 
  
    \begin{figure}[h!]
 \centering
  \includegraphics[scale=0.3]{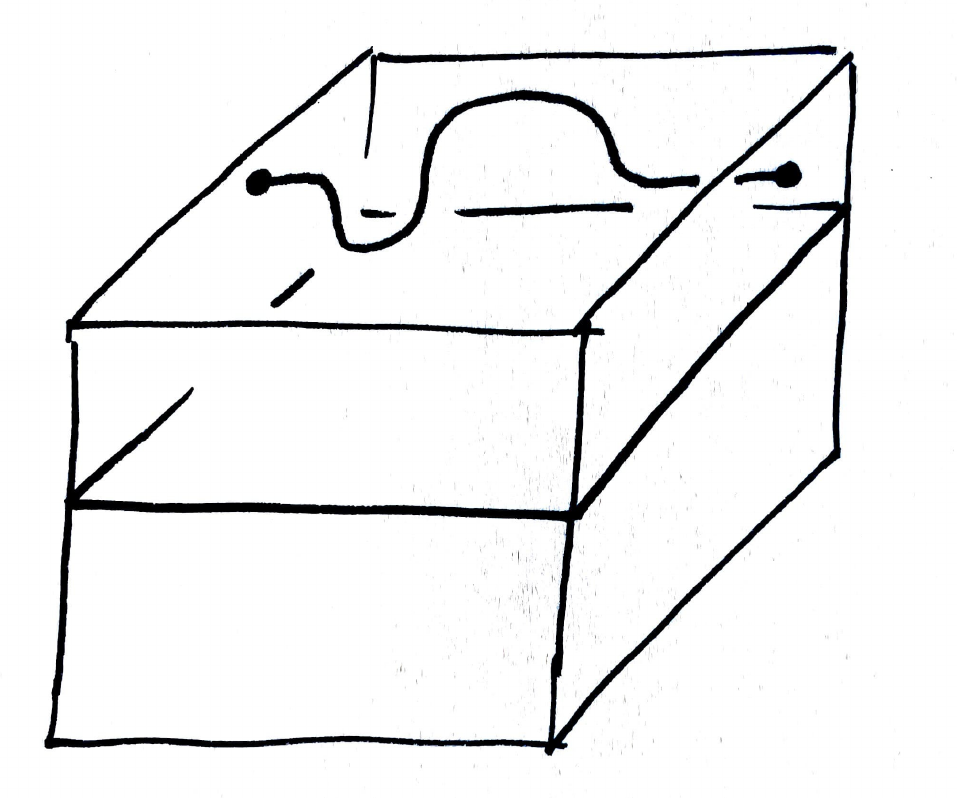}
 \caption{A Legendrian.}
 \label{legendrian}
 \end{figure}

 \begin{definition}\label{def:adLeg}
 Let $(\sX,\lambda)$ be a $2n$-dimensional W-block.

\begin{enumerate}
\item
 By a {\em Legendrian} in $(\sX,\lambda)$, we will mean an isotropic  embedding
$j_\infty:\Lambda \hookrightarrow  ( \p_\infty \sX, \ker(\lambda) )$
of an $(n-1)$-dimensional compact manifold in the ideal boundary.
 
\item By a {\em representative} of $j_\infty:\Lambda \hookrightarrow \p_\infty \sX$, we will mean  the data of a defining domain $W\subset \sX$ and 
 an isotropic  embedding
$j:\Lambda \hookrightarrow \p_v W \subset \sX$ such that $ j_\infty = j$ under $\partial_\infty \sX = \partial_v W$.  We will typically identify $\Lambda$ with its image $j_\infty(\Lambda)\subset  \p_\infty \sX$, or often its image $j(\Lambda)\subset 
 \p_v W \subset \sX$ when no ambiguity is possible. We will often write $\Lambda \subset \sX \setminus \Skel(\sX,\lambda)$, or even $\Lambda \subset \sX$, to denote a representative of a Legendrian.
 
 \item 
For a Legendrian $\Lambda \hookrightarrow \partial_\infty \sX$,  we will always assume that it is {\em adapted}, which means that for each  $k$-face $\sP$ of $\sX$, under  its
collar neighborhood
 $$(U_\sP, \lambda)  =  (\sN, \lambda_\sN) \times (\sT^*\cI^k, s dt) 
 $$ 
  we have $C(\Lambda) \cap U_\sP = C(\Lambda_\sN) \times \cI^k$, where $\Lambda_\sN$ is a Legendrian in  the nucleus $\sN$, and the coordinate $\cI^k$ gives the collar structure for $\Lambda$.
  \item 
For a representative $\Lambda \hookrightarrow \partial_v W$ of $\Lambda$, we also will always assume that it is {\em adapted} which means that
  for each  $k$-face $\sP\subset \sX$, under  
its  split collar neighborhood
 $$(U_\sP, \lambda)  =  (\sN, \lambda_\sN) \times (\sT^*\cI^k, s dt) 
 $$ 
 we have $\Lambda \cap U_\sP = \Lambda_\sN \times \cI^k$, where $\Lambda_\sN \subset \sN$ is a representative of a Legendrian in $\partial_\infty \sN$, and the coordinate $\cI^k$ gives the collar structure for $\Lambda$.
 
 \item  We will moreover impose the condition $j_\infty (\text{int} (\Lambda) )  \subset \text{int}(  \p_\infty \sX)$ and for each 1-face $P$ of $\Lambda$ demand that one of the following two conditions hold:
 \begin{enumerate}
 \item[ (A)] $j_\infty (P) \subset \p_1 \p_\infty \sX$, say inside a 1-face of $\p_\infty \sX$ which corresponds to a 1-face  $\sQ$ of $ \sX$ with nucleus, $\sN$, and $j_\infty|_P: P \to \p_\infty \sN$ is a Legendrian embedding such that on the collar $U_\sQ = \sN \times \sT^*\cI$ the restriction of $j_\infty$ to the collar $P \times \cI$ has the form 
 $$ P \times \cI \xrightarrow{j_\infty|_P \times \text{id} } \p_\infty \sN \times \cI \hookrightarrow  \p_\infty (\sN \times \sT^*\cI)$$ 
  \item[(B)] $j_\infty|_P$ is an embedding $(P,\p P) \to ( \p_\infty \sX,  \p_1 \p_\infty X)$ transverse to $\p_1 \p_\infty \sX$, and $j_\infty(\text{int}(P)) \subset \text{int}(\p_\infty \sX)$. 
\end{enumerate}
Faces $P$ of $\Lambda$ for which property (A) holds are called {\em pure}. If all 1-faces of $\Lambda$ are pure then $\Lambda$ is called pure.

 \item  For a representative $j:\Lambda \hookrightarrow \sX \setminus \Skel(\sX,\lambda)$ of a Legendrian in $\p_\infty \sX$, we analogously impose properties  (A) and (B) as follows. 
 \begin{enumerate}
 \item[ (A)] $j(P) \subset \p_1 \sX$, say inside a 1-face $\sQ$ of $\sX$ with nucleus $\sN$, and $j|_P: P \to \sN  \setminus \Skel(\sN,\lambda|_\sN)$ is a Legendrian embedding such that on the collar $U_\sQ = \sN \times \sT^*\cI$ the restriction of $j$ to the collar $P \times \cI$ has the form 
  $$ P \times \cI \xrightarrow{j|_P \times \iota }  \sN \times \sT^*\cI$$ where $\iota : \cI  \hookrightarrow \sT^*\cI$ is the inclusion of the zero section. 
  \item[(B)] $j|_P$ is an embedding $(P,\p P) \to ( \sX,  \p_1  \sX)$ transverse to $\p \sX$, and $j(\text{int}(P)) \subset \text{int}(\sX)$. 
\end{enumerate}
Faces $P$ of $\Lambda$ for which property (A) holds are called {\em pure}. If all 1-faces of $\Lambda$ are pure then the Legendrian $\Lambda$ itself is called pure. 
\end{enumerate}

  \begin{figure}[h!]
 \centering
  \includegraphics[scale=0.3]{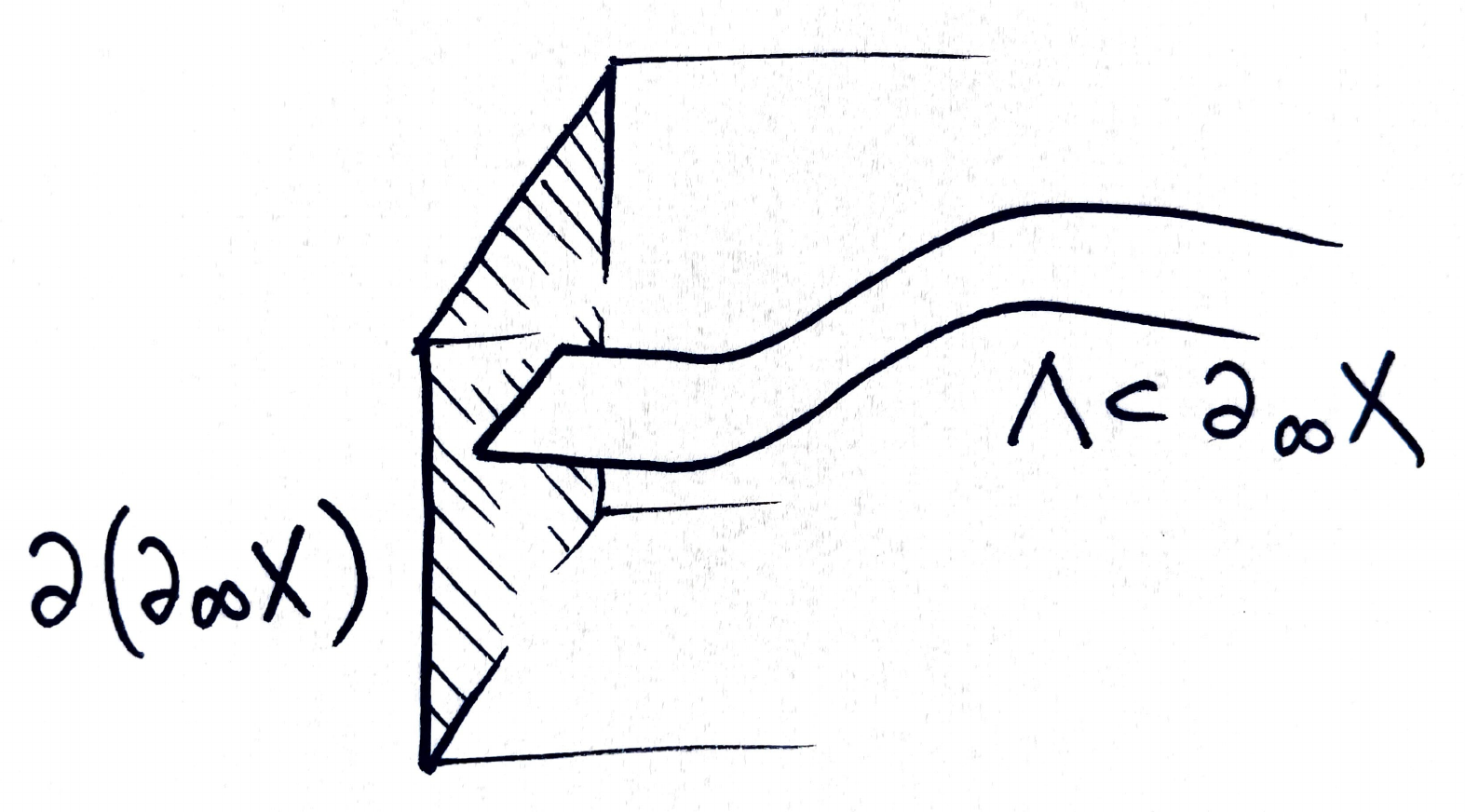}
 \caption{An example of a non-pure Legendrian. Some of the 1-faces of $\Lambda$ are contained in $\partial(\partial_\infty X)$ and some are not.}
 \label{purity}
 \end{figure}
%
%

\end{definition}

Note   an isotropic, or  specifically Legendrian, $\Lambda$  in $\p_\infty \sX$ is the same as a conic isotropic, or specifically Lagrangian, embedding $\Lambda \times \RR \subset  \sX\setminus\Skel(\sX, \lambda)$ where the $\RR$ direction parametrizes the Liouville flow.
Note also any two representatives of a Legendrian are equivalent.

\begin{example}
Suppose $M$ is a smooth compact manifold with boundary and corners, and $N\subset M$ a regularly embedded smooth submanifold.
Assume for each  $k$-face $P\subset M$,  with its   collar neighborhood
 $U_P = P \times \cI^k
 $ 
we have $N \cap U_P = (N \cap P) \times \cI^k$ and that this induces the corner structure on $N$ by restriction of collar coordinates. Then  the unit conormal bundle $S^*_N M \cap W$ is a Legendrian in $\sT^*M$.

\end{example}

%
%
%
%


 \subsubsection{W-hypersurfaces in W-blocks}\label{sec:W-hyp}
 Weinstein hypersurfaces in Weinstein manifolds
 were introduced  in \cite{E18}, and   are special cases of the Liouville hypersurfaces introduced by Avdek in \cite{A12}. The notion plays a similar role to the ``stops" of Sylvan, \cite{S16} and  the Liouville sectors of Ganatra-Pardon-Shende, \cite{GPS17}. Related constructions are also considered by    Ekholm-Lekili \cite{EL17}.  
 
 We will consider a natural generalization of this notion to the setting of W-blocks, which we call W-hypersurfaces.  As with Legendrians in W-blocks, we will think of these objects as lifts of W-blocks from the ideal boundary. To this end, 
observe that if $(\sA,\lambda_{\sA})$ is    a W-block, with Liouville field $Z_\sA$,  and $f:\sA\to \RR_{>0}$ is a   smooth function, then $(\sA,e^f\lambda_{\sA})$ is also a W-block if  and only if $  df(Z_{\sA})>-1$ (see  Lemma 12.1 in \cite{CE12}).  The space $C(\lambda_{\sA})$ of functions that satisfy this condition is a convex cone, and hence contractible.

  \begin{definition} \label{def:W-hyp}   Let $(\sX,\lambda)$ be a $2n$-dimensional W-block.
\begin{enumerate}

\item A {\em W-hypersurface} in $(\sX,\lambda_{\sX})$ is a $(2n-2)$-dimensional W-block $(\sA,\lambda_{\sA})$ with a smooth  embedding  $j_\infty:\sA\hookrightarrow \p_\infty \sX $   such that  $j^*\alpha_\infty=\lambda_{\sA}$, for some contact form $\alpha_\infty$ for the cooriented contact structure $\xi_\infty$. We assume the embedding is proper in that the boundary and corners of $\sA$ are mapped by $j_\infty$ into the boundary and corners of $\p_\infty \sX$ and the corner structure of $\p_\infty \sX$ induces that of $\sA$ via pullback.

\item  A {\em  representative} of $j_\infty:\sA\hookrightarrow \p_\infty \sX $ is   the data of a defining domain $W\subset \sX$ and 
 an  embedding
$j:\sA \hookrightarrow \p_v W \subset \sX$ such that $ j_\infty = j$ under the canonical identification $\partial_\infty \sX = \partial_v W$, again compatible with the corner structures,
and such that  $j^*\lambda=f\lambda_{\sA}$,  for some $f\in C(\lambda_{\sA})$. We will typically identify $\sA$ with its image $j_\infty(\sA)\subset  \p_\infty \sX$, or often its image $j(\sA)\subset 
 \p_v W \subset \sX$ when no ambiguity is possible.

\item A W-hypersurface $(\sA,\lambda_\sA)$ in $\partial_\infty \sX$ is always assumed {\em adapted}, which means that
   each  $k$-face $\sQ$  of  $\sA$, with nucleus $\sM$,  is the  intersection $\sQ = \sA \cap \p_\infty \sP$ with $\sP\subset \sX$ a $k$-face with nucleus $\sN$,
   such that under  
the  split collar neighborhood
 $$(U_\sP, \lambda)  =  (\sN, \lambda_\sN) \times (\sT^*\cI^k, s dt) 
 $$ 
 we have a splitting
 $$C( \sA ) \cap U_\sP = C(\sM ) \times \sT^*\cI^k. $$
\item A representative $\sA \subset \partial_v W$ of a W-hypersurface $(\sA,\lambda_\sA)$ in $\partial_\infty \sX$ is also  assumed {\em adapted}, which means that 
   each  $k$-face $\sQ\subset \sA$, with nucleus $\sM$,  is the  intersection $\sQ = \sA \cap \sP$ with a $k$-face $\sP\subset \sX$, with nucleus $\sN$,
   such that under  
the  split collar neighborhood
 $$(U_\sP, \lambda)  =  (\sN, \lambda_\sN) \times (\sT^*\cI^k, s dt) 
 $$ 
 we have the splitting
 $$ ( \sA \cap U_\sP , \lambda|_{\sA \cap U_\sP} )= ( \sM, \lambda_\sM) \times  (\sT^*\cI^k, sdt)$$ with $\sM = \sA \cap \sN$ and $\lambda_\sM = \lambda_\sN |_{\sM}$.

\end{enumerate}

 \end{definition}

\begin{remark}1.  Any embedding $j:\sA\hookrightarrow  \sX\setminus\Skel(\sX,\lambda)$
which lifts $ j_\infty $ over the projection $ \sX\setminus\Skel(\sX,\lambda) \to \partial_\infty \sX$,
and such that  $j^*\lambda=f\lambda_{\sA}$,  for some $f\in C(\lambda_{\sA})$, is contained in the vertical boundary $\p_vW$ of some defining domain $W\subset \sX$. 

2. Conditions (iii) and (iv) imply that coordinates $t_j$ in collar neighborhoods  (i.e. distances to a facet $\sP$ of $\sX$ and the corresponding facet $\sQ=\sA\cap\sP$ of $\sA$) coincide.
\end{remark}

\begin{example}
Suppose $M$ is a smooth compact manifold, and $N\subset M$ a regularly embedded  smooth hypersurface with coorientation
 $\sigma \subset S^*_N M$ in the cosphere bundle.
Assume for each  $k$-face $P\subset M$,   
with  collar neighborhood
 $U_P = P \times \cI^k
 $ 
we have that $N \cap U_P = (N \cap P) \times \cI^k$ and the collar coordinates of $N$ are obtained from those of $P$ by restriction. A choice of Riemannian metric embeds the cosphere bundle $S^*M$ in $T^*M$ as the unit cosphere bundle. Then  the coset $\sigma + \sT^*N \subset  \sT^*M$ is a representative of a W-hypersurface in $\sT^*M$ with skeleton equal to $\sigma$.

\end{example}

  \begin{example}[Ribbons of Legendrians]\label{ex:ribbon} An important class of W-hypersurfaces  consists of {\em ribbons} of Legendrians.  Let $j:\Lambda \hookrightarrow \partial_v W$  be   a representative of a Legendrian. 
  Suppose first  that $j:\Lambda \hookrightarrow \partial_v W$  is pure in the sense of Definition~\ref{def:adLeg}.
   Then  $j$ extends to a  W-hypersurface $\wh j :\sT^*\Lambda\hookrightarrow \partial_v W $ which we refer to as a {\em ribbon} of $\Lambda$. One may similarly define the ribbon of a non-pure Legendrian; we will do so in Remark \ref{rem: stripping not pure} after introducing the notion of stripping faces.  In either case, the space of all   ribbons of a  given Legendrian is contractible. 
   \end{example}
   
   The next result states that any strong homotopy of a W-hypersurface may be realized by an isotopy of any given representative. 
   
 \begin{proposition}\label{lm:strong-hom-hyper} 
    Let $(\sY,\mu_t)$, $t\in[0,1]$, be a strong homotopy of $W$-blocks. 
    Suppose the W-block $(\sY,\lambda_0) $ is realized as a  W-hypersurface  in the vertical boundary of a defining domain $W$ for a W-block $(\sX,\lambda)$.
   Then there exists an isotopy   $h_t:\sY\to\p_vW$, $t\in[0,1]$, starting from the  inclusion $h_0:\sY\hookrightarrow \p_vW$, such that for all $t\in[0,1]$
    \begin{itemize}
    \item $h_t|_{\Skel(\sY,\lambda_0)}=h_0|_{\Skel(\sY,\lambda_0)}$;
    \item  $h_t^*\lambda=\mu_t$;
    \item $(h_t(\sY),\mu_t)$ is a representative of a W-hypersurface in $\p_vW$.
    \end{itemize}
     \end{proposition}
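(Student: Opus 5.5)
The plan is a relative Moser argument carried out inside the contact manifold $\p_vW$, in the form of a contact-type isotopy of $\sY$.

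\textbf{Reduction to an exact family with fixed symplectic form.} First use the remark after the definition of homotopies of Liouville structures: since the homotopy is strong, the skeleton $L=\Skel(\sY,\mu_t)$ is constant. After composing with an isotopy of $\sY$ fixed on $L$, we may assume $d\mu_t=d\mu_0$ near $L$. We may further assume $\mu_t=\mu_0+dH_t$, where $H_t$ is supported near $L$, vanishes to first order on $L$, and is split near the boundary faces.

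For that last point, note that $\mu_t-\mu_0$ is closed and vanishes on $TL$, since both forms vanish on the isotropic skeleton, which is conic for both. Hence it is exact on a neighborhood retracting to $L$, with a primitive $H_t$ vanishing on $L$. We should also check $dH_t|_L=0$; otherwise modify $H_t$ using the fact that the Liouville fields agree on $L$, where both vanish on the zeros and are tangent elsewhere.

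The face conditions are handled inductively on nuclei. A strong homotopy induces strong homotopies of the nuclei. We first construct the isotopy on the nuclei $\sM$ of faces of $\sY$ (lower dimension, by induction), then extend by products with $\sT^*\cI^k$ in the collars, which preserves adaptedness.

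\textbf{Contact extension.} The representative $j_0:\sY\hookrightarrow\p_vW$ satisfies $j_0^*\lambda=f_0\mu_0$ with $f_0\in C(\mu_0)$; after rescaling by a conic factor we may assume $f_0\equiv1$ near $L$. We seek $h_t$ with $h_t^*\lambda=\mu_t$. Write the vector field $V_t=\dot h_t\circ h_t^{-1}$ on $h_t(\sY)$ and extend it to a contact vector field on $\p_vW$ with contact Hamiltonian $K_t=\lambda(V_t)$.

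The equation $\frac{d}{dt}h_t^*\lambda=\dot\mu_t=d\dot H_t$ becomes, on $\sY$,
\[
h_t^*\bigl(\iota_{V_t}d\lambda+d(\lambda(V_t))+\rho_t\lambda\bigr)=d\dot H_t,
\]
where $\rho_t$ is the conformal factor of the contact flow. It suffices to solve this tangentially along $\sY$. Choose $V_t$ tangent to $h_t(\sY)$, equal to the $d\mu_t$-dual Moser field $v_t$ defined by $\iota_{v_t}d\mu_t=-\,\dot H_t$-corrected terms. Concretely, take $v_t$ with $\iota_{v_t}d\mu_t+d(\mu_t(v_t))=d\dot H_t$, for instance $v_t=X_{\dot H_t}$-type with $\mu_t(v_t)=\dot H_t$. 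Because $\dot H_t$ and $d\dot H_t$ vanish on $L$, we get $v_t=0$ on $L$; this gives the first bullet.

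Then extend $v_t$ to a contact vector field on a neighborhood in $\p_vW$ with Hamiltonian $\dot H_t$ along $\sY$. Its flow stays in $\p_vW$, has compact support near $h_t(\Skel)$ cone, and preserves the corner collars because everything is split.

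\textbf{Main obstacle.} The key difficulty is that the Moser field $v_t$ lives on the germ $\sY$, which is only defined near $L$. We need $h_t$ to be a genuine embedding of a fixed defining domain of $\sY$ for all $t\in[0,1]$, and $h_t^*\lambda$ to be of the form $f\mu_t$ with $f\in C(\mu_t)$ where we cannot achieve equality.

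I would handle this by taking $H_t$ to be cut off conically. Equality $h_t^*\lambda=\mu_t$ is arranged on a small defining domain $W_\sY$. Outside it, the mismatch is absorbed into a conformal factor, and $df(Z)>-1$ is checked using the cone structure, namely that both $\mu_t$ and $\mu_0$ are Liouville with common skeleton. Compactness of $[0,1]$ then lets us choose one defining domain uniformly, which gives the third bullet.
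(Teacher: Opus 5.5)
There is a genuine gap in your ``contact extension'' step: neither mechanism you use there can produce $h_t^*\lambda=\mu_t$.

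If $V_t$ is tangent to $h_t(\sY)$, then $h_t=h_0\circ\psi_t$ for diffeomorphisms $\psi_t$ of $\sY$. Then $h_t^*\lambda=\psi_t^*\mu_0$, so you would be proving that every strong homotopy is trivial up to diffeomorphism, which is false. For instance, on the germ of $\R^2$ at the origin take $\mu_s=s\,p\,dq-(1-s)\,q\,dp$, $s\in[\frac13,\frac12]$. This is a strong homotopy (potential $\frac12(p^2+q^2)$, skeleton $\{0\}$) with $d\mu_s=dp\wedge dq$ and $\mu_s-\mu_{1/2}=(s-\frac12)\,d(pq)$, so every normalization in your first step holds. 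But a diffeomorphism $\psi$ with $\psi^*\mu_{1/3}=\mu_{1/2}$ would conjugate the Liouville fields $s\,p\,\partial_p+(1-s)\,q\,\partial_q$. Their linearizations at the origin have eigenvalues $\{\frac12,\frac12\}$ and $\{\frac13,\frac23\}$, so no such $\psi$ exists. Equivalently, your tangential Moser equation is a cohomological equation along $Z_t$, which is generally obstructed at the zeros of $Z_t$.

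If instead $h_t=\Phi_t\circ h_0$ with $\Phi_t$ the flow of a contact vector field on $\p_vW$, then $\Phi_t^*\lambda=e^{g_t}\lambda$. Hence $h_t^*\lambda=e^{g_t\circ h_0}\mu_0$ is only a conformal rescaling of $\mu_0$, and in the example $\mu_{1/3}$ is not a pointwise multiple of $\mu_{1/2}$. Your displayed equation also double counts: for a contact field, $\iota_Vd\lambda+d(\lambda(V))$ already equals $\rho\lambda$. The conformal-factor discussion in your last paragraph does not address any of this.

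The missing idea is to push $\sY$ off itself in the Reeb direction by a map that is \emph{not} a contactomorphism. Near the skeleton, $\lambda|_{\p_vW}$ is the contactization form $dz+\mu_0$ on $\sY\times(-\delta,\delta)$, with $z$ the Reeb coordinate and $\sY=\{z=0\}$. This uses $h_0^*\lambda=\mu_0$, which the second bullet at $t=0$ forces into the hypothesis. Assume $d\mu_t=d\mu_0$ (the paper simply takes this for granted). As in your first step, write $\mu_t=\mu_0+dg_t$ with $g_t|_{\Skel(\sY)}=0$, and set $h_t(y)=(y,g_t(y))$. Then:
\begin{itemize}
\item $h_t^*(dz+\mu_0)=\mu_t$ exactly;
\item $h_t$ agrees with $h_0$ on the skeleton;
\item after shrinking the defining domain of $\sY$ the graph stays in the chart, so the third bullet holds with $f\equiv1$.
\end{itemize}
This is the paper's proof. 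It needs only $g_t=0$ on $\Skel(\sY)$. The first-order vanishing you ask for fails in general, because $dg_t=\mu_t-\mu_0=\iota_{Z_t-Z_0}d\mu_0$ is nonzero at points of the skeleton where $Z_t\neq Z_0$. The Liouville fields need not agree on the skeleton, contrary to what you assert.
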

    \begin{proof}  The contact form $\alpha|_{\Op(\Skel(Y,\mu_0))}$ has the form $dz+\mu_0$, where $z$ is the Reeb flow coordinate normalized to be $0$ on $\sY$. 
    As all blocks  $(\sY, \mu_t)$ have the same skeleta, and $d\mu_t=d\mu_0$ for all $t\in[0,1]$ we conclude that $\mu_t=\mu_0+dg_t$, where $g_t|_{\Skel(\sY,\mu)}=0$, $t\in[0,1]$. Then the required isotopy $h_t$ can be defined 
 by the formula $$(y,0) \mathop{\mapsto}\limits^{h_t}  (y,g_t(y)), \;y\in\sY.$$ 
 which was to be proved.   \end{proof}

 \subsection{Modifications of W-blocks near their boundary faces}
  \label{sec:boundaries-adjust}
  We discuss   in this section modifications of W-blocks near their boundaries necessary for making buildings. Let us begin with the following remark.
  \begin{remark}\label{rem:shrink-extend}  
   Let $(\sX,\lambda)$ be a W-block. Given a face $\sP$ of $\sX$ with the nucleus $\sN$ there is by definition a neighborhood  of $\sN$ in $\sX$ split as $(\sN\times\sT^*[0,\delta), \lambda_{\sN}\oplus sdt)$. Sometimes it is convenient to shrink $\sX$, say by replacing $\sT^*[0,\delta)$ by $\sT^*[\delta/2,\delta)$, or extend $\sX$ by replacing $\sT^*[0,\delta)$ by $\sT^*[-\delta,\delta)$, see the discussion in Section \ref{sec: collar ext/rest}.  As there exist obvious Liouville isomorphisms between the original, shrinked, or expanded versions of $\sX$,  we will  in most cases  not distinguish between these manifolds, and slightly abusing the terminology may address the homotopy of the extended W-block structure  as a  homotopy of the original one.
\end{remark}

  

\subsubsection{Basic models}

Consider 
the complex line $\C$ with coordinate $s+it$.

Given a  smooth strictly  subharmonic function $\phi:\C\to\RR$, i.e.~a function with $\Delta\phi=\phi_{ss}+\phi_{tt}>0$,
we have a Liouville form $\mu_\phi = d^\C\phi=\phi_sdt-\phi_tds$, the corresponding symplectic form $\om_\phi=\Delta_\phi ds\wedge dt,$ 
and the  corresponding Liouville  field $Y_\phi=\frac{1}{\Delta_\phi}( \phi_s\frac{\p}{\p s}+\phi_t\frac{\p}{\p t})$. Note $Y_\phi$  is the gradient of $\phi$ with respect to the metric $g_\phi=\Delta_\phi(ds^2+dt^2)$.

\begin{example}\label{ex:special-adj}

First, here are some simple examples of the above setup.

\begin{itemize}
\item[ (a)] $\phi=\frac{s^2}2;\; \mu_\phi=sdt,\;\om_\phi=ds\wedge dt,\;  Y_\phi=s\frac{\p}{\p s}.$
\item[(b)] $\phi=\frac{t^2}2;\; \mu_\phi=-tds,\;\om_\phi=ds\wedge dt,\;  Y_\phi=t\frac{\p}{\p t}.$
\item[(c)] $\phi=\frac{s^2}2-\frac{t^2}4;\; \mu_\phi= sdt+\frac{tds}2 ,\;\om_\phi=\frac12ds\wedge dt,\;  Y_\phi= s\frac{\p}{\p s}-\frac{t}2\frac{\p}{\p t}.$
\end{itemize}

Next, here are some  useful interpolations between the above basic examples.

\begin{itemize}
\item[(d)] Given $\eps\in(0,1)$,  consider a  smooth function $\theta:\RR\to\RR$ such that
  $$\theta(t)=\begin{cases} 0,&t\geq\frac{\eps}2\\
\frac{\eps^2- t^2}2,&t\leq \frac\eps4
\end{cases}
$$ 
with $1>\theta''(t) > -2$,  for all $t$, and $\theta'(t)<0$, for $0<t<\frac{\eps}2$.

For $\phi=\frac12 (s^2+\theta(t))$, we obtain the symplectic form $\omega_\phi = (1+\frac{1}{2}\theta''(t) ) ds \land dt$, the Liouville form $\mu_\phi=sdt-\frac{1}{2} \theta'(t)ds$ and  corresponding Liouville field
$Y_\phi= \frac1{1+\frac12\theta''(t)}\left(s\frac{\p}{\p s}+\frac{\theta'(t)}2\frac{\p}{\p t}\right)$. 

Note   the Liouville form $\mu_\phi$
 interpolates between   $sdt+\frac{tds}2$  from (c) for $t<\frac\eps5$, and  $sdt$ from  (a)  for $t>\frac\eps2$. Likewise,  the Liouville field  $Y_\phi$ interpolates between  $s\frac{\p}{\p s}-\frac{t}2\frac{\p}{\p t}$ from (c) 
  for $t<\frac\eps5$, and  $s\frac{\p}{\p s}$ from  (a)  for $t>\frac\eps2$. 
  
\item[(e)] Fix a smooth non-decreasing cut-off function $\zeta:\RR\to [0,1]$ with
$$\zeta(t)=\begin{cases} 1,&t \in [\frac{\eps}5,\infty)\\
0,&t\in (-\infty, \frac\eps6]  
\end{cases}
$$ 
and $\zeta(t)>0$ for $t>\sigma$. 
Set $\phi(s,t)= \frac{1}{2} \zeta(t) s^2-\theta(t) $, where $\theta$ is the function constructed in (d). Then  there exists $\rho>0$ such that $\phi(s,t)$ is strictly subharmonic  on   $\{|s|<\rho\}$ with symplectic form $\omega_\phi = \Delta_\phi ds \land dt$ for $\Delta_\phi =  ( \zeta(t) + \frac{1}{2}\zeta''(t)s^2-\theta''(t) )$. The  resulting Liouville form
$\mu_\phi = s\zeta(t) dt - (\frac{1}{2}\zeta'(t)s^2-\theta'(t) ) ds$  and Liouville field $Y_\phi = \frac{1}{\Delta_\phi} ( s \zeta(t) \frac{\p}{\p s} + (\frac{1}{2}\zeta'(t)s^2-\theta'(t) ) \frac{\p}{\p t}) $ interpolate between
the form $-tds$ and   field $t\frac{\p}{\p t}$ from (b)  for $t\leq\frac\eps6$, and the form $sdt$ and field $s\frac{\p}{\p s}$ from  (a) for $t\geq\frac{\eps}2$. Note that the $\frac{\p}{\p s}$ component of $Y_\phi$ is a positive multiple of $s \frac{\p}{\p s}$ for $t>\frac\eps6$.

 \item[(${\rm e}'$)] We will be also using a slight modification of the previous example.
 Namely,  take  $\phi(s,t)= \frac{1}{2} \zeta(\eps-t)s^2-\theta(\eps-t) $. Then $\mu=\mu_\phi$ on $\{|s|<\rho \}$ is equal 
 $ sdt$  for $t<\eps/2$ and $\mu=( t-\eps)ds$ for $t>\frac{4\eps}5  $. Respectively the
  Liouville field  $Y$ of $\mu$ is equal to  $ s\frac{\p}{\p s}$ for $t<\eps/2$ and  to $Y=( \eps-t)\frac{\p}{\p t}$  for $t>\frac{4\eps}5   $.

\item[(f)]  Choose  a  smooth function   $\eta:\RR\to\RR$ such that 
$\eta(t)=0 $ for $t\geq 0$, and $\eta'(t)<0$ and $\eta''(t)>0$ for $t<0$. 

Then $\phi(s,t)=\frac{s^2}2+\eta(t)$
is strictly subharmonic. 
 The resulting Liouville form $\mu_\phi=sdt-\eta'(t)ds$  and Liouville field $Y_\phi=\frac1{1+\eta''(t)}\left(s\frac{\p}{\p s}+\eta'(t)\frac{\p}{\p t}\right)$   coincide with the form  $sdt$ and  field $s\frac{\p}{\p s}$  from  (a) on the half-plane $t\geq 0$.  
 
 Note that the $\frac{\p}{\p t}$ component of $Y_\phi$ is negative on the open half-plane $t<0$, i.e. it is a positive multiple of  $t\frac{\p}{\p t}$.
 \item[(g)] Let us modify the function from example (e)
 by setting
  $\phi(s,t)=\frac{1}2\zeta(|t|) s^2-\theta(|t|)$.
  Then  the modified function $\phi(s,t)$ is  still strictly subharmonic on $\{|s|<\rho\}$ for a sufficiently small $\rho>0$.
  The Liouville form $\mu_\phi$ interpolates between $sdt$ for $|t|\geq\frac{\eps}2$ and $-tds$ for  $|t|\leq\frac\eps5$.
  
 A similar modification $\phi=\frac12(s^2+\theta(|t|))$ of the example (d)  is a strictly subharmonic function which generates
 $\mu_\phi$ which interpolates  between $sdt+\frac{tds}2$  for $|t|<\frac\eps5$ and $sdt$ for $|t|>\frac\eps2.$

\end{itemize}
\end{example}

 \begin{figure}[h!]
 \centering
  \includegraphics[scale=0.3]{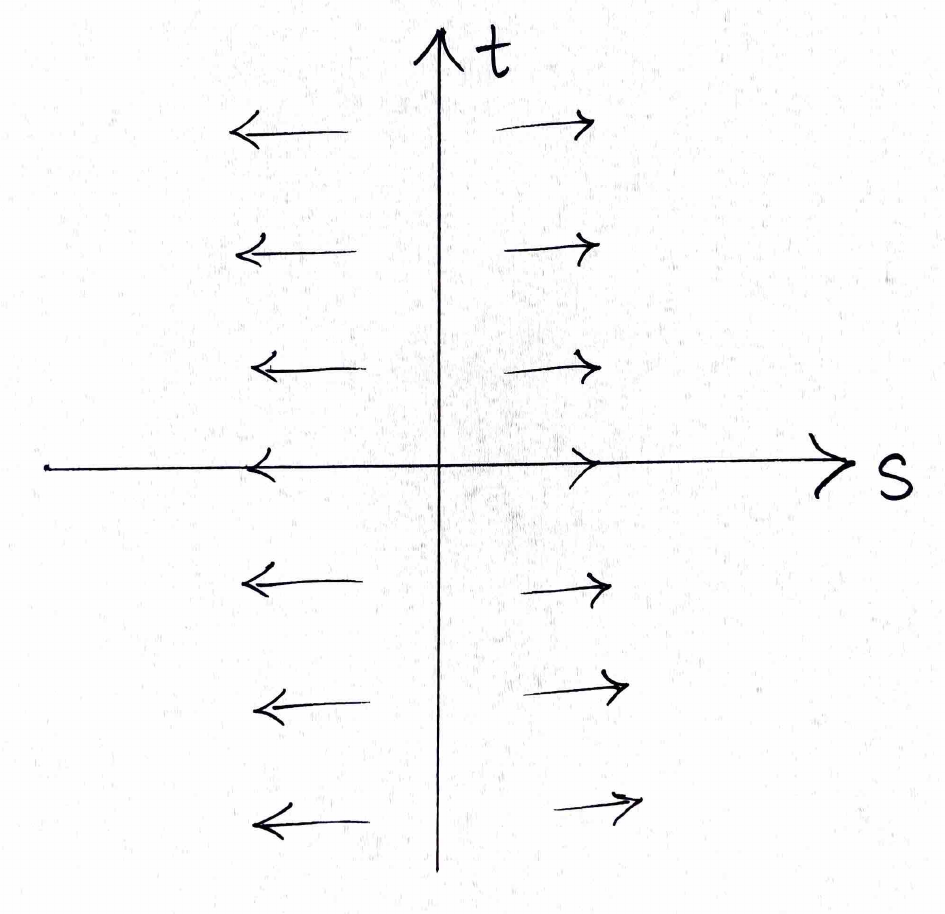}
 \caption{model (a)}
 \label{model-(a)}
 \end{figure}
 
  \begin{figure}[h!]
 \centering
  \includegraphics[scale=0.3]{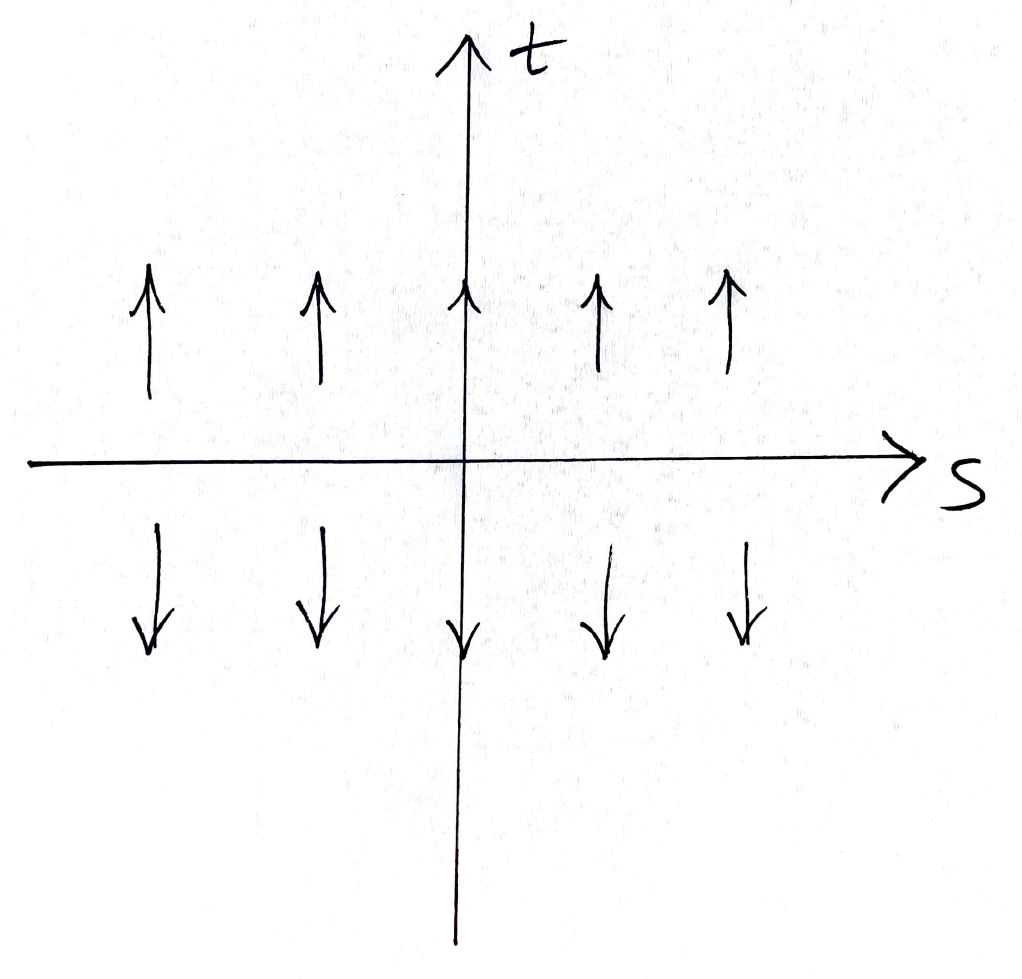}
 \caption{model (b)}
 \label{model-(b)}
 \end{figure}
 
  \begin{figure}[h!]
 \centering
  \includegraphics[scale=0.3]{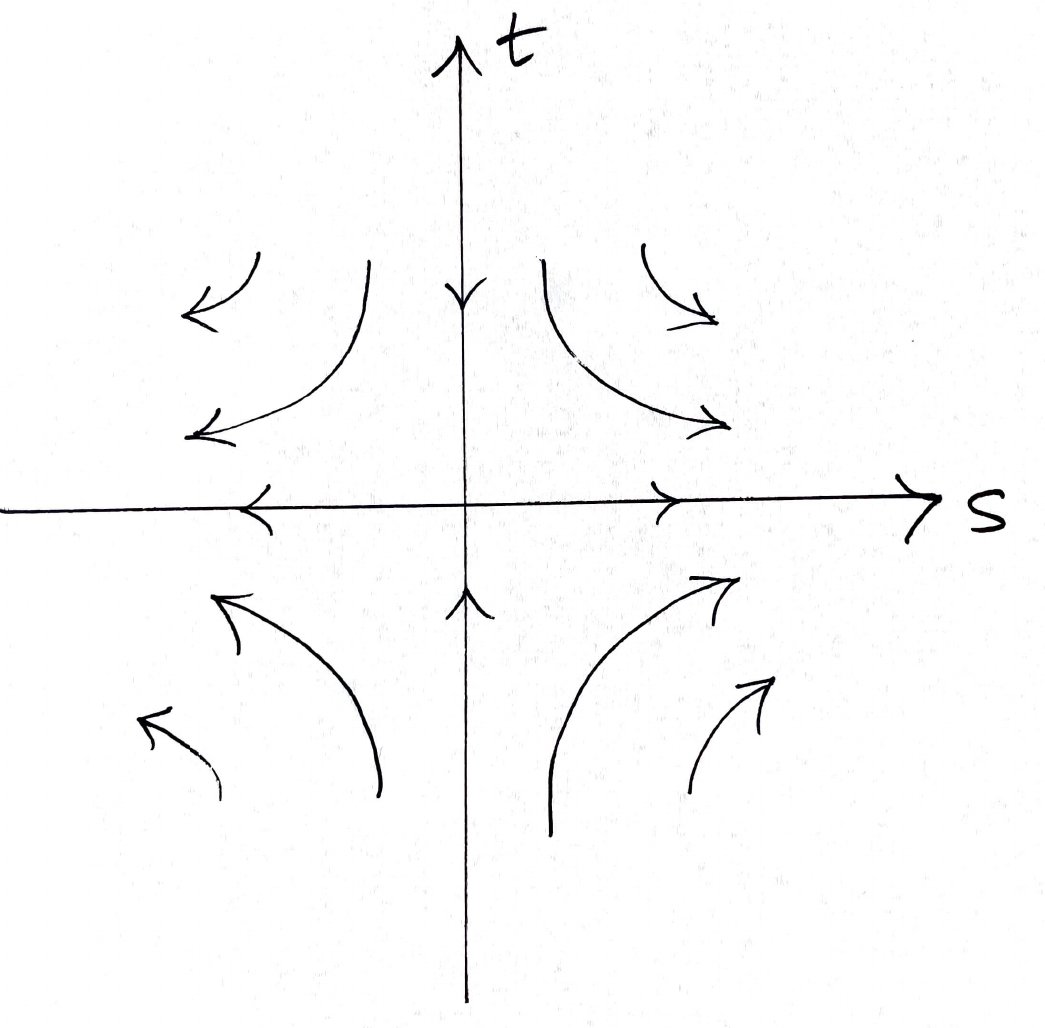}
 \caption{model (c)}
 \label{model-(c)}
 \end{figure}
 
  \begin{figure}[h!]
 \centering
  \includegraphics[scale=0.3]{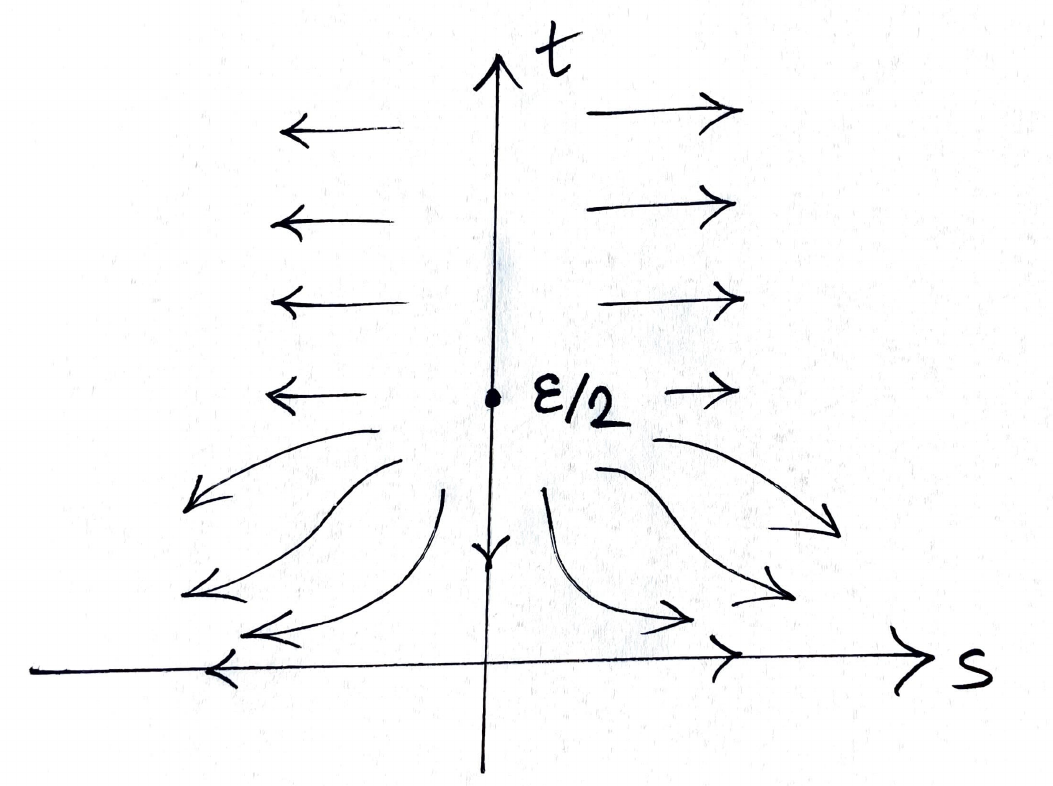}
 \caption{model (d)}
 \label{model-(d)}
 \end{figure}
 
  \begin{figure}[h!]
 \centering
  \includegraphics[scale=0.3]{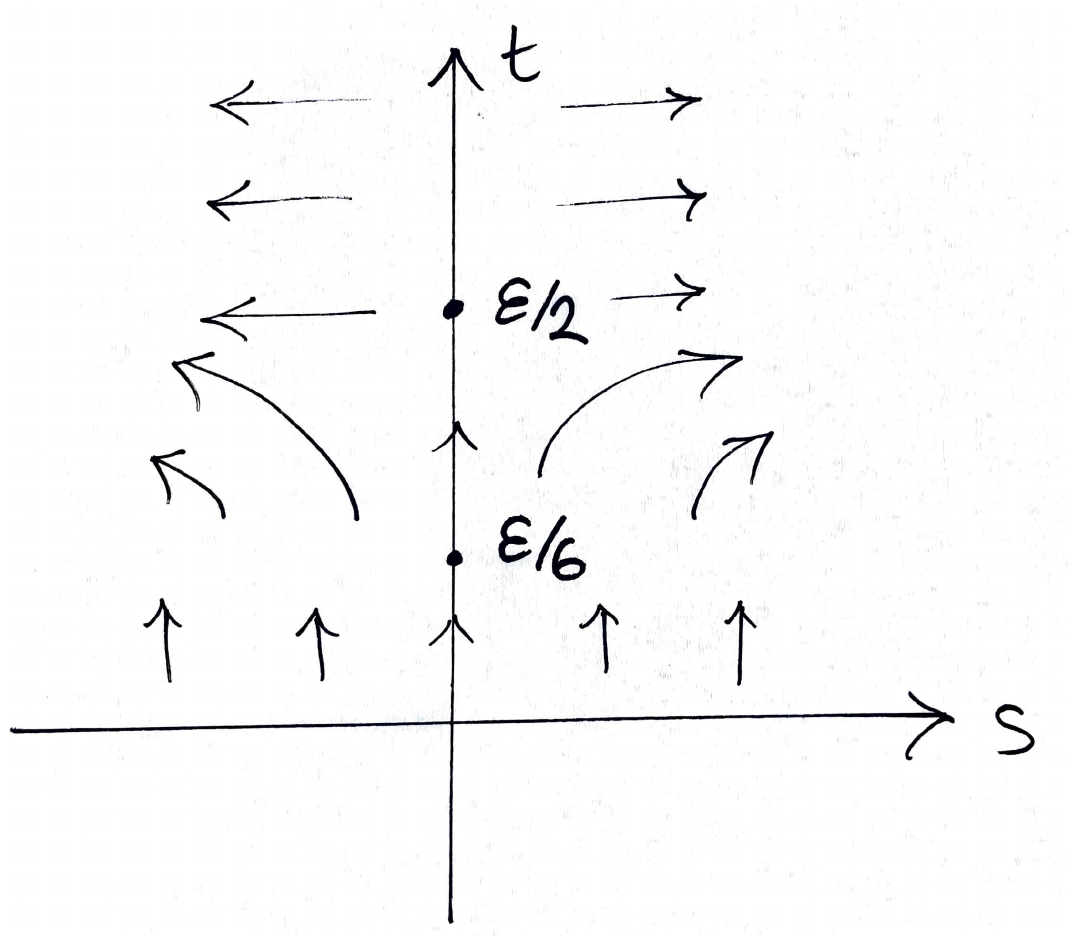}
 \caption{model (e)}
 \label{model-(e)}
 \end{figure}
 
  \begin{figure}[h!]
 \centering
  \includegraphics[scale=0.3]{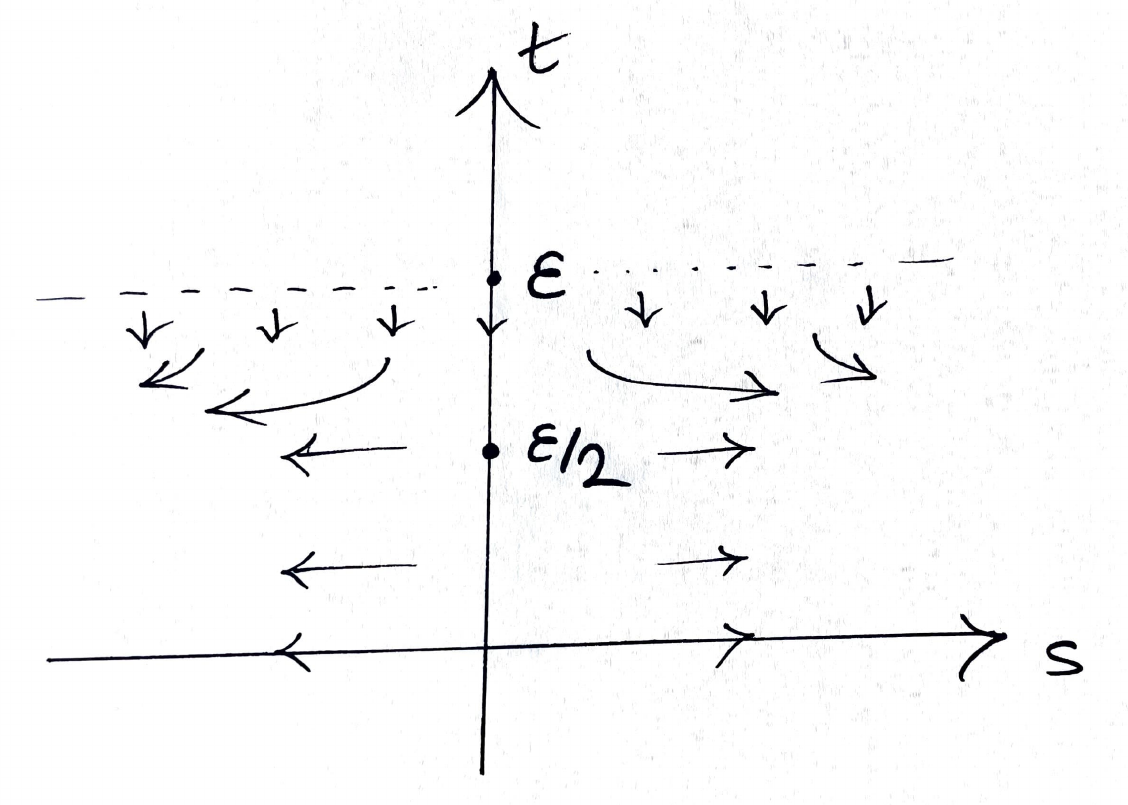}
 \caption{model (e')}
 \label{model-(e')}
 \end{figure}
 
  \begin{figure}[h!]
 \centering
  \includegraphics[scale=0.3]{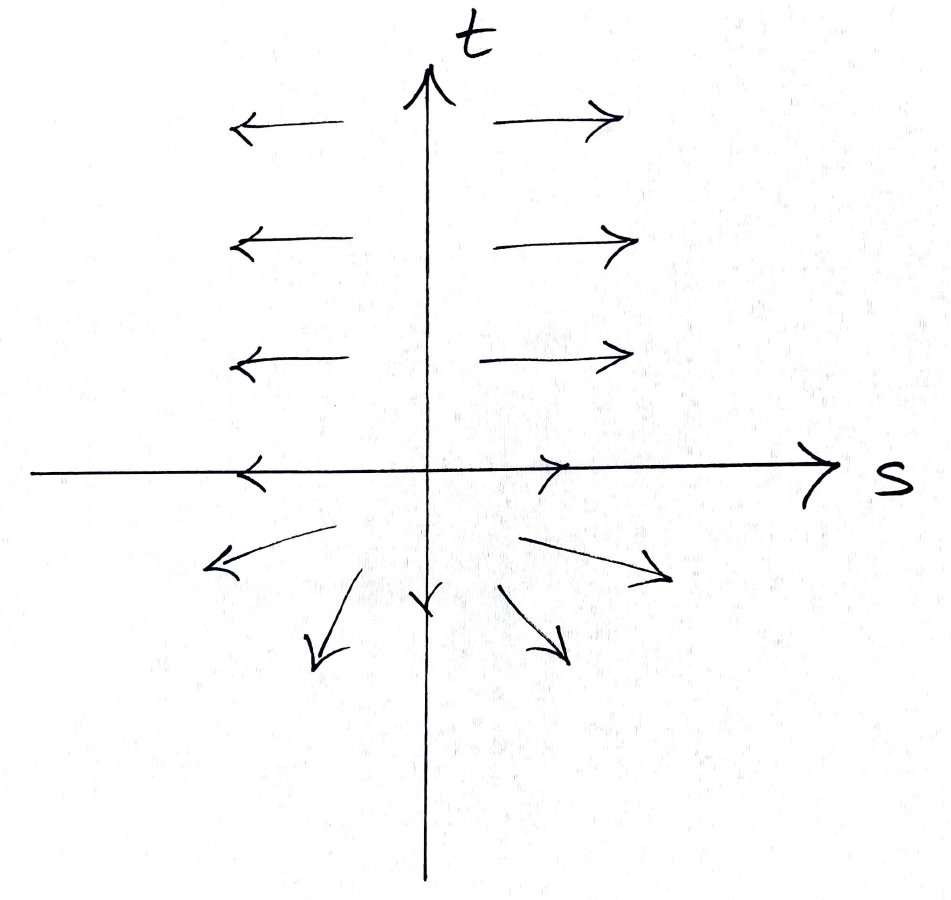}
 \caption{model (f)}
 \label{model-(f)}
 \end{figure}
 
  \begin{figure}[h!]
 \centering
  \includegraphics[scale=0.3]{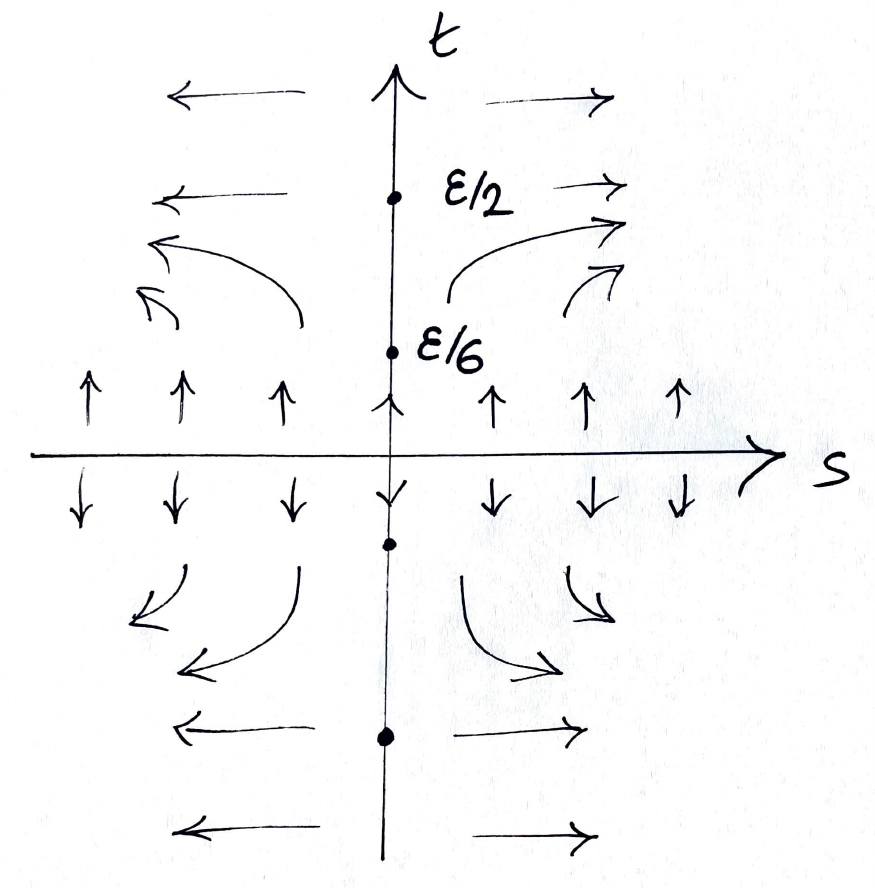}
 \caption{model (g)}
 \label{model-(g)}
 \end{figure}
 
   \begin{figure}[h!]
 \centering
  \includegraphics[scale=0.3]{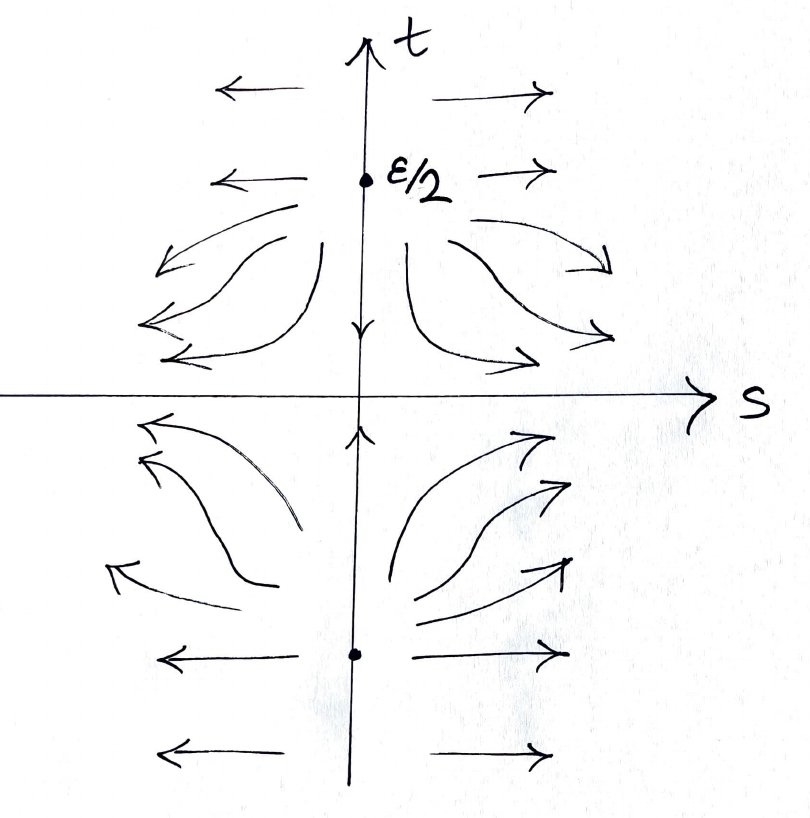}
 \caption{model (g')}
 \label{model-(g')}
 \end{figure}
  
 \begin{remark}\label{rem:modif-fixed-sympl} In the   above modifications of Liouville forms,  the underlying symplectic structure was not necessarily fixed.  While this is irrelevant for most applications, it is useful to note that using Remark \ref{rem:pot-inv} this   can be easily fixed by adjusting the models by appropriate diffeomorphisms. For instance, consider   model e). Take a  function $\delta:\Op\{s=0\}\to\R_+$ given by the formula $\delta(s,t)=(\zeta(t)-\theta'(t))s+\frac16\zeta''(s)t^3.$ Note that  $\frac{\p\delta}{\p s}=\Delta_\phi$. Define a diffeomorphism $h:\Op\{s=0\}\to\Op\{s=0\}$   by the formula
 $(t,s)\mapsto (t, \delta(s,t))$. 
  Then $h^*(ds\wedge dt)=\Delta_\phi ds\wedge dt$ and $h_*{\om_\phi}=ds\wedge dt.$
  Thus, according to Remark \ref{rem:pot-inv} the vector field $h^*Y_\phi$ is the gradient vector field of the function $\phi\circ h^{-1}$ with respect to the standard Euclidean metric $ds^2+dt^2$. It is a Liouville field for $ds\wedge dt$ dual to the Liouville form $h_*{\mu_\phi}$. 
   Note that $h$ is the identity for $t\notin \left(\frac\eps 6,\frac\eps2\right)$.
 \end{remark}

\subsubsection{Boundary attraction}\label{sec:b-attraction}

We continue with 
the complex line $\C$ with coordinate $s+it$.
Consider as well the real line $\R$ with coordinate $t$, and its cotangent bundle $T^*\R$ with Darboux coordinates $t, s$. Using these coordinates, let us identify $T^*\R \simeq \C$. In particular, we identify the germ $\sT^*[0,\eps)$ along its zero-section $[0, \eps)$ with the germ of $\RR \times i[0, \eps)$ along the imaginary interval $0 \times i[0, \eps)$.
  
    \begin{figure}[h]
\includegraphics[scale=0.3]{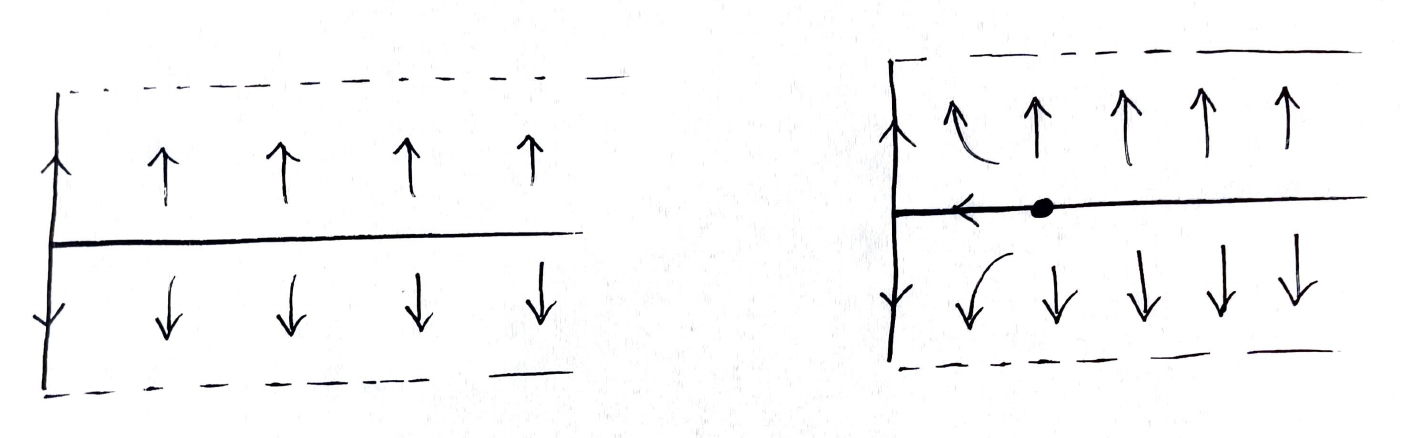}
\caption{Making the boundary attracting.}
\label{boundary-attracting}
\end{figure}

Let $\mu=\mu_\phi$ be the Liouville form on $\sT^*[0,\eps)$ from Example \ref{ex:special-adj}(d).
  \begin{definition}\label{def:attract}
  Let $\sP$ be a facet of $(\sX,\lambda)$ with the nucleus $(\sN,\lambda_\sN)$.
Let us   replace  $\lambda$ by a new Liouville form  
 $$
 \lambda^{\sP,\attract} =\lambda+(\mu-sdt) = \lambda_\sN + \mu 
 $$
 where we  identify $\mu-sdt$ with  its pullback along the splitting map $\sU_\sP\to \sT^*[0, \eps)$, extending it by 0 on the complement. 
We call this operation {\em making the facet $\sP$ attracting}.
Given a collection of faces $\bP=\{\sP_1,\dots,\sP_k\}$, repeating the construction   for  each facet $\sP\in\bP$ we get    a new Liouville form
$$
 \lambda^{\bP,\attract} =\lambda+\mathop{\sum}\limits_{i = 1}^k(\mu_i-s_idt_i)
 $$
which we say is {\em attracting} along the faces of $\bP$.

When $\bP$ is the collection of all faces of 
 $\sX$, 
we write $\lambda^{\attract}$ in place of   $\lambda^{\bP,\attract}$.
 
 \end{definition}

Note that  $(\sX,\lambda^{\bP,\attract})$  is not   a W-block due to the behavior of the  Liouville form near the boundary.  We call   $(\sX,\lambda^{\bP,\attract})$  a {\em W-block with attracting boundary}, and sometimes still call it W-block, if the boundary behavior is clear from the context.
The main advantage of   W-blocks with attracting boundary is that  unlike W-blocks which cannot have a Morse potential due to the postulated boundary behavior, {\em any W-block $(W,\lambda)$ with attracting boundary can be deformed via a $C^\infty$-small deformation to a W-block with attracting boundary with a Morse potential} (comp. Corollary \ref{cor:W-Morse} ).

 W-blocks with attracting boundary share a lot of properties of standard W-blocks. For instance, we have the following straightforward observation.

  \begin{lemma}\label{lm:Lyap-adjust}
There exists a defining domain $W$ for $(\sX,\lambda)$  
whose  vertical boundary $\partial_v W$  remains  transverse to the modified  Liouville field 
  $Z^{\bP,\attract}$, and the skeleton continues to satisfy  
  $$\Skel(X, \lambda) = \bigcap\limits_{t>0}\wt Z^{-t}(W)$$
  \end{lemma}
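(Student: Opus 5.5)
The plan is to work one facet at a time in the split collar, where everything reduces to the planar model of Example~\ref{ex:special-adj}(d), and then patch the local pictures together. By definition, near a facet $\sP$ with nucleus $\sN$ we have $\lambda=\lambda_\sN+s\,dt$ on $\sN\times\sT^*[0,\eps)$ and $\lambda^{\bP,\attract}=\lambda_\sN+\mu$. The Liouville field therefore splits as
\[
\wt Z=Z_\sN+Y_\phi,\qquad Y_\phi=\frac{1}{1+\frac12\theta''(t)}\Bigl(s\frac{\p}{\p s}+\frac{\theta'(t)}2\frac{\p}{\p t}\Bigr).
\]
Since $\theta'\le 0$ on $[0,\eps)$ and $1+\frac12\theta''>0$, the $t$-component of $\wt Z$ is non-positive. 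It is strictly negative for $0<t<\eps/2$ and vanishes for $t\ge\eps/2$. The $s$-component is a positive multiple of $s\,\p_s$, exactly as for $Z$. Near a $k$-face the modifications in different collar coordinates $t_i$ commute, because they live in different factors $\sT^*\cI$.

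For the domain, I will take $W$ to be a defining domain built from the split potential $\phi=\phi_\sN+\sum s_i^2$. Near each face it is a product $W_\sN\times\{|s|\le c\}\times[0,\eps)^k$, with corners smoothed as in Section~\ref{sec:smooth-corners}, so that away from a compact part of the interior its vertical boundary is $\p_vW_\sN\times\{|s|\le c\}\cup W_\sN\times\{|s|=c\}$ (times the $t$-directions). On the piece $\{|s_i|=c\}$, transversality holds because the $s_i$-component $s_i\p_{s_i}/(1+\frac12\theta'')$ points strictly outward, and the $t$-component is tangent. On $\p_vW_\sN\times\cdots$, transversality holds because $Z_\sN$ is outward to $\p_vW_\sN$ and the added terms are tangent. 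On the smoothed transition region, $\wt Z$ is a positive combination of outward vectors for the two pieces, so it is outward there as well. Away from $\bigcup U_\sP$ we have $\wt Z=Z$. Since $\theta$ interpolates within $t<\eps/2$, we can shrink $\eps$ in the germ (Remark~\ref{rem:shrink-extend}) so that these regions glue.

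For the skeleton, the points with $|s|\neq0$ in a collar still escape, as before. The $s$-dynamics is unchanged up to positive rescaling, and the $\sN$-dynamics is exactly $Z_\sN$, so backward-invariant points must satisfy $s=0$ and lie in $\Skel(\sN)$ times the $t$-directions. For the original $Z$, the skeleton near $\sP$ is $\Skel(\sN)\times[0,\eps)$. For $\wt Z$, on $\{s=0\}$ the flow only pushes $t$ toward $0$, i.e.\ toward the boundary, in backward-time complement. The $t$-interval is therefore preserved as a set, and $\bigcap_{t>0}\wt Z^{-t}(W)\cap U_\sP=\Skel(\sN)\times[0,\eps)$ still holds. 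Outside the collars nothing changes.

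The main obstacle is bookkeeping at higher corners: checking that the smoothed vertical boundary stays transverse where several attracting collars and the $\p_vW_\sN$ pieces meet simultaneously. I will handle this by induction on the codimension of faces, using the product structure and the fact that a convex combination of outward-pointing conditions stays outward.
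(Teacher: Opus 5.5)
Your argument is correct and is the local collar computation the paper has in mind; the paper states the lemma as a ``straightforward observation'' and gives no written proof. In the split collar the modified field is $Z_\sN$ plus a positive rescaling of $s\,\p_s$ plus a $t$-component $\frac{\theta'(t)}{2(1+\frac12\theta''(t))}\,\p_t\le 0$, so transversality to $\p_vW$ and the skeleton are unchanged. You should state explicitly that $\theta'(0)=0$: this is what makes $\wt Z$ tangent to $\p_hW$ and $[0,\eps)$ forward-invariant, and your ``the $t$-interval is preserved'' step relies on it.

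You can also skip the corner-smoothing bookkeeping by taking $W=\{\phi\le c\}$ for the split potential $\phi$. Near the faces $\phi$ is independent of $t$, so there $d\phi(\wt Z)=d\phi_\sN(Z_\sN)+\sum_i 2s_i^2/(1+\frac12\theta''(t_i))>0$ on $\p_vW$ directly. Away from the faces nothing changes, since $\wt Z = Z$ there.
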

 
Note that by attaching external collars using modifications from (g) we get back  a W-block which is  homotopic to the original block $\sX$,
 see Remark   \ref{rem:shrink-extend}.



 \subsection{Stripping faces}\label{sec:erasing-face}
 Let $(\sX,\lambda)$ be a $2n$-dimensional W-block.
 
     \begin{figure}[h]
\includegraphics[scale=0.3]{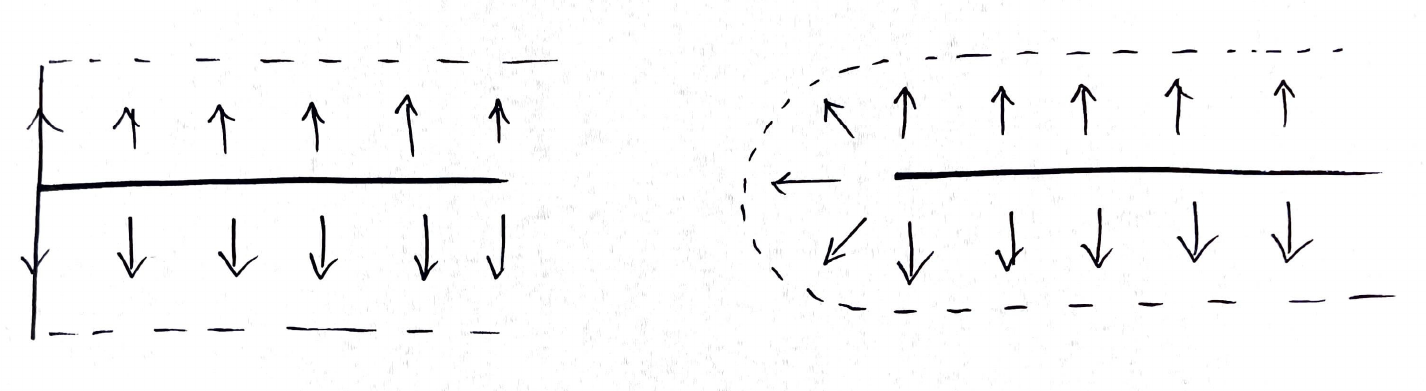}
\caption{Modification to strip boundary.}
\label{stripping-boundary}
\end{figure}

 As introduced for any manifold with corners in Section~\ref{ssec:Mbc},
 we have the collar extension $ \sX^{-\llcorner}$ whose collar restriction is the original manifold with corners $\sX =  (\sX^{-\llcorner})^{+\llcorner}$. 
 Using the collar splittings of $\sX$, by construction we have for some $\delta<0$
 $$
\sX^{- \llcorner} \setminus  \sX  = \bigcup \limits_{\mbox{$k$-faces $\sP$ with $k>0$}} \sN \times \sT^*{[\delta, 0)}^k
$$

We set  $\sX^\bigcirc = \Int \sX^{-\llcorner}$ to be the interior and regard it as a smooth manifold without boundary.

 We extend $\lambda$ to $\sX^{\bigcirc}$  in two ways.  First, we extend $\lambda$ as a 1-form, still denoted by $\lambda$, by taking its restriction to each $ \sN \times \sT^*{(\delta, 0)}^k$ to be  equal to $\lambda_\sN + \sum_{i=1}^k s_i dt_i$. Second, 
  we extend $\lambda$ as a Liouville form, denoted by $\lambda^{\bigcirc}$,  
  by taking its restriction to each $ \sN \times \sT^*{(\delta, 0)}^k$ to be  equal to $\lambda_\sN +\sum_{i=1}^k \mu_i$, where
$\mu_i$ is the  pullback  along the $i$-th collar coordinate of the form   $\mu=sdt-\eta'(t)ds$ from Example~\ref{ex:special-adj}(f) above.
    \begin{figure}[h]
\includegraphics[scale=0.2]{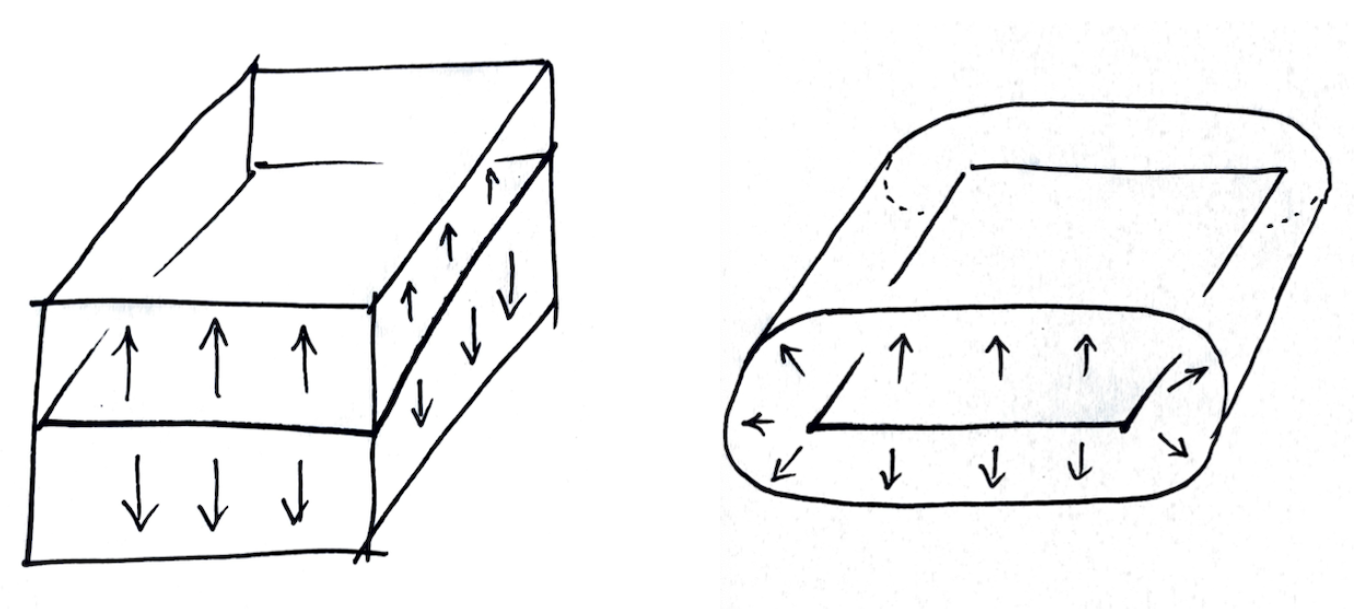}
\caption{Stripping of some boundary faces of a W-block.}
\label{partial-stripping boundary}
\end{figure}

 \begin{lemma}
 \label{lm:underlying-W-block}
$(\sX^{\bigcirc}, \lambda^{\bigcirc})$ is a W-block without boundary, i.e.~the germ of a Weinstein manifold along its skeleton
$\Skel(\sX^{\bigcirc}, \lambda^{\bigcirc}) = \Skel(\sX, \lambda)$. 
 
 \end{lemma}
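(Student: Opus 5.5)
Throughout, work with a defining domain $W$ of $(\sX,\lambda)$ and a split potential $\phi$. Near every nucleus, $\phi=\phi_\sN+\sum s_i^2$ and $\lambda=\lambda_\sN+\sum s_i dt_i$. The plan is to build an explicit defining domain $W^\bigcirc\subset\sX^\bigcirc$ with no boundary faces, a potential $\phi^\bigcirc$ for the new Liouville field $Z^\bigcirc$, and then identify the skeleton.

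\textbf{Step 1: local form of $\lambda^\bigcirc$.} On each stripped collar piece $\sN\times\sT^*(\delta,0)^k$ the form is a product,
\[
\lambda^\bigcirc=\lambda_\sN+\sum_i\mu_i,\qquad \mu_i=s_i\,dt_i-\eta'(t_i)\,ds_i .
\]
By Example \ref{ex:special-adj}(f), each factor $\mu_i$ is the Liouville form $d^\C\psi$ of the strictly subharmonic function $\psi(s,t)=\tfrac{s^2}{2}+\eta(t)$, with Liouville field $Y=\frac{1}{1+\eta''(t)}\bigl(s\partial_s+\eta'(t)\partial_t\bigr)$. Since $\eta=0$ for $t\ge 0$, the form $\lambda^\bigcirc$ agrees with $\lambda$ on $\sX$, and it glues smoothly with the product structures on overlapping collars. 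So $\lambda^\bigcirc$ is a Liouville form on $\sX^\bigcirc$, and its field is
\[
Z^\bigcirc=Z_\sN+\sum_i Y_i .
\]

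\textbf{Step 2: potential.} Define $\phi^\bigcirc=\phi$ on $\sX$. On each collar piece, set
\[
\phi^\bigcirc=\phi_\sN+\sum_i\bigl(s_i^2+2\eta(t_i)\bigr).
\]
Each $Y_i$ is a genuine gradient of $\psi_i$ for the metric $(1+\eta'')(ds^2+dt^2)$. Hence the product near-metric $g_\sN\oplus\bigoplus_i \tfrac12(1+\eta''(t_i))(ds_i^2+dt_i^2)$ makes $Z^\bigcirc$ a near-gradient of $\phi^\bigcirc$. These local pieces agree on overlaps because the split structures are compatible, which gives a global potential on a neighborhood.

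\textbf{Step 3: Liouville property and skeleton.} In each 2-dimensional factor with $t<0$, the $\partial_t$-component $\eta'(t)/(1+\eta'')$ of $Y$ is negative. So trajectories of $-Z^\bigcirc$ move toward $t\ge0$, and trajectories of $+Z^\bigcirc$ leave toward $t\to\delta$. I propose $W^\bigcirc$ to be $W$ together with the collar regions
\[
\{\phi_\sN\le c,\ \textstyle\sum_i (s_i^2+2\eta(t_i))\le c'\}
\]
over the stripped pieces, with corners smoothed as in Section \ref{sec:smooth-corners}. Then $Z^\bigcirc$ is outward transverse to $\partial W^\bigcirc$: on the old vertical part this is the $Z_\sN$ and $s\partial_s$ transversality, and on the new part it follows because $\phi^\bigcirc$ increases along $Z^\bigcirc$ away from the zeros. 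For the skeleton, any point with some $t_i<0$ has $\phi^\bigcirc$ strictly decreasing along the backward flow in that coordinate, and $t_i$ increases under the backward flow. So its backward orbit enters $\sX$, its forward orbit exits $W^\bigcirc$, and it is not in $\Skel$. Inside $\sX$ the field is unchanged, and $\sX$ is backward invariant. Hence $\Skel(\sX^\bigcirc,\lambda^\bigcirc)=\Skel(\sX,\lambda)$.

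\textbf{Main obstacle.} The delicate point is the transversality and compactness of $W^\bigcirc$ near higher corners, where several $t_i<0$ interact and where corner smoothing is needed. I would handle this by using the sublevel set $\{\phi^\bigcirc\le c\}$ of the global potential directly: take $c$ slightly above the maximum critical value near the skeleton, and check compactness by using that $\eta(t)\to$ large positive values as $t\to\delta$, after rescaling $\eta$ if necessary so that it blows up at the edge of the collar.
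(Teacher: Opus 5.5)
\textbf{Gap: the defining domain in Step 3.}
Steps 1 and 2 are essentially fine. They are in fact more explicit than the paper, whose proof treats the Weinstein property and the skeleton as evident and only checks that a defining domain exists. That last step is exactly where your argument fails as written, in two ways.

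First, the region $W\cup\{\phi_\sN\le c,\ \sum_i(s_i^2+2\eta(t_i))\le c'\}$ does not match $W$ along $\{t_i=0\}$. For $W=\{\phi\le c\}$, the slice $W\cap\{t_i=0\}$ near the nucleus is the round set $\{\phi_\sN+\sum s_i^2\le c\}$. Your collar piece instead has the product $\{\phi_\sN\le c\}\times\{\sum s_i^2\le c'\}$ as its top. So the union keeps open pieces of the hypersurface $\{t_i=0\}$ in its boundary. There $Z^\bigcirc=Z$ is tangent, since $\eta'(0)=0$, and no corner smoothing repairs this.

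Second, your fallback $\{\phi^\bigcirc\le c\}$ glues correctly but is in general not compact in $\sX^\bigcirc=\Int\sX^{-\llcorner}$. Because $\eta$ is flat at $0$, $2\eta(t)$ can stay below $c-\min\phi_\sN$ all the way to $t=\delta$ (e.g.\ $\eta(t)=e^{1/t}$ on $(\delta,0)$). The sublevel set then runs into the deleted hypersurface $\{t_i=\delta\}$. Rescaling $\eta$ to avoid this changes $\mu_i$, and hence $\lambda^\bigcirc$ itself. So it proves the lemma only for special choices of $\eta$, not for the form in the statement.

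\textbf{How to close it.}
The fix uses a fact you already recorded, and it is the entire content of the paper's proof. The $\partial_{t_i}$-component $\eta'(t_i)/(1+\eta''(t_i))$ of $Z^\bigcirc$ is negative for $t_i<0$, so $Z^\bigcirc$ is outward transverse to $\{t_i=\delta'\}$ for every $\delta<\delta'<0$.

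The paper therefore takes the collar extension $W^{-\llcorner}$, equivalently $\{\phi^\bigcirc\le c\}\cap\{t_i\ge\delta'\ \forall i\}$. All of its faces are outward transverse to $Z^\bigcirc$, and one smooths the corners while keeping transversality; this domain is compact for every admissible $\eta$.

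Alternatively, you can keep $\lambda^\bigcirc$ fixed and change only the $t$-part of your potential to $\sum_i G(t_i)$, with $G$ flat at $0$, $G'<0$ on $(\delta,0)$, and $G$ large near $\delta'$. This is legitimate because $Z^\bigcirc$ has no zeros on the stripped collars, and away from zeros any function strictly increasing along the field is a potential.

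With either repair, your skeleton argument goes through: on the cut-off domain, $t_i$ decreases at a rate bounded away from $0$, so forward orbits with some $t_i<0$ exit in finite time.

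\textbf{Minor points.}
\begin{itemize}
\item Backward orbits never enter $\sX$ in finite time; by flatness of $\eta'$ they only approach $\{t_i=0\}$. You do not need that claim.
\item The near-metric factor in Step 2 should be $2(1+\eta''(t_i))(ds_i^2+dt_i^2)$, not $\tfrac12(1+\eta''(t_i))(ds_i^2+dt_i^2)$.
\item The near-metrics on overlapping pieces, and the given one on $\sX$, need not agree. They must be glued by a partition of unity, which works because $\{g:\iota_Z g=d\phi\}$ is convex.
\end{itemize}
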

\begin{proof}
It only remains to check the existence of a defining domain for
$(\sX^{\bigcirc}, \lambda^{\bigcirc})$.   Given a defining domain $W$ for $(\sX,\lambda)$, note the Liouville field $Z^\bigcirc$ associated to
$\lambda^{\bigcirc}$ is outwardly transverse along the faces of $W^{-\llcorner} \subset \sX^\bigcirc$. Thus smoothing the faces of $W^{-\llcorner}$
 to a smooth boundary, 
 preserving  transversality of   $Z^\bigcirc$,
  provides a  defining domain $W^\bigcirc$ for
$(\sX^{\bigcirc}, \lambda^{\bigcirc})$. 
\end{proof}

\begin{definition}
 We call  $ (\sX^\bigcirc,\lambda^\bigcirc)$
 the {\em underlying boundaryless W-block}   of $(\sX,\lambda)$.
 \end{definition}

 \begin{remark}
In Sylvan's terminology, a W-block may be thought of as a Weinstein manifold with ``stops" (that can have corners). From this viewpoint, taking the underlying boundaryless W-block is stop removal.
\end{remark}

More generally, consider a list $\bP=[\sP_1,\dots, \sP_k]$ of facets  of $\sX$ and let $\bP^c$ consist of all $k$-faces, $k>0$, which do not lie in the interior of $\bigcup_{i=1}^k \sP_i \subset \partial \sX$. Equivalently, $\bP^c$ consists of all $k$-faces which lie in the union of the facets not in $\bP$. In particular note that $k$-faces in the boundary of $\bigcup_{i=1}^k \sP_i \subset \partial \sX$ are in $\P^c$.  
 In this case, we similarly have the {\em $\bP$-stripped}  W-block 
$(\sX^{\bP,\bigcirc},\lambda^{\bP,\bigcirc})$ with underlying space
$$
\sX^{\bP,\bigcirc} = \sX^{\bigcirc} \setminus  \left( \bigcup \limits_{\mbox{$k$-faces $\sQ \in \bP^c$ with $k>0$}} \sN \times \sT^*{(\delta, 0)}^k\right)
$$
and restricted Liouville form $\lambda^{\bP,\bigcirc}= \lambda^{\bigcirc}|_{\sX^{\bP,\bigcirc}}$.
Note that $\sX^{\bP,\bigcirc}$ only depends on the underlying set $\bP$ not its ordering.
 
Note that   stripping away  a  face $\sP$ leaves its nucleus $\sN$ as a (representative of a) Weinstein hypersurface $\sN$ in the block $\sX$
Hence, we call sometimes the operation $\sX \mapsto  \sX^{\sP,\bigcirc} $ a { \em  nucleus   to   W-hypersurface conversion}.

 When $\bP$ is a list of all faces, then $(\sX^{\bP,\bigcirc},\lambda^{\bP,\bigcirc})=(\sX^\bigcirc,\lambda^\bigcirc)$ is the underlying boundaryless W-block of $(\sX, \lambda)$.

 Now we introduce {\em  cotangent blocks}, a generalization of the {\em proper cotangent blocks} of 
 Definition~\ref{ex:co-bl}.

 \begin{definition}[Cotangent blocks]\label{ex:gen-co-bl}

 Let $M$ be a compact  $n$-dimensional smooth manifold with corners, and $(\sT^*M, pdq)$ its $2n$-dimensional proper cotangent block. Recall the $k$-faces  $\sP \subset \sT^*M$ correspond  to $k$-faces $P \subset M$ by the formula $\sP = \sT^*M|_P$.
Given a  list $\bP=[\sP_1,\dots, \sP_k]$    of  facets of $\sT^*M$, we call  the $\sP$-stripped W-block  
$((\sT^*M)^{\bP,\bigcirc},(pdq)^{\bP,\bigcirc})$
 a {\em  cotangent block}.

\end{definition}
   \begin{remark}\label{rem: stripping not pure}
   
Suppose that   $j:\Lambda \hookrightarrow \partial_v W$ is a representative of a Legendrian which is not necessarily pure. Let $P_1,\dots, P_k$ be all of the facets of $\Lambda$ that fail the purity test of Definition~\ref{def:adLeg}.
Let $\bP=[\sT^*P_1,\dots\sT^*P_k]$ be the corresponding facets of the proper cotangent block $\sT^*\Lambda$,
and 
 $(\sT^*\Lambda)^{\bP,\bigcirc}$ the   cotangent block obtained by stripping the faces of $\bP$.
Then $j$ extends to  a W-hypersurface embedding $\wh j:(\sT^*\Lambda)^{\bP,\bigcirc}\hookrightarrow \partial_v W$
which we refer to as a {\em ribbon} of $\Lambda$. 
\end{remark}
 
 \subsection{Partial order on W-blocks}
 
 Now let us  define a partial order on W-blocks using the operations of stripping faces and smoothing corners.

 \begin{definition}
Given $2n$-dimensional W-blocks   $(\sX_1,\lambda_1)$, $(\sX_2,\lambda_2)$, we say that
$(\sX_1,\lambda_1)$  is an {\em ancestor  }  of    $(\sX_2,\lambda_2)$, 
and $(\sX_2,\lambda_2)$  is a {\em descendent }  of    $(\sX_1,\lambda_1)$, 
and write  
$$
 (\sX_1,\lambda_1) \prec (\sX_2,\lambda_2) 
$$
if   $(\sX_1,\lambda_1)$ is  obtained from $(\sX_2,\lambda_2)$  (up to  deformation equivalence) by  a sequence of operations  
 of stripping faces and smoothing corners. 
 
\end{definition}
    \begin{figure}[h]
\includegraphics[scale=0.3]{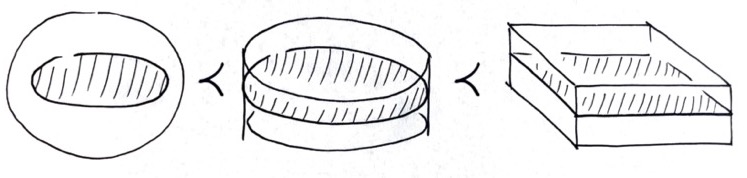}
\caption{An example of the partial order of ancestry.}
\label{fig:partial order}
\end{figure}

\begin{figure}[h]
\includegraphics[scale=0.2]{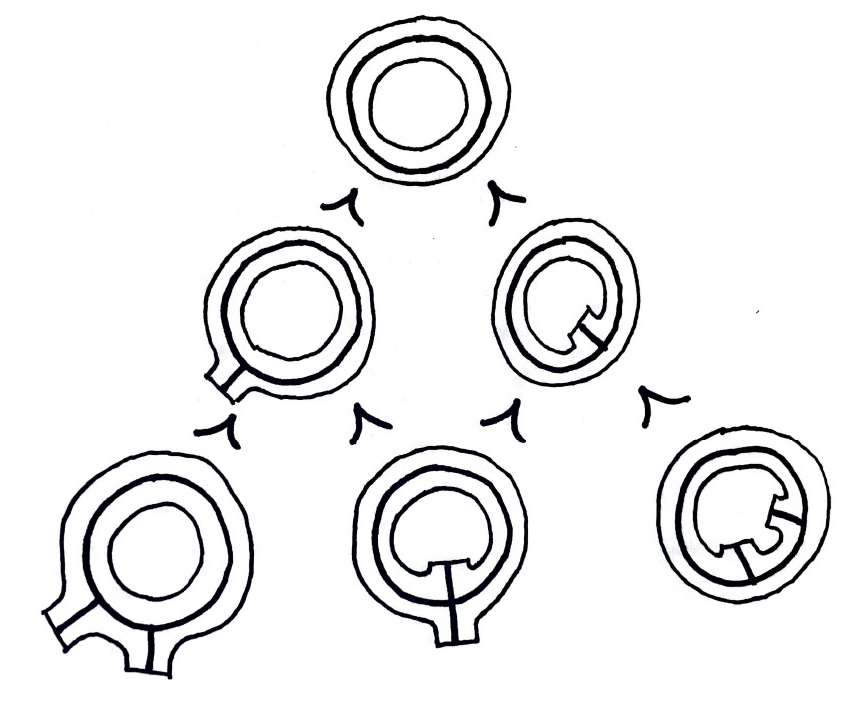}
\caption{Another example of the order of ancestry, here with $T^*S^1$ the common ancestor.}
\label{ancestrytree}
\end{figure}

 It is straightforward to check that $\prec$ gives a partial order on   $2n$-dimensional W-blocks up to  deformation equivalence.
Moreover, any W-block $(\sX, \lambda)$ has 
ancestor $(\sX^\bigcirc, \lambda^\bigcirc)   \prec  (\sX,\lambda) $ 
its underlying boundaryless W-block  $(\sX^\bigcirc, \lambda^\bigcirc)$
obtained by stripping all its boundary faces, which up to deformation equivalence can be equivalently obtained by any sequence of  stripping faces and smoothing corners. 
Furthermore, up to deformation equivalence, $(\sX^\bigcirc, \lambda^\bigcirc) $ has only itself as ancestor, in other words is initial for the order $\prec$.

\begin{example}  Here is a simple and instructive sequence of ancestors/descendants to keep in mind, where we denote $\sR^{2n}$ the germ of $\RR^{2n}$ at the origin.
$$
(\sR^{2n},\frac12 (pdq-qdp) \prec (\sT^*D^n,pdq) \prec  (\sT^*[0,1]^n,pdq) ) 
$$  
The second W-block $(\sT^*D^n,pdq) $ is obtained  from the third W-block $(\sT^*[0,1]^n,pdq)$ by smoothing the corners, and the first  W-block $(\sR^{2n},pdq-qdp)$ is obtained  (up to deformation equivalence) from the second W-block $(\sT^*D^n,pdq) $ by  stripping  the boundary face. See Figure \ref{fig:partial order}. There are also various other intermediate possibilities given by stripping faces and smoothing corners in various patterns.
\end{example}

 \subsection{Converting a W-hypersurface to a face nucleus}
            \begin{figure}[h]
\includegraphics[scale=0.3]{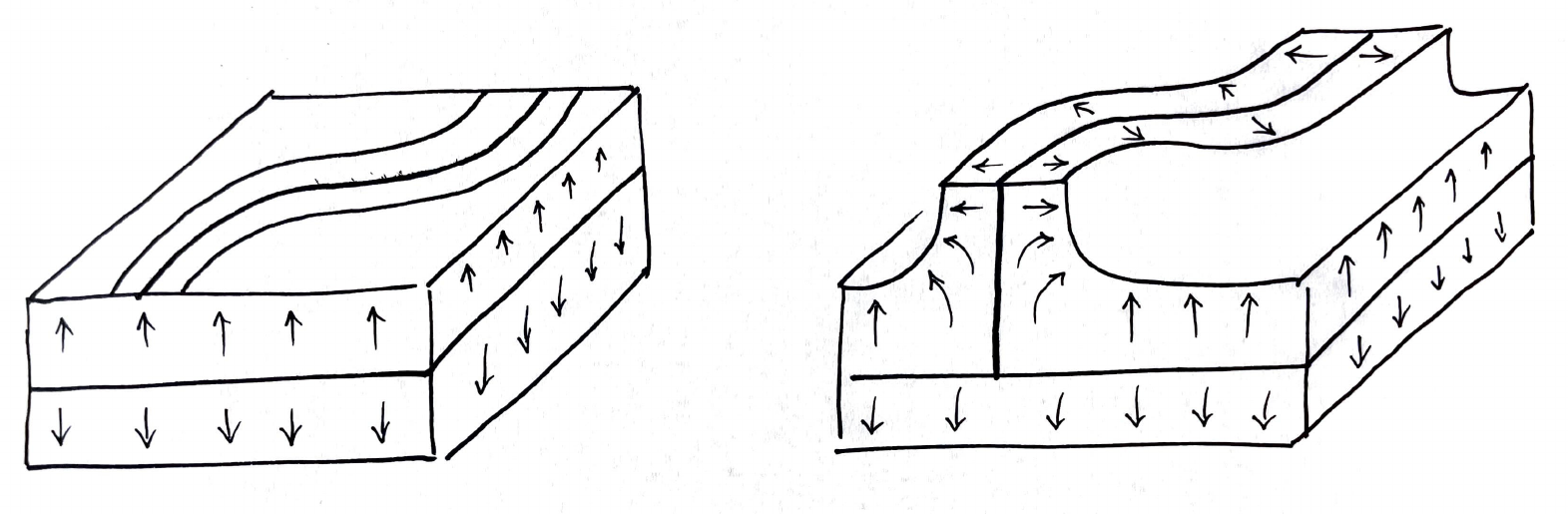}
\caption{Converting a W-hypersurface to a face nucleus}
\label{hyp-conversion}
\end{figure}

Let $(\sX,\lambda)$ be a W-block,  let $Z$ be the corresponding Liouville field and let $\sN\subset\p_\infty\sX$ be a W-hypersurface. Choose a defining domain $W$ for $X$ and choose a defining domain $V\subset\p_vW$ for a   representative of $ \sN$ in the  vertical boundary of  $W$. Denote $\lambda_V:=\lambda|_V$ and choose a function $\psi:V\to\RR_+$ such that $\{\psi=0\}=\Skel(\sN)$ and such that $V_\sigma:=\{\psi\leq \sigma\}$  are defining domains for $\sN$ for sufficiently small $\sigma>0$. Concretely, let $Z_{\sN}$ be the Liouville  field for $\sN$.  For any point $x\in V\setminus\Skel(\sN)$ denote $\wt\psi(x)=e^{-t}$  if $Z_{\sN}^t(x)\in\p V$. Note that $\wt\psi|_{\p V}=1$, and $\lim\wt\psi(x)=0$ when $x\to\Skel(\sN)$. Hence we can continuously extend  $\wt\psi$ to $\Skel(\sN)$ as equal to $0$ on $\Skel(\sN)$. Define now the required function $\psi:V\to\R$ by the formula $\psi:=\theta\circ \wh \psi$, where  $\theta: [0,\infty)\to[0,\infty)$ is a  smooth function with positive derivative on $(0,\infty)$, and equal  to $1$ at $1$, and such that $\theta^{(k)}(0)=0$ for all $k=0,1,2,\ldots$

 Consider  the domain $U:=\{|s|<\rho\}\subset (T^*[0,\eps), sdt)$ endowed with the Liouville form $\mu=\mu_\phi$ from Example \ref{ex:special-adj} (${\rm e}'$). Recall that $\mu=(t-\eps)ds$ for $t\geq\frac{4\eps}5  $ and  equal $sdt$ for $t\leq\frac\eps2$.   
For a sufficiently small $\sigma>0$ denote \begin{align*}
&Y_\sigma:=\{(x,(s,t))\in V\times U; \psi(x)+s^2\leq\sigma^2\}, \\
& S_\sigma=\{(x,(s,t))\in V\times U; \psi(x)+s^2<\sigma^2\},\; \\
&\Sigma_\sigma:=\{t=\frac{4\eps}5\}\cap Y_\sigma.\end{align*} 
Note that the negative Liouville flow  of the Liouville field of the form $\lambda_V+\mu_\phi$ is transverse to $\Sigma_\sigma$, and the negative   
flow of this Liouville field is complete in $Y_\sigma$. Denote by $Y'_\sigma$ the negative Liouville cone of $\Sigma_\sigma$ in $Y_\sigma$. Set $\wh Y_\sigma = Y_\sigma'  \cup(Y_\sigma\cap\{t<\frac {4\eps}5\}$).

 We claim that there exists a Liouville embedding $J:(Y'_\sigma, \lambda_{V}+(t-\eps)ds)\to (W,\lambda)$ such that $J(\{t= \frac{4\eps}5\}\cap Y'_\sigma) \subset \p_vW$, and $J(V\times \{ s=0, t= \frac{4\eps}5 \} )=V$. Indeed, 
  $V$  is transverse to the Reeb field $R$ of the contact form $\lambda|_{\partial_v W}$. Choosing  the flow coordinate of the vector field $\frac{5R}{\eps}$ as the  coordinate $s$ on $\Op_{\p_vW}V\subset\p_vW$  normalized to be equal to $0$ on $V$, we compute  that $\lambda_ {\Op_{\p_vW}V}=\lambda_{\sN}-\frac{\eps}5ds$.
  Note that the form $\lambda_{V}+\mu_\phi$ restricted to  $\Sigma_\sigma=\{t=\frac45\}\cap Y_\sigma$  is also equal to $\lambda_{V}-\frac{\eps}5ds$. Hence, there is a contact form preserving embedding $$J:\left(Y'_\sigma\cap\{t=\frac{4\eps}5\},\lambda_V-\frac{\eps}5ds\right)\to ( \p_vW,\lambda|_{\p W}).$$ We can then extend it to $Y'_\sigma$  as   the required Liouville embedding $J:Y'_\sigma\to W$ by matching the corresponding  negative Liouville trajectories.

Let us glue $\wh Y_\sigma$ to $W$ using $J$, i.e. define 
 $$(W_{+\sN},\lambda_{+\sN}):=(\wh Y_\sigma,\lambda_{\sN}+\mu_\phi)\mathop{\cup}\limits_{x\sim J(x),x \in Y_\sigma'}(W,\lambda).$$   
 Note that  the boundary of $W_{+\sN}$ is  the  union  $$(\p_vW\setminus \Int J(\Sigma_\sigma))\cup( S_\sigma\cap \{t\leq \frac {4\eps} 5\})\cup\p_h W\cup(Y_\sigma\cap \{t=0\}).$$ 
 The  Liouville vector  field $Z_{+\sN}$ of the form $\lambda_{+\sN}$ is transverse in the outward sense      $S_\sigma\cap \{t\leq \frac {4\eps}5\}$ and 
 $\p_vW\subset W \setminus \Int J(\Sigma_\sigma)$,  which intersect transversely along  $J(\p \Sigma_\sigma)$. It is   tangent to  $\p_h W$ and $ Y_\sigma\cap \{t=0\}$.  The corner of  $(\p_vW\setminus \Int J(\Sigma_\sigma))\cup( S_\sigma\cap \{t\leq \frac {4\eps }5\})$ along $J(\p \Sigma_\sigma)$ can be smoothed keeping  transversality to $Z_{+\sN}$. The resullting domain  $\wt W_{+\sN}$ is a defining domain for a W-block $\sX_{+\sN}$ which has an additional boundary face which has  $\sN$ as its  nucleus.
 
            \begin{figure}[h]
\includegraphics[scale=0.3]{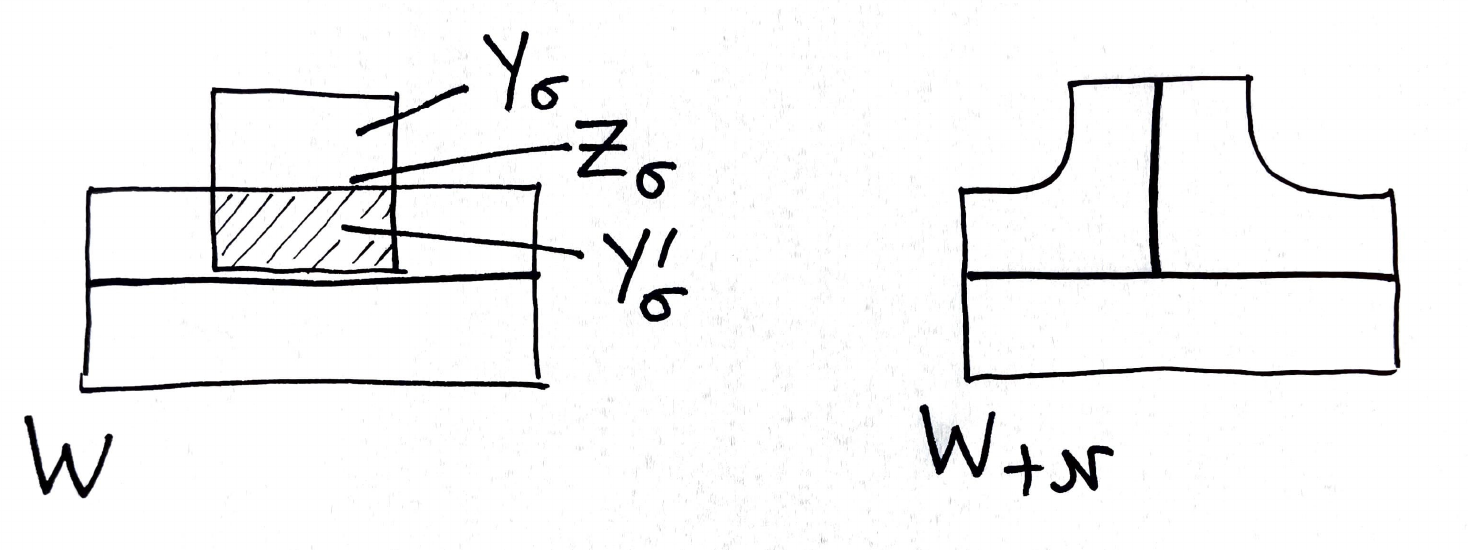}
\caption{Notation for the conversion construction.}
\label{hyp-conversion-scheme}
\end{figure}

\begin{definition}    We say that  the W-block $   (\sX_{+\sN}, \lambda_{+\sN} ) $ is the result of a {\em W-hypersurface to nucleus conversion} for the W-block $(\sX,\lambda)$ and the W-hypersurface $\sN$. \end{definition}

\begin{lemma}\label{lem:uniq conv}
Given a W-block $(\sX,\lambda)$ and a W-hypersurface $\sN \subset \p_\infty \sX$, the W-block $(\sX_{+\sN},\lambda_{+\sN})$ is well defined up to homotopically canonical strong deformation equivalence.
\end{lemma}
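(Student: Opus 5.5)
The construction of $(\sX_{+\sN},\lambda_{+\sN})$ depends on auxiliary choices: the defining domain $W$ for $\sX$, the representative of $\sN$ in $\p_vW$ together with its defining domain $V$, the function $\psi$ (through $\theta$), the model form $\mu_\phi$ from Example \ref{ex:special-adj}(${\rm e}'$) (through $\zeta,\theta,\eps,\rho$), the small parameter $\sigma$, and the corner smoothing along $J(\p\Sigma_\sigma)$. I will show that the space of all such choices is contractible, or at least weakly contractible. I will then show that along any family of choices parametrized by a disk $D^m$ the resulting W-blocks form a family related by strong homotopies, fixed over $\p D^m$ up to isomorphism. This is what ``homotopically canonical strong deformation equivalence'' means.

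I will treat the choices one at a time.

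\textbf{Step 1: the parameters.} The choices of $\theta$, $\zeta$, $\eps$, $\rho$, $\sigma$ and $\psi$ range over convex sets (cut-off functions with prescribed properties) or over intervals $(0,\sigma_0)$, hence are contractible. Varying them changes $\lambda_{+\sN}$ only through convex combinations of strictly subharmonic potentials, which stay subharmonic, or through shrinking $Y_\sigma$. Shrinking $Y_\sigma$ changes only the defining domain, not the germ at the skeleton. The skeleton of $\sX_{+\sN}$ is always $\Skel(\sX)\cup \Cone(\Skel(\sN))\cup(\Skel(\sN)\times[0,\tfrac{4\eps}5])$ glued together, so it stays constant up to reparametrizing the $t$-interval. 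Potentials can be interpolated linearly by Lemma \ref{lm:space-of-pot}, and so these families are strong homotopies. The corner smoothing is likewise a contractible choice, since smoothings transverse to $Z_{+\sN}$ form a convex family in a collar.

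\textbf{Step 2: the representative of $\sN$.} Two representatives in $\p_vW$ differ by moving along Liouville trajectories, and the set of admissible lifts is parametrized by the convex cone $C(\lambda_\sN)$. Changing $W$ to another defining domain $W'$ is handled the same way: any two defining domains are connected through the flow of $Z$ and a common smaller domain. Along such a family the embedding $J$ varies smoothly, because it is constructed canonically by matching the contact form on $\Sigma_\sigma$ and extending along negative Liouville trajectories. The glued manifolds are therefore canonically isomorphic as germs, via the Liouville flow. Proposition \ref{lm:strong-hom-hyper} lets me realize any strong homotopy of $\sN$ itself by an isotopy of its representative fixing the skeleton. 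This covers changes of $\sN$ within its strong homotopy class.

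\textbf{Step 3: the main obstacle.} The Reeb coordinate $s$ near $V$ in $\p_vW$, and hence $J$, depends on the contact form $\lambda|_{\p_vW}$. I must check that the family of embeddings $J$ remains Liouville, with image disjoint from the skeleton, uniformly in the parameters. I will handle this by shrinking $\sigma$ uniformly over the compact parameter disk. Such a $\sigma$ exists by compactness, and shrinking $\sigma$ is itself a contractible choice, so the parametric statement reduces to the unparametrized one with a uniform $\sigma$.
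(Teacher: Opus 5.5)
Your proposal is correct and follows essentially the same route as the paper. The paper also lists the auxiliary choices ($\eps$, $\zeta$, the defining domains $W$ and $V$, $\psi$ via $\theta$, and $\sigma$) and notes that each ranges over a contractible space: convex sets, defining domains canonically identified via the Liouville flow, or constants that can be taken uniformly small over compact parameter families. Varying any of these choices gives a strong deformation equivalence. Your explicit treatment of the corner smoothing along $J(\p\Sigma_\sigma)$ and of how $J$ depends on the Reeb coordinate fills in details the paper leaves implicit.
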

\begin{proof} Indeed, the choices made in the construction are, in this order, the constant  $\varepsilon>0$ and the function $\zeta$ from Example \ref{ex:special-adj}(${\rm e}'$), the choice of the defining domains $W$ for $\sX$ and $V$ for $\sN$, and the function $\psi$ and the constant the constant $\sigma>0$. Let us analyze the space of such choices as well as their effect on the W-block $(\sX_{+\sN},\lambda_{+\sN})$.

First, we can make $\varepsilon>0$ arbitrarily small from the onset and so for finite dimensional spaces of parameters we may assume that $\varepsilon$ is fixed. For fixed $\varepsilon$, the space of functions $\zeta$ satisfying the conditions of Example \ref{ex:special-adj}(${\rm e}'$) is convex, hence contractible. Varying the choice of $\zeta$ results in a strong deformation equivalence, hence the result is independent of this choice of to homotopically canonical strong deformation equivalence. 

Next, the space of defining domains $W$ for $\sX$ and $V$ for $\sN$ is also contractible. In fact, any two defining domains for the same W-block are canonically diffeomorphic (via the Liouville flow) and this diffeomorphism preserves the skeleton. Hence up to canonical strong deformation equivalence we may assume $W$ and $V$ are also fixed. 

Finally, the parameter $\sigma>0$ may also be made arbitrarily small, hence for finite parameter families we may assume it is fixed, and then the choice of $\psi$ depends on the choice of smoothing function $\theta$, the space of which is convex and hence contractible. Varying $\psi$ changes the representative $W_{+\sN}$ but leaves the W-block $(\sX_{+\sN},\lambda_{+\sN})$ unchanged. We conclude that $(\sX_{+\sN},\lambda_{+\sN})$ is well defined up to homotopically canonical strong deformation equivalence.\end{proof}

   In Section \ref{sec:erasing-face} we  discussed    the operation of  nucleus to hypersurface conversion, which was just stripping off a face of $\sX$.
    We note, that  if $\sP$ is a face of a block $\sX$ and $\sN$ its nucleus then $(\sX^{\sP,\bigcirc})_{+\sN}$ is strongly deformation equivalent to the original block $\sX$. However,  if $\sN$ is a W-hypersurface in $\sX$ then  while  $(\sX_{+\sN})^{\sP,\bigcirc}$, where $\sP$ is the created new face with the nucleus $\sN$,  is  deformation equivalent to $\sX$, it  is not strongly deformation equivalent. Indeed, the  W-hypersurface to nucleus conversion enlarges the skeleton, while  the opposite conversion operation of stripping leaves the skeleton intact.

\subsection{Vertical gluing}\label{sec:vert-gl}
Let $(\sX_1,\lambda_1)$, $(\sX_2,\lambda_2)$ be two $2n$-dimensional W-blocks. 
Suppose $\sP$ is a 1-face of $\sX_2$ with nucleus $(\sN, \lambda_2|_\sN)$ embedded as a  W-hypersurface  $j_\infty:\sN\hookrightarrow \p_\infty \sX_1$. Let us  convert it to a facet nucleus of $\sX_1$. i.e. form $((\sX_1)_{+\wt\sN},(\lambda_1)_{+\wt\sN})$, where $\wt\sN:=j(\sN)$. Let $(\sX,\lambda)$ be the W-block obtained by horizontal gluing of  $(\sX_2,\lambda_2)$ to  $((\sX_1)_{+\wt\sN},(\lambda_1)_{+\wt\sN})$ using the identification $j:\sN\to\wt\sN$ of the nuclei of the facet $\sP$ and the newly created face $\wt\sP$ of the block
  $((\sX_1)_{+\wt\sN},(\lambda_1)_{+\wt\sN})$. 

 \begin{definition}
  We say that $(\sX, \lambda)$ is the {\em vertical gluing} of the W-block $\sX_2$ along the nucleus $\sN$ of the face $\sP\subset \sX_2$ to the W-block  $\sX_1$
   along the representative of a W-hypersurface  $j:\sN\hookrightarrow  \sX_1 \setminus \Skel(\sX_1,\lambda_1)$, and denote the data of the gluing by $\sX_2 \to \sX_1$.
\end{definition}

           \begin{figure}[h]
\includegraphics[scale=0.3]{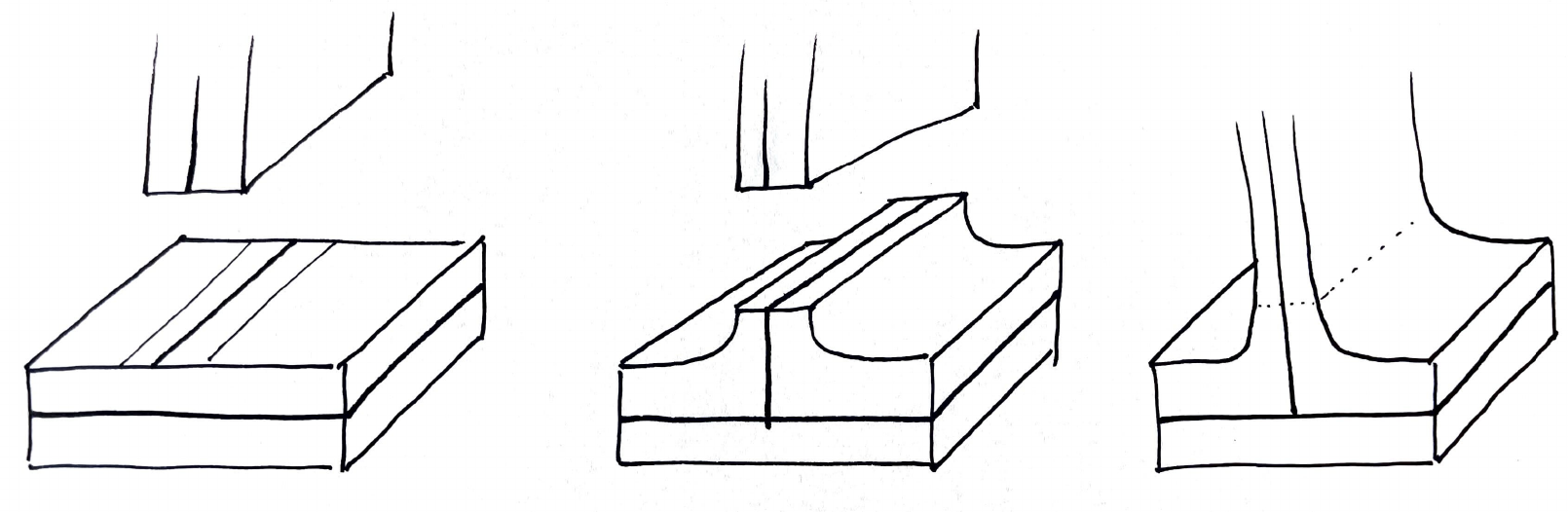}
\caption{Vertical gluing.}
\label{vertical-gluing}
\end{figure}

\begin{remark}\label{rem:uniq gluing}  The W-block $\sX$ obtained by the vertical gluing $\sX_2 \to \sX_1$ is independent of  the choice of a representative of the W-hypersurface $\sN\hookrightarrow \p_\infty \sX_1$ up to strong deformation equivalence of W-blocks, and we  usually do not specify this choice. It is also independent of all other choices involved in the construction up to homotopically canonical strong deformation equivalence, see Lemma \ref{lem:uniq conv}. 
\end{remark}

 Recall that  $\Skel((\sX_1)_{+\wt\sN})$ is  the union of the $\Skel(\sX_1)$ and the Liouville cone $C(\Skel(\wt \sN))$ over the skeleton of
$ \wt\sN$.  Hence,    $$\Skel (\sX_2\to\sX_1)=\Skel(\sX_1)_{+\wt\sN}\cup\Skel(\sX_2)=\Skel(\sX_1)\cup C(\Skel(\wt \sN))\cup\Skel(\sX_2).$$

Note also that the facets in the boundary $\p \sX$ of the vertical gluing $\sX=(\sX_2 \to \sX_1)$ 
are of three kinds:
\begin{enumerate}
\item A facet of $\sX_2$ disjoint from $\sP$.
\item A facet of $\sX_1$ disjoint from $j(\sN)$.
\item  A facet whose nucleus   is  a result of vertical gluing of the nucleus of a  facet of  $\sX_2$ to a facet of $\sX_1$; the gluing is done using an identification of the nucleus $\sM$ of a facet of $\sN$ with its image $\wt \sM = j(\sM)$ viewed as a W-hypersuface in the nucleus of the corresponding facet of $\sX_1$.
\end{enumerate}

  A similar description for the $k$-faces of $\sX=(\sX_2 \to \sX_1)$ may also be formulated.

 \begin{example}[Attaching along a smooth Legendrian]\label{lm:Leg-attach-ribbon}
 Let us focus on the case $ (\sX_1,\lambda_1)= (\sX,\lambda)$, $(\sX_2,\lambda_2)= (\sT^*M, pdq)$ where the second W-block is a cotangent block associated to  $M$ a smooth compact manifold with corners.

Any 1-face  $\sP$ of $\sT^*M$ is of the form  $\sT^*M|_Q$, where $Q\subset M$ is a 1-face, and its nucleus is  the cotangent block
 $(\sN, \lambda_\sN) = (\sT^*Q, pdq)$.

 Let $Q\hookrightarrow \partial_\infty \sX$ be a Legendrian embedding with representative  $Q\hookrightarrow \partial_v W$. According to Example \ref{ex:ribbon}, the embedding extends to a ribbon  $j_\infty: \sA \hookrightarrow \partial_\infty \sX$
 with  representative  $j: \sA \hookrightarrow \partial_v W$, where $ \sA $ is either the proper cotangent block $\sT^* Q$, when the embedding is pure, or more generally, the cotangent block  $(\sT^* Q)^{\bQ, \bigcirc}$, where $\bQ = \{ Q_1, \ldots, Q_k\}$   
   are the $1$-faces that fail the purity test of Definition~\ref{def:adLeg}.

 In both cases, in a  
 slight abuse of terminology, we will  call  $\sX \cup_J  \sT^*M$,   or more generally, 
 $\sX \cup_J  (\sT^*M)^{\bP,\bigcirc}$,  the vertical gluing of  the cotangent block $\sT^*M$ along the Legendrian embedding $Q\hookrightarrow \partial_\infty \sX$.
 
 When $\sX_1$ is a W-block without boundary (i.e. the germ of a Weinstein manifold without boundary) and $M=D^n$ with $Q=\partial D^n$, this recovers the notion of attaching a critical handle to a Weinstein manifold. Subcritical attachments correspond to $M=D^k \times D^{n-k}$ and $Q=D^k \times S^{n-k-1}$ for $k \geq1$. 
 \end{example}

 \section{W-buildings}\label{sec:W-bldg}
 
  \subsection{Definition of a W-building}\label{sec:def-W-bldg}
  
 
 
\begin{definition} A {\em $k$-story W-building}, denoted
  $$(\sX_k,\lambda_k)\to(\sX_{k-1},\lambda_{k-1})\to\dots \to(\sX_1,\lambda_1)$$
consists of the data of iterated vertical attaching from top to bottom of W-blocks:
 \begin{align*}
&\sX_{\geq k} =\sX_k, \, \, \sX_{\geq k-1}=(\sX_k\to\sX_{k-1}), \, \,
 \sX_{\geq k-2}=(\sX_{\geq k-1}\to\sX_{k-2}), \, \, \dots  \, \,
\sX_{\geq 1}=(\sX_{\geq 2}\to\sX_1).
 \end{align*}
 The structure of a $k$-story W-building on a W-block $(\sX,\lambda)$ consists of a $k$-story W-building $\sX_k  \to \cdots  \to \sX_1$ together with an isomorphism of W-blocks $\sX \simeq \sX_{\geq 1}$ which will often be implicit.
 \end{definition}
 
 One distinguished family of W-buildings is of particular importance:
 
 \begin{definition}
 A {\em proper cotangent building} $$\sT^*M_k \to \sT^*M_{k-1} \to \cdots \to \sT^*M_1$$ is a W-building in which the blocks are proper cotangent blocks $(\sT^*M_i, pdq)$ of compact manifolds with corners $M_i$ (see Example~\ref{ex:co-bl}). 
 
 A {\em cotangent building} 
 $$(\sT^*M_k)^{\bP_k, \bigcirc} \to (\sT^*M_{k-1})^{\bP_{k-1}, \bigcirc} \to \cdots \to (\sT^*M_1)^{\bP_{1}, \bigcirc}$$
  is a W-building in which the blocks are  cotangent blocks $((\sT^*M_i)^{\bP_i,\bigcirc},(pdq)^{\bP_i,\bigcirc})$, i.e. proper cotangent blocks $(\sT^*M_i, pdq)$ with some  faces  $\bP_i$ stripped (see Example~\ref{ex:gen-co-bl}). 
 \end{definition}
 
 \begin{example}
 Let $(W,\lambda)$ be the Weinstein domain obtained from a standard Lagrangian handle attachment to the standard Darboux ball $B$ along a Legendrian sphere $\Lambda \subset \p B$. We may choose a W-block deformation equivalence $$B \simeq (\sT^*D^n)^{\partial D^n, \bigcirc}.$$ Then the corresponding W-block $(\sX,\lambda)$ admits the structure of a 2-story  cotangent building $$\sT^*D^n \to (\sT^*D^n)^{\partial D^n, \bigcirc}$$
 which in fact arises by stripping the face of a 2-story cotangent building 
 $$ \sT^*D^n \to \sT^*D^n.$$
 If instead $(W,\lambda)$ is given by a subcritical attachment then up to deformation equivalence it also has the structure of a 2-story  cotangent building 
 $$\sT^*(D^{n-k} \times D^k)^{ D^{n-k} \times \partial D^k, \bigcirc} \to (\sT^*D^n)^{\partial D^n, \bigcirc}.$$
for some $k<0<n$, where the attachment is along a Legendrian $S^{n-k-1} \times D^k$.
 \end{example}
 
           \begin{figure}[h]
\includegraphics[scale=0.3]{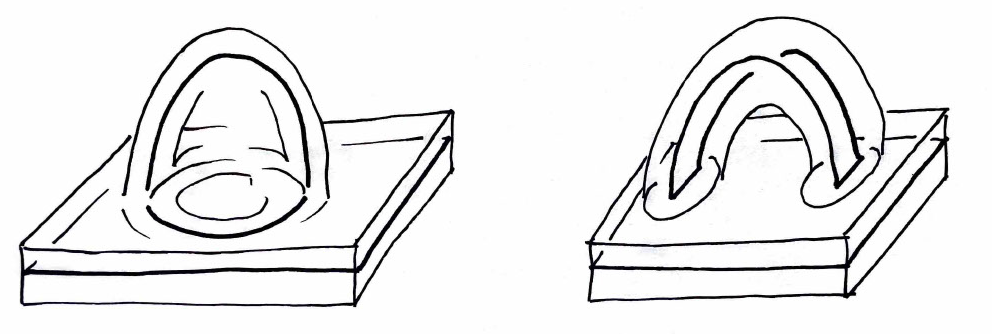}
\caption{A proper cotangent building and a non-proper cotangent building}
\label{proper-nonproper}
\end{figure}

 \subsection{Structure of a W-building}
 
 The structure of a W-building consists precisely of the collection of vertical attachments $\sX_{\geq j} \to \sX_{j-1}$, which inductively determine the W-blocks $\sX_{\geq j-1}$. This amounts to the following data. First note that $\sX_k$ is glued to $\sX_{k-1}$ along a facet $\sP_{k,k-1}$ of $\sX_k$ with nucleus $\sN_{\geq k-1}=\sN_{k,k-1}$ which is identified with a W-hypersurface $\wt\sN_{\geq k-1}=\wt\sN_{k,k-1}\subset\p_\infty \sX_{k-1}$. In turn $\sX_{\geq k-1}=(\sX_k\to\sX_{k-1})$ is glued to $\sX_{k-2}$ along its facet $\sP_{\geq k-1}$ with its nucleus $\sN_{\geq k-1}$ which itself is the result of vertical gluing $\sN_{k,k-2}\to \sN_{k-1,k-2}$  of nuclei (where either of these could be empty) of facets $\sP_{k, k-2}$ of $\sX_k$ and $\sP_{k-1,k-2}$ of $\sX_{k-1}$. The nucleus $\sN_{\geq k-1}$ is identified with a W-hypersurface
$\wt\sN_{\geq k-1}\subset\p_\infty\sX_{k-2}.$
 Continuing by induction, we see that $\sX_{\geq j}, j=k, k-1, \dots, 1$, is the result of vertical gluing  of $\sX_{\geq j+1}$ to $\sX_j$ along a facet $\sP_{\geq j+1}$ of  $\sX_{\geq j+1}$ with  the nucleus $\sN_{\geq j+1}$ of $ \sP_{\geq j+1}$ which has a  $(k-j)$-story building structure $$\sN_{k,j}\to\sN_{k-1,j}\to\dots\to \sN_{j+1,j}$$ of nuclei  $\sN_{i,j}$ of faces $\sP_{i,j}$ of blocks $\sX_i$, $i=k,\dots, j+1$. The nucleus $\sN_{\geq j+1,j}$ is identified with a W-hypersurface $\wt\sN_{j}\subset\p_\infty\sX_j.$
 
Note that the list of facets $\bP_i=[\sP_{i, i-1},\dots, \sP_{1,1}]$  of $\sX_i$ are admissible in the sense of Section \ref{sec:smooth-corners}.  We allow  facets $\sP_{i,j}$  to  be disconnected, but in that case the closed connected components are disjoint.  Facets $\sP_{i,j}$ and $\sP_{i,j'}$ for $j\neq j'$ could be either disjoint, or otherwise  adjacent to the same 2-face of $\sX_i$.

For each $\ell\leq j$ the W-block $\sX_{\geq j+1}$ has   also a facet $\sP_{\geq j+1,\ell}$ with nucleus  $\sN_{\geq j+1,\ell}$  which itself has a $(k-j)$-story  building structure resulted from successive  vertical gluing  $$(\dots(\sN_{k,\ell}\to \sN_{k-1,\ell})\to\dots)\to \sN_{j+1,\ell}.$$

Setting $\sN_\geq k=\varnothing$  we get     a  collection of    Liouville embeddings
 $$\Phi_j:=\Big((\sX_j)_{+\wt\sN_{  j}},(\lambda_j)_{+\wt \sN_{ j}} \Big) \to (\sX,\lambda)$$
 such that  $$\bigcup\limits_{j=1}^k\Phi_j\Big((\sX_j)_{+\wt\sN_j}\Big)=\sX.$$  We call the collection $\{\Phi_j\}_{j=1,\dots,k}$ a {\em W-atlas} of the building. Note that  $(\sX_j)_{+\wt\sN_j}\supset \sX_j$, and therefore, we also have  embeddings
 $\Phi_j|_{\sX_j}:(\sX_j,\lambda_j)\to (\sX,\lambda)$ (which do not cover $\sX$).

 Alternatively, the data of a W-building consists of the list of faces $\bP_j=[\sP_{j,j-1},\dots, \sP_{j,1}]$ of $\sX_j$ and their nuclei $(\sN_{j,j-1},\lambda_{j,j-1}:=\lambda|_{\sN_{j,j-1}}),\dots, (\sN_{j,1},\lambda_{j,1})$, W-blocks $(\sN_{\geq j},\lambda_{\geq j})$, presented as buildings $(\sN_{k,j-1},\lambda_{k,j-1})\to \dots\to(\sN_{j,j-1},\lambda_{j,j-1})$, and the realization of $\sN_{\geq j}$ as W-hypersurfaces in $\p_\infty \sX_{j-1}$. By Remark \ref{rem:uniq gluing} this sequence of W-blocks and W-hypersurfaces uniquely determines the resulting W-building up to homotopically canonical strong deformation equivalence.

          \begin{figure}[h]
\includegraphics[scale=0.3]{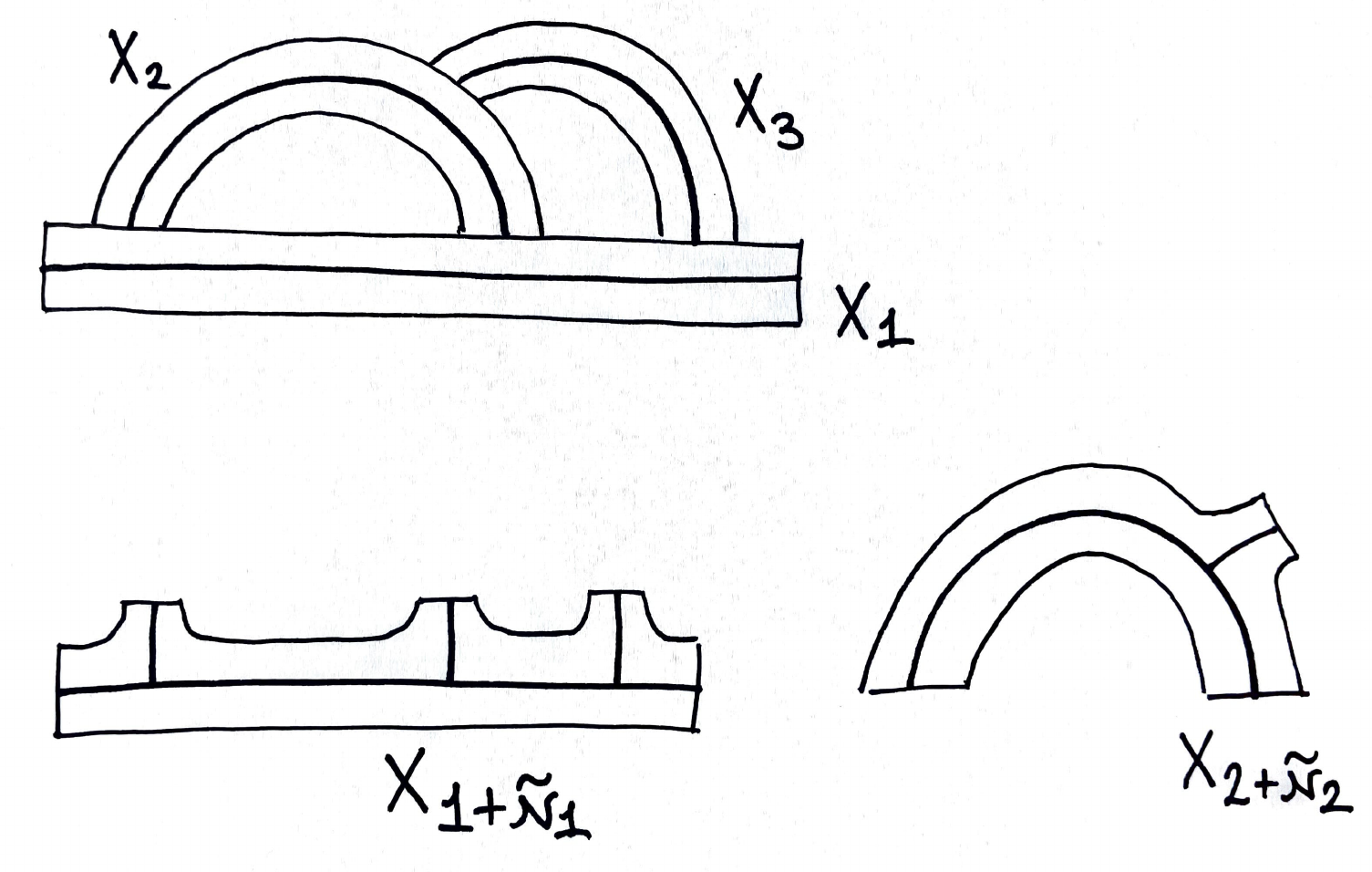}
\caption{An atlas for a W-building}
\label{atlas}
\end{figure}

\begin{remark}\label{lm:estimate-Liouville-field}  Suppose  a W-block $(\sX,\lambda)$ is presented as a $k$-story cotangent building $(\sT^*L_k\lambda_k)\to\dots\to (\sT^*_1,\lambda_1).$ Let $Z$ be the Liouville vector field determined by $\lambda$. Choose a background Riemannian metric on $\sX$. Let $||Z|| $ be the infimum of  the $C^0$-norm of   the vector field $Z $ over all defining domains of the corresponding W-block. Then   
  $$||Z||\leq   Ck\eps,$$ where $\eps>0$ is a constant entering the definition of vertical gluing, and  $C$ is a universal constant which depends only on the background metric.  In particular, there exists a strong homotopy  $(\sX,\lambda^\u)$, $u\in(0,\eps]$   of W-blocks  preserving the background  symplectic structure, together with the  cotangent  building structures, such that $\lambda^\eps=\lambda$ and $||Z^\eps|||\mathop{\to}\limits_{\eps\to 0} 0$. 
  
  To see this, first note that away from the attaching areas the Liouville vector field of   a cotangent building vanishes on the    skeleton. On the other hand in the attaching areas    the norm of the Liouville field  is bounded by $||C\eps||$, where $\eps$ is the parameter in Example \ref{ex:special-adj} (e) involved in the facet inversion. Up to a strong homotopy one can choose $\eps$ arbitrarily small without changing the background form (see Remark \ref{rem:modif-fixed-sympl}), and hence the claim follows.
 \end{remark}

  \subsection{Bottom-up construction of a W-building} While the building data  $$\sX=(\sX_k\to\dots\to\sX_1)$$ assumes a   ``top-down" construction of a W-block, the ``bottom-up" perspective is the more traditional viewpoint of Weinstein handlebody theory. Indeed, a W-building is given by successive vertical gluing
  $$(( \cdots (\sX_k\to \sX_{k-1}) \to \cdots ) \to \sX_2) \to\sX_1$$
  whereas a handlebody presentation can be viewed as successive vertical gluing $$\sX_k\to(\sX_{k-1}\to(\cdots\to(\sX_2\to\sX_1)\cdots))$$ where all $\sX_j$ are (generalized) cotangent blocks.  As we will see below, the building structure  has strictly more information. In particular, while the top-down construction of a building can always be  converted to a bottom-up construction, the opposite conversion requires additional constraints  on vertical attachments, as we now discuss.
 We begin with  the following lemma. 
    \begin{lemma}\label{lm:remains} Let $\sX $ be a W-block, and $\sA\subset\p_\infty\sX$ a W-hypersurface which has    a structure of a 2-story building $\sA_2\to\sA_1$, so that $\sA_1$ is also a W-hypersurface in $\p_\infty\sX$. Then  the block $\sX_{+\sA_1}\supset\sX_1$ contains a W-hypersurface $\wt \sA_2$  such that $C(\Skel(\wt\sA_2); Z_{+\sA_1})= C(\Skel (\sA)\setminus\Skel(\sA_1), Z)$.     \end{lemma}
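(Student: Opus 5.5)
The plan is to build $\wt\sA_2$ explicitly inside the collar region created by the W-hypersurface to nucleus conversion, and then verify the cone identity by following Liouville trajectories.

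\emph{Setup.} Since $\sA$ carries the building structure $\sA_2\to\sA_1$, it is by definition the horizontal gluing of $\sA_2$ to $(\sA_1)_{+\wt\sM}$. Here $\sM$ is the nucleus of the attaching facet $\sQ$ of $\sA_2$, and $\wt\sM\subset\p_\infty\sA_1$ is its image as a W-hypersurface. Choose a representative of $\sA$ in the vertical boundary $\p_vW$ of a defining domain $W$ of $\sX$. Its restriction to $\sA_1$ gives a defining domain $V$ for $\sA_1$, and we perform the construction of $\sX_{+\sA_1}$ with these choices. Near $V$ we have
\[
\lambda=\lambda_{\sA_1}-\tfrac{\eps}{5}ds ,
\]
with $s$ the normalized Reeb coordinate. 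The glued region $\wh Y_\sigma\subset V\times U$ carries the form $\lambda_{\sA_1}+\mu_\phi$ from Example~\ref{ex:special-adj}(${\rm e}'$).

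\emph{Construction.} The part of $\sA$ lying outside $\sA_1$ is $\sA_2$ together with the Liouville cone of $\Skel(\wt\sM)$ in $\sA_1$. It meets $\p V$ in a neighborhood of $\wt\sM$, as a collar $\sM\times\sT^*\cI$. In $\sX_{+\sA_1}$ the boundary of $W$ near $V$ is replaced by $S_\sigma\cap\{t\le \frac{4\eps}{5}\}$. I would therefore define $\wt\sA_2$ as follows:
\begin{enumerate}
\item keep $\sA_2$ unchanged away from $\Op(\Sigma_\sigma)$;
\item over the collar, transport it onto the new boundary by the piece $\{(x,s,t): x\in C(\wt\sM)\cap\{\psi=\sigma^2-s^2\}\}$, i.e. the part of $S_\sigma$ lying over the cone of $\wt\sM$ in $V$.
\end{enumerate}
On this piece the restriction of $\lambda_{\sA_1}+\mu_\phi$ is the product of the nucleus form on $\sM$ with a one-dimensional Liouville form. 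This is exactly the form of the collar $\sM\times\sT^*\cI$ of $\sA_2$ near its facet $\sQ$, up to a factor $f\in C(\lambda)$. Hence the two pieces glue to an embedding of $\sA_2$ itself, or of a collar extension as in Remark~\ref{rem:shrink-extend}, as a representative of a W-hypersurface in $\p_v\wt W_{+\sA_1}$. Its facet $\sQ$ is no longer glued but now runs out to the corner along $t=0$, as in the adaptedness condition of Definition~\ref{def:W-hyp}.

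\emph{Skeleton identity.} We have $\Skel(\wt\sA_2)=\Skel(\sA_2)$, which sits in the unchanged part of $\p_vW$. Its $Z_{+\sA_1}$-cone meets $J(Y'_\sigma)$ only along negative trajectories of $Z$, since $Z_{+\sA_1}=Z$ on $W$. The portion of the cone lying in $\wh Y_\sigma\setminus J(Y'_\sigma)$ is empty, because $\Skel(\sA_2)$ avoids $\Op(\Sigma_\sigma)$ once $\sigma$ is small. So
\[
C(\Skel(\wt\sA_2);Z_{+\sA_1})=C(\Skel(\sA_2);Z).
\]
Since $\Skel(\sA)=\Skel(\sA_1)\cup C(\Skel\wt\sM)\cup\Skel(\sA_2)$, the right-hand side agrees with $C(\Skel\sA\setminus\Skel\sA_1;Z)$, up to the cone over $C(\Skel\wt\sM)\setminus\Skel\wt\sM$. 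That cone is absorbed: in $\sX_{+\sA_1}$ it is precisely the cone over the portion of $\wt\sA_2$ near its new facet, whose nucleus skeleton is $\Skel(\sM)$. It is therefore also a trajectory set of $Z_{+\sA_1}$ through $\Skel(\wt\sA_2)$ after the collar extension.

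\emph{Main obstacle.} The hardest step is the matching in the construction: checking that the transported collar is a genuine adapted W-hypersurface, i.e. that the pulled-back form is $f\lambda_{\sA_2}$ with $df(Z)>-1$, and that the corner at $J(\p\Sigma_\sigma)$ can be smoothed compatibly. I would first attempt this using the product structure of model (${\rm e}'$), the split potential, and Proposition~\ref{lm:strong-hom-hyper} to absorb the resulting strong homotopy.
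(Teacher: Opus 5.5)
Your underlying idea is the paper's own proof. That proof is the single observation that the image of $\sA_2$ in $\sA$ misses $\sA_1$, and so survives the conversion as a W-hypersurface of $\sX_{+\sA_1}$. The extra detail you add, however, contains a genuine inconsistency in the skeleton step.

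\textbf{The skeleton step.} Your $\wt\sA_2$ has its facet $\sQ$ on the new facet $\{t=0\}$. Being adapted, it contains the collar $\sM\times\sT^*\cI$ along your piece (2). So $\Skel(\wt\sA_2)\supset\Skel(\sM)\times\cI$, and this part lies in $\wh Y_\sigma\setminus J(Y'_\sigma)$. Three of your claims therefore fail for your own construction:
\begin{itemize}
\item that $\Skel(\wt\sA_2)$ sits in the unchanged part of $\p_vW$;
\item that its cone misses $\wh Y_\sigma\setminus J(Y'_\sigma)$;
\item that $C(\Skel(\wt\sA_2);Z_{+\sA_1})=C(\Skel(\sA_2);Z)$.
\end{itemize}
Conversely, if $\wt\sA_2$ really stayed in the unchanged part, nothing would produce the missing cone over $C(\Skel(\wt\sM))$, and the ``absorption'' sentence would have no content.

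What makes the identity true is a long collar. It must run over all of $C(\wt\sM)$ outside the lens: through the $\wt\sM$-conversion region of $\sA$ and the strip $V\setminus V_{\sigma^2}$, then across the corner and down the sides. Its skeleton $\Skel(\sM)\times(\text{collar})$ traces $C_{\sA}(\Skel(\wt\sM))\setminus V_{\sigma^2}$ on $\p_vW$, where $Z_{+\sA_1}=Z$. On the sides, at $s=0$, the field $Y_\phi$ of model (${\rm e}'$) has $\p_t$-component $\theta'(\eps-t)/\Delta_\phi<0$ for $\eps/2<t<\eps$. Hence backward trajectories from the sides part of the skeleton enter $J(Y'_\sigma)$ and sweep out exactly $C(C(\Skel(\wt\sM))\cap V_{\sigma^2};Z)$. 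Together with $C(\Skel(\sA_2);Z)$ this gives the right-hand side; the left-hand side also contains the new collar region, which is not in $\sX$.

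\textbf{The gluing step.} This also needs more than you say. Pieces (1) and (2) are not adjacent: $\sA_2$ lies beyond the $\wt\sM$-conversion region, far from $\Sigma_\sigma$, so the intermediate part of $\sA$ must be kept as above.

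Moreover, piece (2) meets the corner $J(\p\Sigma_\sigma)$ along $E=\{x\in C(\wt\sM),\ \psi(x)+s^2=\sigma^2\}$. At $s=0$ its non-$\sM$ tangent direction is $\p_s$, because $\psi$ is stationary there; $\p_s$ is the Reeb direction of $\p_vW$. On $\p_vW\setminus\Int J(\Sigma_\sigma)$, a W-hypersurface must be transverse to the Reeb field, since that field spans $\ker d\lambda|_{\p_vW}$. So piece (2) cannot be continued straight across the corner. Inside the smoothed corner, its tangent plane must rotate from $\langle\p_s,\p_t\rangle$ (plus the $\sM$-directions) to a plane transverse to $\p_s$, for instance the tangent plane of $\sA$.

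A kernel computation on the smoothed corner shows this rotation is possible near $s=0$. That suffices, because W-hypersurfaces are germs along their skeleta. Proposition~\ref{lm:strong-hom-hyper}, which you invoke for this step, does not address it.
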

 \begin{proof} We may take $\wt \sA_2$ to be the image of $\sA_2$ via the embedding $\sA_2 \subset (\sA_2 \to \sA_1)= \sA \subset \p_\infty \sX$ which is disjoint from $\sA_1 \subset \p_\infty \sX$ and therefore may be regarded as a W-hypersurface in $\sX_{+\sA_1}$.
 \end{proof}
     We will refer to the W-hypersurface $\wt\sA_2$ as the {\em remnant} of $\sA$ after converting $\sA_1$ into a face nucleus.

  \begin{prop}\label{prop:bldg-bottom-up}
 Consider a  W-block $(\sX,\lambda)$ presented as a $k$-story  W-building $$\sX_k\to\cdots\ \to \sX_1.$$   Let  $\sX_i^{\frown,\bP_i}$ be the W-block obtained from $\sX_i$ by smoothing corners in the list $\bP_i=[\sP_{i,i-1},\dots,\sP_{i,1}]$, let  $\sP_{\bP_i}^{\frown}$ be the new facet  resulted from the smoothing and  let  $\sN_{\bP_i}^{\frown}$ be  its nucleus.   
 Then for 
  each $i \leq k$  there exist
  \begin{itemize}
  \item  W-blocks $\sX^{\leq 1}=\sX_1 \subset\sX^{\leq 2}\subset \dots\subset  \sX^{\leq k-1}\subset \sX^{\leq k}=\sX$;
  \item for each $i=2,\dots, k$  embeddings   $\phi_i: \sN_{\bP_i}^{\frown}\to \wt\sN_{\bP_i}^{\frown}\hookrightarrow \p_\infty\sX^{\leq i-1}$ as Weinstein hypersurfaces.
 \end{itemize}  such that for each $i=1,\dots, k$ the block $\sX^{\leq i}$  has a building structure $\sX_i\to\dots\to\sX_1$, and for
  each $i=2,\dots, k$ the block $\sX^{\leq i}$ is the result of vertical gluing of $\sX_i^{\frown,\bP_i}$ to $\sX^{\leq i-1}$  using 
   $\phi_i: \sN_{\bP_i}^{\frown}\to \wt\sN_{\bP_i}^{\frown}\hookrightarrow \p_\infty\sX^{\leq i-1}$.
    \end{prop}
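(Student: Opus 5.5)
We argue by induction on the number of stories $k$, converting the top-down gluing into a bottom-up one story at a time. For $k=1$ there is nothing to prove. For $k=2$ the building $\sX_2\to\sX_1$ is, by definition, the vertical gluing of $\sX_2$ along the nucleus $\sN_{2,1}$ of $\sP_{2,1}$. Since $\bP_2=[\sP_{2,1}]$ consists of a single facet, $\sX_2^{\frown,\bP_2}=\sX_2$ and $\sN^\frown_{\bP_2}=\sN_{2,1}$. So we may take $\sX^{\leq 1}=\sX_1$, $\sX^{\leq 2}=\sX$, and $\phi_2=j$.

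For the inductive step, define $\sX^{\leq i}$ as the building $\sX_i\to\dots\to\sX_1$ obtained by discarding the top stories $\sX_k,\dots,\sX_{i+1}$. Concretely, $\sX^{\leq i}$ comes from the same iterated vertical gluings, with every nucleus $\sN_{\ell,j}$ having $\ell>i$ deleted from the buildings $\sN_{\geq j+1}$. These truncated nuclei are sub-buildings, and we check that they are still W-hypersurfaces in $\p_\infty\sX_j$ (the opposite endpoint of Lemma~\ref{lm:remains}). The W-atlas $\{\Phi_j\}$ then gives the embeddings $\sX^{\leq i-1}\subset\sX^{\leq i}$: the image of $\Phi_j$ restricted to the truncated data sits inside the image of the full data.

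The core claim is that $\sX^{\leq i}$ is obtained from $\sX^{\leq i-1}$ by vertically gluing $\sX_i^{\frown,\bP_i}$. In the top-down description, $\sX_i$ meets the lower blocks through the facets $\sP_{i,i-1},\dots,\sP_{i,1}$. Its nucleus $\sN_{i,i-1}$ is glued as a W-hypersurface into $\p_\infty\sX_{i-1}$. Each $\sN_{i,j}$ with $j<i-1$ is the top floor of the building $\sN_{\geq j+1}$, which is attached to $\sX_j$ along the facet $\sP_{\geq j+1}$.

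We use Lemma~\ref{lm:remains} repeatedly, descending $j=i-1,i-2,\dots,1$. After the face nucleus conversions that build $\sX^{\leq i-1}$, the remnant of each $\sN_{i,j}$ becomes a W-hypersurface in $\p_\infty\sX^{\leq i-1}$, and the remnants of $\sN_{i,j}$ and $\sN_{i,j'}$ abut along the remnants of the nuclei of the 2-faces $\sP_{i,j}\cap\sP_{i,j'}$. By the description of smoothing corners of W-blocks in Section~\ref{sec:smooth-corners-weinstein}, $\sN^\frown_{\bP_i}$ is the iterated horizontal gluing of $\sN_{i,i-1},\dots,\sN_{i,1}$ along exactly these 2-face nuclei. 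Admissibility of $\bP_i$ guarantees that the gluing is well defined. Hence the remnants assemble into an embedding $\phi_i:\sN^\frown_{\bP_i}\hookrightarrow\p_\infty\sX^{\leq i-1}$ as a W-hypersurface.

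The step I expect to be the main obstacle is showing that the remnant pieces fit together smoothly and adaptedly as one W-hypersurface. Along a 2-face, the remnant of $\sN_{i,j}$ lives in a collar $\sM\times\sT^*\cI$ inside the converted face region of $\sX_{j'}$, and it must match the piece coming from $\sX_j$. My first attempt would be to write both sides in the split coordinates $\lambda_\sM+s_1dt_1+s_2dt_2$. The horizontal gluing of nuclei then corresponds to the corner-smoothing curve $t_1+t_2=\theta(t_1-t_2)$ pushed into the ideal boundary. To justify this I would use the explicit models (e${}'$), (g) of Example~\ref{ex:special-adj}, interpolating the Liouville forms, and choose the smoothing parameter $\sigma$ small relative to $\eps$.

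Finally, gluing $\sX_i^{\frown,\bP_i}$ to $\sX^{\leq i-1}$ via $\phi_i$ reproduces $\sX^{\leq i}$ up to strong deformation equivalence, because both constructions use the same blocks and skeleta, namely $\Skel(\sX^{\leq i-1})\cup C(\Skel\wt\sN^\frown_{\bP_i})\cup\Skel(\sX_i)$. The remaining choices are absorbed by Lemma~\ref{lem:uniq conv} and Remark~\ref{rem:uniq gluing}.
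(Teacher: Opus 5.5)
Your proposal is correct in outline and uses the same ingredients as the paper's proof: truncated buildings $\sX^{\leq i}$, remnants from Lemma~\ref{lm:remains}, and the identification of $\sN^\frown_{\bP_i}$ with the iterated horizontal gluing of the nuclei $\sN_{i,j}$ along 2-face nuclei. The organization differs.

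The paper first does the 3-story case, where only two pieces are glued: the remnant $\wt\sN_{3,1}\subset\p_\infty(\sX_1)_{+\wt\sN_{2,1}}$ and $\wt\sN_{3,2}\subset\p_\infty\sX_2$. It then inducts by peeling off the bottom floor. It applies the induction hypothesis both to $\sX_{\geq 2}$ and to the $(k-1)$-story nucleus building $\sN_{\geq 2,1}$. It then regards $\sX^{\leq i}$ as the 3-story building $\sX_i^{\frown,\bP_{i,\geq 2}}\to\sX^{\leq i-1}_{\geq 2}\to\sX_1$ and invokes the 3-story case. So every step glues exactly two pieces, mirroring $\sX^{\frown,\bP}=(\sX^{\frown,\bP'})^{\frown,\bP/\bP'}$.

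You instead assemble all $i-1$ remnants at once. This gives a clearer picture of $\wt\sN^\frown_{\bP_i}$, but you must control simultaneously all corners where three or more remnants meet, and it hides where induction enters. As written, your inductive hypothesis is never used, yet it is needed. For $j<i-1$, Lemma~\ref{lm:remains} applies only to a 2-story structure whose bottom is $\wt\sN^{\leq i-1}_{\geq j+1,j}$. The given structure $\sN_{i,j}\to\cdots\to\sN_{j+1,j}$ is top-down, not of that form. Presenting it bottom-up, with $\sN_{i,j}$ corner-smoothed as the top floor, is the present proposition for a building with $i-j\leq k-1$ stories. That is exactly what the paper's appeal to the induction hypothesis for $\sN_{\geq 2,1}$ supplies, and you should invoke it at this point.

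Two smaller remarks. First, the step you single out as the main obstacle needs no explicit models or smoothing curves in the ideal boundary. W-hypersurfaces are adapted to the split collars, so horizontally gluing the ambient pieces along faces automatically glues the adapted remnants horizontally along the common nuclei. The identification with $\sN^\frown_{\bP_i}$ is then just the description of smoothed nuclei in Section~\ref{sec:smooth-corners-weinstein}; this is how the paper phrases it. Second, in your last step, equality of skeleta is not by itself a reason for strong deformation equivalence. The point is that both W-blocks are horizontal unions of the same converted pieces along the same faces; Lemma~\ref{lem:uniq conv} and Remark~\ref{rem:uniq gluing} then absorb the remaining choices.
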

    
        \begin{figure}[h]
\includegraphics[scale=0.3]{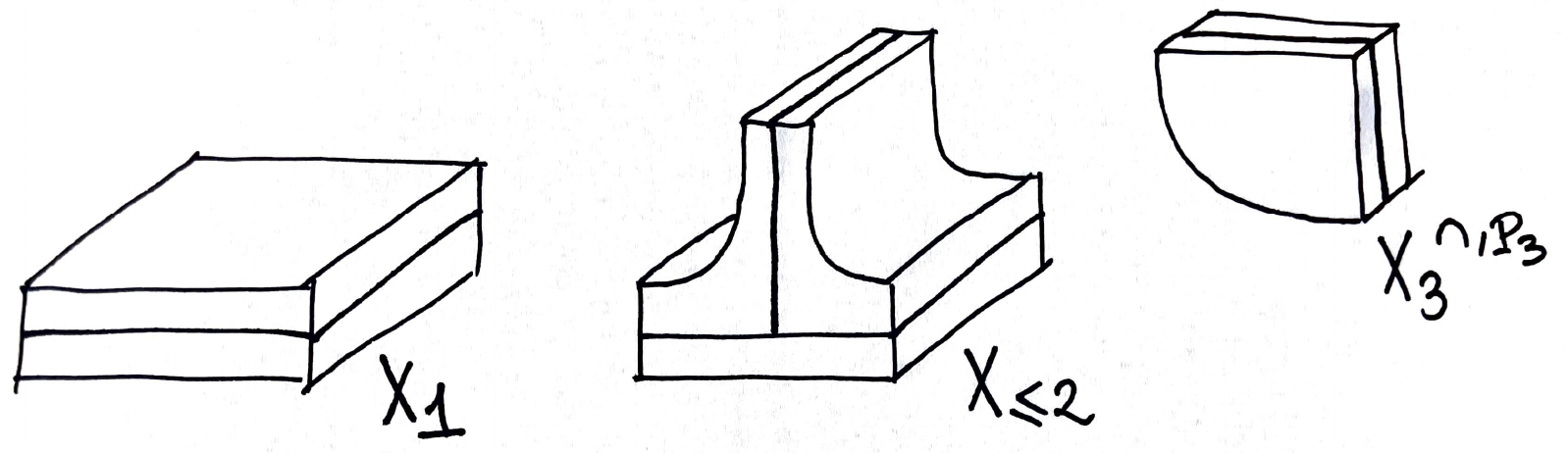}
\caption{Bottom-up perspective on a three-block building}
\label{down-up}
\end{figure}

       \begin{proof}[Proof of Proposition \ref{prop:bldg-bottom-up}] Consider first the case of a  3-story building, i.e. let $$\sX= (\sX_3\to\sX_2\to\sX_1).$$ Then $\sX$ is obtained by    vertically gluing  $\sX_{\geq2}$ to $\sX_1$ along its facet $\sP_{\geq 2}$ with an identification   $\phi_{\geq 2}:\sN_{\geq 2}\to \wt\sN_{\geq 2}$  for  a W-hypersurface $\wt\sN_{\geq 2}\subset\p_\infty\sX_1$. In turn, $\sX_{\geq 2}$ is the result of vertical guing of $\sX_3$ to $\sX_2$  via $\phi_3:\sN_{3,2}\to\wt\sN_{3,2}\hookrightarrow\p_\infty\sX_2$.
By construction $\sN_{\geq 2}$ has a structure of a 2-story building $\sN_{3,1}\to \sN_{2,1}$, so $\sN_{2,1}\subset \sN_{\geq2}$.  Denote $\wt \sN_{2,1}:=\phi_{\geq 2}(\sN_{2,1}) \subset \p_\infty \sX_1$. 

Let $\sX^{\leq2}$ be the 2-story building constructed by vertical gluing of $\sX_2$ to $\sX_1$ using $$\phi_{\geq2}|_{\sN_{2,1}}:\sN_{2,1}\to\wt\sN_{2,1}\hookrightarrow\p_\infty\sX_1.$$ Equivalently, 
\begin{equation}\label{eq: gluing}
\sX^{\leq2}=(\sX_1)_{+\wt\sN_{2,1}}\mathop{\cup}\limits_{\sP_{2,1}}\sX_2 \end{equation}
where $(\sX_1)_{+\wt\sN_{2,1}}$ is the result of conversion of the W-hypersurface  $\wt \sN_{2,1}$ into a nucleus of a facet. 

Let $\wt\sN_{3,1}\subset \p_\infty(\sX_1)_{+\wt\sN_{2,1}}$ be the remnant of $\wt\sN_{\geq 2}$ after the conversion.  The horizontal gluing \label{eq: gluing} also horizontally glues the W-hypersurfaces $\wt\sN_{3,1}\subset \p_\infty(\sX_1)_{+\wt\sN_{2,1}}$ and $\wt\sN_{3,2}\subset \p_\infty\sX_2$ into a W-hypersurface $\wt\sN$ which is  Liouville isomorphic to   $\sN^\frown_{\bP_3}$, the nucleus of the facet
  $\sP^\frown_{\bP_3}$ of $\sX_3^{\frown,\bP_3}$. Then the W-block resulted from the vertical gluing of $\sX_3^{\frown,\bP_3}$ to $\sX^{\leq 2}$ via the identification $\sN^\frown_{\bP_3}\to \wt\sN$ is strongly deformation equivalent to $\sX$.
  
 We continue by induction on 
 the number  $k$ of stories.
    Let us assume that the proposition  is proven for W-blocks of    height  $<k$.
    In particular, applying the induction hypothesis to bulldings $\sX_{\geq 2}=(\sX_k\to\dots \to \sX_2)$  we can  define   W-blocks
    $\sX^{\leq i}_{\geq2 }$ with the building structure $(\sX_i\to\cdots\to \sX_2)$, such that for $i>2$ the W-block  $\sX_{\leq i}^{\geq 2}$ can be constructed by vertical gluing of $\sX_i^{\frown,\bP_{i,\geq 2}}$ to   $\sX^{\leq i-1}_{\geq 2}$ using a Liouville embedding $\phi_{i,2}:\sN^{\frown}_{\bP_{i,\geq 2}}\to\p_\infty\sX^{\leq i-1}_{\geq 2}$. 
    
    Here $\bP_{i,\geq 2}:=[\sP_{i,i-1},\dots,\sP_{i,2}]$ is the sublist of the list $\bP_i$ of facets of $\sX_i$, 
    $\sX_i^{\frown,\bP_{i\geq 2}}$ is the result of smoothing corners of $\sX_i$ in the list $\bP_{i,\geq 2}$, and $\sP^\frown_{\bP_{i,\geq 2}}\subset \sX_i^{\frown,\bP_{i\geq 2}}$ is the smoothed facet with $\sN^\frown_{\bP_{i,\geq 2}}\subset \sP^\frown_{\bP_{i,\geq 2}}$ its nucleus. 
 
   By definition, $\sX$ is the result of vertical gluing of $\sX_{\geq 2}$ to $\sX_1$ along its  facet $\sP_{\geq 2,1}$ by identifying its nucleus $\sN_{\geq 2,1}$ with a W-hypersurface $\wt\sN_{\geq 2,1}$ of $\sX_1$. In turn, $\sN_{\geq 2,1}$ itself had a $(k-1)$-story building structure $\sN_{k,1}\to\dots\sN_{k,2}$, and hence, by the induction hypothesis there are already defined W-buildings   $\sN_{\geq 2,1}^{\leq i}\subset \sN_{\geq 2,1}$, $i=2,\dots, k$, which have W-building presentation $\sN_{i,1}\to\dots\to\sN_{2,1}, \; i=2,\dots, k$. Similarly, we have  W-hypersurfaces   $\wt \sN_{\geq 2,1}^{\leq i}\subset \wt \sN_{\geq 2,1}\subset \p_\infty\sX_1$. We define $\sX^{\leq i}$ by vertically gluing  $\sX^{\leq i}_{\geq 2}$ to $\sX_1$ by identifying the nucleus   $\sN_{\geq 2,1}^{\leq i}$ with the W-hypersurface   $\wt \sN_{\geq 2,1}^{\leq i}\subset\p_\infty\sX_1$. By induction,  $\sX^{\leq i}_{\geq 2}$ can be constructed by  vertically gluing $(\sX_i)^{\frown,\bP_{i,\geq 2}}$ to $\sX^{\leq i-1}_{\geq 2}$. Hence,  the W-block  $\sX^{\leq i}$ has a 3-story building structure $\sX_i^{\frown,\bP_{i,\geq 2}}\to\sX^{\leq i-1}_{\geq 2}\to\sX_1$. Let  $\bP:=[\sP^{\frown}_{\bP_{i,\geq 2}},\sP_{i,i}]$ be  the list  of 2-facets  of $\sX_i^{\frown,\bP_{i,\geq 2}}$.  We recall that $\sP^{\frown}_{\bP_{i,\geq 2}}$ is the facet resulted from smoothing the corners in the list $[\sP_{i,i-1},\dots,\sP_{i,2}]$, and thus $(\sX_i^{\frown,\bP_{i,\geq 2}})^{\frown,\bP}= \sX_i^{\frown,\bP_{i}}$. Hence, by the base of the induction case of 3-blocks we conclude that    $\sX^{\leq i}$  can be constructed by  vertically   attaching to $\sX^{\leq i-1}=(\sX^{\leq i-1}_{\geq2}\to\sX_1)$ 
the W-block  $\sX_i^{\frown,\bP_{i}}$ using the identification of the nucleus $\sN^\frown_{\bP_i}$ with a W-hypersurface $\wt\sN^\frown_{\bP_i}$
which is the result of the horizontal gluing of W-hypersurfaces $\wt \sN^\frown_{\bP_{i,\geq 2}}\subset \p_\infty\sX^{\leq i-1}_{\geq2}$ and
$\wt\sN_{i,1}\subset \p_\infty(\sX_1)_{+\wt\sN_{i-1,\geq 2}}$, which is the remnant of $\wt\sN_{i,\geq 2}\subset  \p_\infty\sX_1$ after converting
$\wt\sN_{i-1,\geq 2}$ into  a facet  nucleus.
         \end{proof}
  
   \subsection{W-hypersurfaces and Legendrians compatible with a building structure}
  For a 2-story building the top-down and bottom-up constructions are tautologically the same. However, already for the 3-story case any building, which by definition is given by successive top-down gluing, and hence according to Proposition  \ref{prop:bldg-bottom-up} can also  be constructed  by successive bottom-up gluing, the converse is  not  true unless one make some  additional assumptions on the W-hypersurfaces along which the blocks are vertically attached.
 \begin{definition}  
  Suppose a W-block $(\sX,\lambda)$ is presented as a $k$-story  W-building $$\sX=(\sX_k\to\cdots\to \sX_1)$$ and $\Phi_j:(\sX_j)_{+\sN_{\geq j+1,j}}\to\sX$, $j=1,\dots, k$ is the corresponding W-atlas.
 \begin{itemize}
 \item[1.] A W-hypersurface $\sA\subset\p_\infty\sX$  is called  {\em compatible} with the building structure, if $\Phi_j^{-1}(\sA)$ is a W-hypersurface in $(\sX_j)_{+\sN_j}$ for each $j=1,\ldots, k$.
 \item[2.] A Legendrian $\Lambda\subset\p_\infty\sX$ is called  {\em compatible} with the building structure, if $\Phi_j^{-1}(\Lambda)$ is a Legendrian in $(\sX_j)_{+\sN_{\geq j+1,j}}$ for each $j=1,\ldots, k$.
 \end{itemize}
 \end{definition}
 The following two lemmas are straightforward:
 \begin{lemma}\label{lm:comp-Leg-W}
 A compatible Legendrian admits a compatible ribbon. 
 \end{lemma}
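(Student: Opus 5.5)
The plan is to build the compatible ribbon chart by chart through the W-atlas $\{\Phi_j\}$. We then glue the local ribbons, using the contractibility of the space of ribbons of a given Legendrian recorded in Example~\ref{ex:ribbon}. Let $\Lambda\subset\p_\infty\sX$ be compatible. For each $j$, the preimage $\Lambda_j:=\Phi_j^{-1}(\Lambda)$ is by assumption an (adapted) Legendrian in the W-block $(\sX_j)_{+\sN_{\geq j+1,j}}$. Example~\ref{ex:ribbon}, together with Remark~\ref{rem: stripping not pure} in the non-pure case, gives a ribbon $\sA_j\supset\Lambda_j$ there, i.e.\ a W-hypersurface modelled on a (possibly stripped) cotangent block of $\Lambda_j$.

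The key point is that adaptedness forces compatibility on overlaps. Two charts $\Phi_j$ and $\Phi_{j'}$ overlap only in the regions created by vertical gluing, namely the collar neighbourhoods of the newly created facets with nuclei $\sN_{\geq j+1,j}$. On such a collar there is a splitting $\sN\times\sT^*\cI$. Adaptedness of $\Lambda_j$ (Definition~\ref{def:adLeg}(iii)–(iv)) says that $\Lambda$ there is a product $\Lambda_\sN\times\cI$, with $\Lambda_\sN$ a Legendrian in the nucleus. We therefore construct the ribbons inductively, downwards in $j$ and inside the nuclei first. On each nucleus we choose a ribbon $\sA_\sN$ of $\Lambda_\sN$, itself compatible with the building structure of $\sN$, by induction on dimension and on the number of stories. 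On the collar we then take the product ribbon $\sA_\sN\times\sT^*\cI$, which is adapted in the sense of Definition~\ref{def:W-hyp}(iii)–(iv). Finally we extend over the rest of the chart. This extension is possible because the space of ribbons of $\Lambda_j$ restricting to a prescribed adapted ribbon near the boundary faces is still nonempty and contractible: ribbons are unique up to contractible choice by a relative Weinstein neighbourhood argument.

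After these choices, the ribbons agree on all overlaps $\Phi_j\cap\Phi_{j'}$. They therefore assemble to a W-hypersurface $\sA\subset\p_\infty\sX$ with $\Phi_j^{-1}(\sA)=\sA_j$ for each $j$, so $\sA$ is compatible with the building structure.

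The main obstacle is the relative extension step. We need the ribbon to be prescribed simultaneously near several faces, including corners where the faces of different nuclei meet, with the required splittings and compatible collar coordinates. I would handle this with a relative version of the Legendrian neighbourhood theorem, applied inductively over the corner strata from the deepest corners outward. At each stage we use convexity of the space of extensions in a standard contact neighbourhood $J^1\Lambda$ to interpolate between the prescribed product ribbon and an arbitrary ribbon of $\Lambda_j$.
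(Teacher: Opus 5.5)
The paper gives no proof of this lemma; it only says it is straightforward. Your argument is the natural way to fill it in, and it is correct. Because a compatible $\Lambda$ is adapted to every face of every chart $(\sX_j)_{+\wt\sN_j}$, it is a product across each gluing interface and near the corners where several charts meet. So choosing ribbons of the nuclei first (inductively over corner strata and stories), taking product ribbons on the collars, and extending over each chart by the relative version of the contractibility of ribbons from Example~\ref{ex:ribbon} produces a ribbon whose restriction to every chart is an adapted W-hypersurface.
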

 \begin{lemma} \label{lm:bldg-after} Suppose a  $2n$-dimensional W-block $\sX $ is presented as a $k$-story  W-building $\sX_k\to\cdots\to \sX_1$ and let $\Phi_j:(\sX_j)_{+\sN_{\geq j+1,j}}\to\sX$, $j=1,\dots, k$ be the corresponding W-atlas.   Let $\sY$ be a W-block,   $\sP$ a facet of $\sY$ and   $\sM$    the  nucleus of $\sP$. Let
 $\phi:\sM\to\sA\hookrightarrow \p_\infty\sX$ be an embedding of $\sM$ as a  W-hypersurface $\sA$ of $\sX$ compatible  with the building structure of $\sX$. Denote $\sM_j:=\phi^{-1}\Phi_j(\p_\infty(\sX_j)_{+\sN_{\geq j+1,j}}).$  Then   
 \begin{itemize}
 \item $\sM=(\dots((\sM_k\cup\sM_{k-1})\cup\sM_{k-2})\cup\dots)\cup\sM_1$ is an admissible decomposition of $\sM$, and   $\sH_{k-1}:=\sM_k\cap\sM_{k-1}, \sH_{k-2}:=(\sM_k\cup\sM_{k-1})\cap \sM_{k-2},\dots ,\sH_1=\bigcup\limits_{j=2}^k\sM_k\cap \sM_1$ is the corresponding dividing collection;
  \item Let $\wh \sX$ be  the result of the vertically attaching $\sY$ to $\sX$ via the identification $\phi:\sM\to\sA\hookrightarrow \p^\infty\sX$. Then
  $\wh\sX $  has a building structure  $\sY^{\wedge,\bH}\to \sX_k\to\dots\to \sX_1$ where $\bH=\{\sH_1,\ldots,\sH_k\}$.
\end{itemize} 
  \end{lemma}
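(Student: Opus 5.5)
The plan is to prove the two bullets in order. The first bullet is a local statement about the pullback of the atlas to $\sM$. The second follows from it by ramifying the face $\sP$ of $\sY$ along the dividing collection and then regluing story by story, working from the top down.

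\emph{First bullet.} By compatibility, each $\Phi_j^{-1}(\sA)$ is a W-hypersurface in $(\sX_j)_{+\sN_{\geq j+1,j}}$. Hence $\sM_j$ is a W-block embedded via $\phi$ as a W-hypersurface of that block of the atlas.

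The images of the atlas charts $\Phi_j$ and $\Phi_{j'}$ overlap only along the collars of the converted faces $\sP_{\geq j+1,j}$. There the structure is split, of the form $\sN_{\geq j+1,j}\times\sT^*\cI$.

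Adaptedness of $\Phi_j^{-1}(\sA)$ (Definition~\ref{def:W-hyp}(iii),(iv)) applies at the face $\sP_{\geq j+1,j}$ of $(\sX_j)_{+\sN_{\geq j+1,j}}$. It shows that near this face $\sA$ splits as $\sA\cap\sN_{\geq j+1,j}$ times $\sT^*\cI$. Therefore
$$\sH_j:=\Big(\bigcup_{i>j}\sM_i\Big)\cap\sM_j$$
is the germ $\{t=0\}$ of a hypersurface carrying a product neighborhood $\sN_{\sH_j}\times\sT^*(-\eps,\eps)$. This is exactly a splitting hypersurface of $\bigcup_{i\geq j}\sM_i$.

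Inducting downward in $j$ then gives the admissible decomposition and the dividing collection $[\sH_1,\dots,\sH_{k-1}]$.

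\emph{Second bullet.} Apply corner ramification (the construction of $\sY^{\wedge,\bH}$) to the facet $\sP$ of $\sY$ with splitting list $\bH$. This produces facets $\sP_k,\dots,\sP_1$ with nuclei $\sM_k,\dots,\sM_1$. Smoothing back the corners between these facets gives a block strongly deformation equivalent to $\sY$. By Remark~\ref{rem:uniq gluing}, it therefore suffices to show that vertically attaching $\sY^{\wedge,\bH}$ through all the facets $\sP_j$ simultaneously is the same as attaching $\sY$ along $\sA$, and that the former is a building.

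The building structure is constructed top-down:
\begin{itemize}
\item First glue $\sY^{\wedge,\bH}$ to $\sX_k$ along $\sP_k$, using the hypersurface $\Phi_k^{-1}(\sA)\supset\sM_k$.
\item Then pass to $\sX_{\geq k-1}$. Its face $\sP_{\geq k,k-1}$ now has a nucleus obtained by vertically gluing the nucleus $\sM_{k-1}$ of $\sP_{k-1}$ onto $\sN_{\geq k,k-1}$. This is the type (3) facet in the list of facets of a vertical gluing in Section~\ref{sec:vert-gl}.
\item Continue in the same way at each lower story.
\end{itemize}

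At each stage one checks that the W-hypersurface $\wt\sN_{j}\subset\p_\infty\sX_j$ is enlarged by the piece $\Phi_j^{-1}(\sA)\cap\p_\infty\sX_j$. One also checks that this piece is glued along $\sH_j$ to the previously attached nucleus. This is the same remnant and horizontal-gluing mechanism as in Lemma~\ref{lm:remains} and the proof of Proposition~\ref{prop:bldg-bottom-up}, run in reverse.

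The resulting $(k+1)$-story building $\sY^{\wedge,\bH}\to\sX_k\to\dots\to\sX_1$ has total space obtained from $\sX$ by converting the whole of $\sA$ into a face nucleus and gluing in $\sY$ with its corners smoothed. That total space is $\wh\sX$.

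\emph{Main obstacle.} The hardest step is verifying that conversion of $\sA$ into a face nucleus, performed globally in $\sX$, agrees up to strong deformation equivalence with performing the conversions chart by chart in the $(\sX_j)_{+\sN_{\geq j+1,j}}$ and gluing horizontally. I would address this first in the split collars. There the model $\lambda_\sN+\mu_\phi$ from Example~\ref{ex:special-adj}(${\rm e}'$) is a product with $\sT^*\cI$, so the two constructions literally coincide. I would then use Lemma~\ref{lem:uniq conv} to absorb the remaining choices.
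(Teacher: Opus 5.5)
Your proposal is correct and follows essentially the same route as the paper. The paper gives no detailed argument (it calls the lemma straightforward), but the paragraph right after it records exactly your outline: compatibility of $\sA$ with the W-atlas induces an admissible decomposition of the nucleus, one ramifies the corners of $\sY$ along it to get $\sY^{\wedge,\bH}$, and vertical attachment then yields the $(k+1)$-story building $\sY^{\wedge,\bH}\to\sX_k\to\cdots\to\sX_1$. Your top-down regluing via the remnant mechanism of Lemma~\ref{lm:remains} and Proposition~\ref{prop:bldg-bottom-up}, and your comparison of global versus chart-wise conversion in the split collars, supply details the paper leaves implicit.
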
  
  Thus, if $\sX$ and $\sY$  are  $2n$-dimensional   W-blocks, $\sX$  has  a W-building structure $\sX=(\sX_k\to\dots\sX_1)$,   $\sN$ is a nucleus of a face $\sP$, then any embedding $\sN \to\p_\infty\sX$ as a W-hypersurface that is  compatible with the building structure of $\sX$ induces  an admissible decomposition $\sH$ of $\sN$, and hence, of $\sP$.  Creating additional corners of $\sY$ according to the decomposition and vertically attaching $\sY^{\wedge,\bH}\to\sX$  results in a $(k+1)$-story building $\sY\to \sX_k\to\dots\to\sX_1$.

 Conversely, suppose a W-block $(\sX,\lambda)$ is presented as a $k$-story  building $\sX_k\to\dots, \sX_1$.   Consider its bottom-up construction, as in Proposition \ref{prop:bldg-bottom-up}, i.e. take a sequence of buildings $\sX^{\leq i}=\sX_i\to\dots\sX_1$, $i=1,\dots, k$. According to
  Proposition \ref{prop:bldg-bottom-up} $\sX^{\leq i}$ is the result of vertical gluing of $\sX_{i}^{\frown,\bP_{i}}$ to $\sX^{\leq i-1}$ by identifying
  the nucleus $\sN_{\bP_i}^\frown$  of the smoothed facet   with the corresponding W-hypersurface    $\sN_{\bP_i}^\frown\subset\p_\infty\sX^{\leq i-1}$. Then by construction the hypersurface  $\sN_{\bP_i}^\frown$ is compatible with the building structure $\sX^{\leq i-1}=(\sX_{i-1}\to\dots\to\sX_1).$
 In other words, we have:
 \begin{lemma}\label{lm:up-vs-doen} In order that  the result of  successive ``bottom-up" vertical attachments $$\sX_k\to(\sX_{k-1}\to(\cdots\to (\sX_2 \to \sX_1)\cdots)$$ has a structure of a $k$-story building $\sX_k\to\dots\to\sX_1$  it is necessary and sufficient that the attaching hypersurfaces $\wt\sN_i\subset\p_\infty\sX^{\leq i-1}$, $i=1,\dots, k-1$, are compatible with the building structures $\sX^{\leq i}=(\sX_i\to\cdots \to\sX_1).$
 \end{lemma}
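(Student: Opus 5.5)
The two implications will be proved separately, both by induction on $k$. The base cases $k\le 2$ are tautological, since for two stories the top-down and bottom-up gluings coincide.

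\emph{Necessity.} Suppose the bottom-up result carries a $k$-story building structure $\sX_k\to\dots\to\sX_1$. Proposition \ref{prop:bldg-bottom-up} then provides the chain $\sX^{\leq 1}\subset\dots\subset\sX^{\leq k}$. Each $\sX^{\leq i}$ has the building structure $\sX_i\to\dots\to\sX_1$ and is obtained by vertically gluing $\sX_i^{\frown,\bP_i}$ along the hypersurface $\wt\sN^\frown_{\bP_i}\subset\p_\infty\sX^{\leq i-1}$. I will verify compatibility of $\wt\sN^\frown_{\bP_i}$ with the W-atlas $\Phi_j$ of $\sX^{\leq i-1}$ directly from the construction in that proof. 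There, $\wt\sN^\frown_{\bP_i}$ is the horizontal gluing of the pieces $\wt\sN_{i,j}$, each sitting as a W-hypersurface (or the remnant of one, in the sense of Lemma \ref{lm:remains}) in $\p_\infty(\sX_j)_{+\wt\sN_j}$. Consequently $\Phi_j^{-1}(\wt\sN^\frown_{\bP_i})=\wt\sN_{i,j}$, which is a W-hypersurface, and this is exactly the definition of compatibility.

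\emph{Sufficiency.} I argue by induction using Lemma \ref{lm:bldg-after}. Assume $\sX^{\leq i-1}$ already carries the building structure $\sX_{i-1}\to\dots\to\sX_1$, and the attaching hypersurface $\wt\sN_i$ of $\sX_i$ is compatible with it. Lemma \ref{lm:bldg-after} gives an admissible decomposition of the nucleus of the attaching facet of $\sX_i$ into pieces $\sM_j$, together with a dividing collection $\bH$. It also shows that $\sX^{\leq i}$ is the building $\sX_i^{\wedge,\bH}\to\sX_{i-1}\to\dots\to\sX_1$.

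It remains to identify $\sX_i^{\wedge,\bH}$ with the block $\sX_i$ appearing in the prescribed top-down data. In the intended situation $\sX_i$ itself has facets $\sP_{i,i-1},\dots,\sP_{i,1}$, and the block actually attached is their smoothing $\sX_i^{\frown,\bP_i}$. Corner ramification along the decomposition induced by compatibility recreates facets with nuclei $\sM_j=\sN_{i,j}$. The remark in the section on creating corners, that ramification inverts smoothing up to strong deformation equivalence, then gives $(\sX_i^{\frown,\bP_i})^{\wedge,\bH}\simeq\sX_i$. The independence statements of Lemma \ref{lem:uniq conv} and Remark \ref{rem:uniq gluing} then show that the resulting building is well defined up to strong deformation equivalence.

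\emph{Main obstacle.} The delicate step is checking that the decomposition $\bH$ produced by Lemma \ref{lm:bldg-after} matches the original facet list $\bP_i$. This requires that each $\Phi_j^{-1}(\wt\sN_i)$ lands inside the collar region of the converted face of $(\sX_j)_{+\wt\sN_j}$ in split form, so that the pieces glue horizontally rather than merely intersect. I would establish this from adaptedness (Definition \ref{def:W-hyp}(iii)--(iv)), which forces $C(\sA)\cap U_\sP$ to split as a product. That product form is exactly what makes the pieces meet along splitting hypersurfaces.
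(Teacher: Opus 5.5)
Your proposal follows the paper's own argument: necessity is read off from the construction in Proposition \ref{prop:bldg-bottom-up}, where the attaching hypersurface $\wt\sN^\frown_{\bP_i}$ is by construction the horizontal gluing of pieces lying in the charts $(\sX_j)_{+\wt\sN_j}$. Sufficiency is the iteration of Lemma \ref{lm:bldg-after}, exactly as in the discussion preceding the lemma. Your extra identification step $(\sX_i^{\frown,\bP_i})^{\wedge,\bH}\simeq\sX_i$ is not needed, because the paper tacitly takes the top block of the resulting building to be the ramified block $\sX_i^{\wedge,\bH}$. Moreover, adaptedness alone does not force the pieces $\sM_j$ to equal the original nuclei $\sN_{i,j}$ for arbitrary compatible data; that equality holds only in the situation produced by Proposition \ref{prop:bldg-bottom-up}, where it is true by construction.
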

 
  In particular, suppose  $M$ is a compact $n$-dimensional manifold with boundary, and $f:\p M\to \sX\setminus\Skel(\sX)$ is a Legendrian embedding compatible  with the building structure. Then vertically attaching the (generalized) cotangent block $\sT^*M$ to $\sX$ along the ribbon of the Legendrian we add one story to the building $\sX$. 
  
  In the next section we will show that any Legendrian embedding in a W-building  is Legendrian isotopic to a Legendrian embedding compatible with the building structure. As any W-block has a  Weinstein handlebody presentation we conclude that   every W-block structure is homotopic to  the structure of a (generalized) cotangent building.

\subsection{Making a  Legendrian  compatible with a cotangent building structure}
The goal of this section is the following proposition.   \begin{prop}\label{prop:Leg-adjustment}
Suppose a W-block $(\sX,\lambda)$ is presented as   a proper cotangent building
  $\sX  = (\sX_k \to\dots\to\sX_1)$ with $\sX_j=\sT^*M_j$.  Let $\Lambda\subset \p_\infty \sX$ be  a Legendrian.  Then there exists a     Legendrian  isotopy $\Lambda_u$, $u\in[0,1]$,   starting from $\Lambda_0=\Lambda$ such that   $\Lambda_1$ is compatible with  the  building structure.   \end{prop}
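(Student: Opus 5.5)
We argue by induction on the number of stories $k$. For $k=1$ the building has a single block $\sT^*M_1$ and there is nothing to prove, since every Legendrian in $\p_\infty\sX$ is then a Legendrian in the only chart of the W-atlas. For the inductive step we write $\sX=(\sX_{\geq 2}\to\sX_1)$. Here $\sX_{\geq 2}=(\sX_k\to\dots\to\sX_2)$ is a $(k-1)$-story proper cotangent building. It is glued along the nucleus $\sN_{\geq 2,1}$ of its facet $\sP_{\geq 2,1}$, which is identified with a W-hypersurface $\wt\sN_{\geq 2,1}\subset\p_\infty\sT^*M_1$. The ideal boundary $\p_\infty\sX$ then decomposes into three pieces:
\begin{enumerate}
\item the part $\p_\infty\sX_1\setminus \Op\,\wt\sN_{\geq2,1}$;
\item the part $\p_\infty\sX_{\geq 2}\setminus\Op\,\sP_{\geq 2,1}$;
\item the transition region coming from the conversion model of Example \ref{ex:special-adj}(${\rm e}'$). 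Over $\wt\sN_{\geq 2,1}$ this region is a product of (a neighborhood in) $\sN_{\geq 2,1}$ with a two-dimensional piece carrying the form $\mu_\phi$.
\end{enumerate}
Compatibility means two things: in the transition region $\Lambda$ must be a product $\Lambda_{\sN}\times(\text{interval})$, with $\Lambda_\sN$ a Legendrian in $\sN_{\geq2,1}$; and $\Lambda_\sN$ must itself be compatible with the induced $(k-1)$-story building structure on $\sN_{\geq2,1}$. So the plan is to first push $\Lambda$ into product position near the attaching region, then apply the inductive hypothesis to the nucleus and to $\sX_{\geq 2}$.

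Step one is general position. By a $C^\infty$-small Legendrian isotopy we make $\Lambda$ transverse to the codimension-one ``walls'' separating the three regions, and transverse to the skeleton cones $C(\Skel \wt\sN_{\geq 2,1})$. For dimensional reasons $\Lambda$ can be made disjoint from $C(\Skel\wt\sN_{\geq2,1})$ away from a small set. The key move is then a contact push. Near $\wt\sN_{\geq 2,1}$ the contact form is $dz+\lambda_{\sN}$, where $z$ is the Reeb coordinate. The Liouville flow of $\sN_{\geq 2,1}$, lifted to the contact manifold, is a contact vector field. Using it together with the Reeb direction, we contract the portion of $\Lambda$ lying over $\Op\,\wt\sN_{\geq2,1}$ toward a neighborhood of the skeleton of the nucleus. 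At the same time we straighten it into the form $\Lambda_\sN\times\{\text{segment}\}$ along the collar direction of the converted face.

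Concretely, I would use the fact that the front of $\Lambda$ in the Reeb direction over $\sN$ can be isotoped, by a contact isotopy supported near $\sN$, so that $\Lambda$ meets the model region $V\times U$ in $\Lambda\cap(V\times\{s=0\})$ times the $t$-interval. The Legendrian condition for the form $\lambda_V+\mu_\phi$ on the locus $s=0$, $t<\eps/2$ (where $\mu_\phi=sdt$) is automatic for such products. The remaining issue is the region $t\in[\eps/2,4\eps/5]$, where the form interpolates to $(t-\eps)ds$. There the product $\Lambda_\sN\times\{s=0\}$ remains Legendrian because $ds$ restricts to zero. Away from the skeleton this reduces to a standard Legendrian isotopy extension argument in a collar, so I expect it to be routine.

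The main obstacle is the corners and the interaction between stories. A Legendrian passing near a point where three or more blocks meet must be simultaneously in product form with respect to several collar coordinates $t_i$, and the different contact pushes for different faces must commute. I would handle this by performing the straightening face by face, in increasing order of codimension of the faces of the W-atlas, using the split collars $\sN\times\sT^*\cI^k$ so that each push respects the previously achieved product structure. Having done this, $\Lambda_\sN\subset\p_\infty\sN_{\geq 2,1}$ is a Legendrian in a $(k-1)$-story proper cotangent building. By induction there is an isotopy of $\Lambda_\sN$ making it compatible, and we extend it as a product over the collar. Finally we apply the inductive hypothesis to the portion of $\Lambda$ in $\p_\infty\sX_{\geq 2}$. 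To avoid undoing the previous adjustment, we need the inductive statement in a relative form: the isotopy is fixed near a face on which $\Lambda$ is already compatible. Therefore I would prove this strengthened relative version by the same induction.
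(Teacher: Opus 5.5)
Your inductive framework matches the paper's: first make $\Lambda$ adapted to the two-story structure $\sX_{\geq 2}\to\sX_1$, then induct inside $\sX_{\geq 2}$. Your remark that the nucleus Legendrian must also be made compatible, and that the induction should be run in relative form, addresses something the paper passes over quickly.

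\textbf{The gap.} It lies in the one step that carries the content: producing a Legendrian isotopy after which $\Lambda$ is a product $\Lambda_{\sN}\times(\text{interval})$ near the attaching facet. You assert this as ``the fact that the front \dots\ can be isotoped'' and call it routine, but you give no mechanism. The one you gesture at points the wrong way. In the collar of the facet, where adaptedness is imposed, $\lambda=\lambda_{\sN}+s\,dt$. Product position means $\Lambda\subset\{s=0\}$, i.e.\ $\Lambda$ sits over $\p_\infty\sN$, as far from $\Skel(\sN)$ as possible (compare the dividing set $\{z=0\}$ of $S$ in the introduction). Contracting toward the skeleton of the nucleus moves away from this. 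You also never identify the real obstruction: points of $\Lambda$ lying over $\Skel(\sN)$ are not squeezed at all by the natural push toward $s=0$, so they must be avoided beforehand.

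\textbf{The missing idea.} The paper uses an explicit conformally Liouville flow. Take $\sP=\{t=0\}$ and $\lambda=\lambda_{\sN}+s\,dt$ near it, and let $Y=Z_{\sN}-s\partial_s+2t\partial_t$. Its flow is $(x,s,t)\mapsto(Z_{\sN}^u x,e^{-u}s,e^{2u}t)$ and satisfies $(Y^u)^*\lambda=e^u\lambda$, so after a cut-off in $|t|$ it induces a contact isotopy of the ideal boundary. Modulo the Liouville flow $Z_{\sN}+s\partial_s$ it acts by $[x,s,t]\mapsto[x,e^{-2u}s,e^{2u}t]$.
\begin{enumerate}
\item The squeezing of $s$ pushes points toward $\p_\infty\sN\times\{s=0\}$, away from the nucleus skeleton.
\item The expansion in $t$ blows up a thin neighbourhood of the wall. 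Hence for large $u$ the part of $\Lambda$ near $t=0$ becomes the product of the projection of the slice $\Lambda\cap\p_\infty\sP$ to $\p_\infty\sN$ with an interval in $t$.
\end{enumerate}
This works only if that slice avoids the preimage of $\Skel(\sN)$, and that is arranged by general position on the wall. The slice has dimension $n-2$, while the preimage of $\Skel(\sN)$ in the $(2n-2)$-dimensional wall has dimension at most $n-1$, so they are generically disjoint. In the whole collar, $\Lambda$ may meet that preimage in isolated points, which is why the push must be localized near the wall; your ``disjoint away from a small set'' does not capture this. The bound $\dim\Skel(\sN)\le n-1$ is exactly where the hypothesis that the blocks are cotangent blocks enters, and your argument never uses it at this step.
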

  
      \begin{figure}[h]
\includegraphics[scale=0.3]{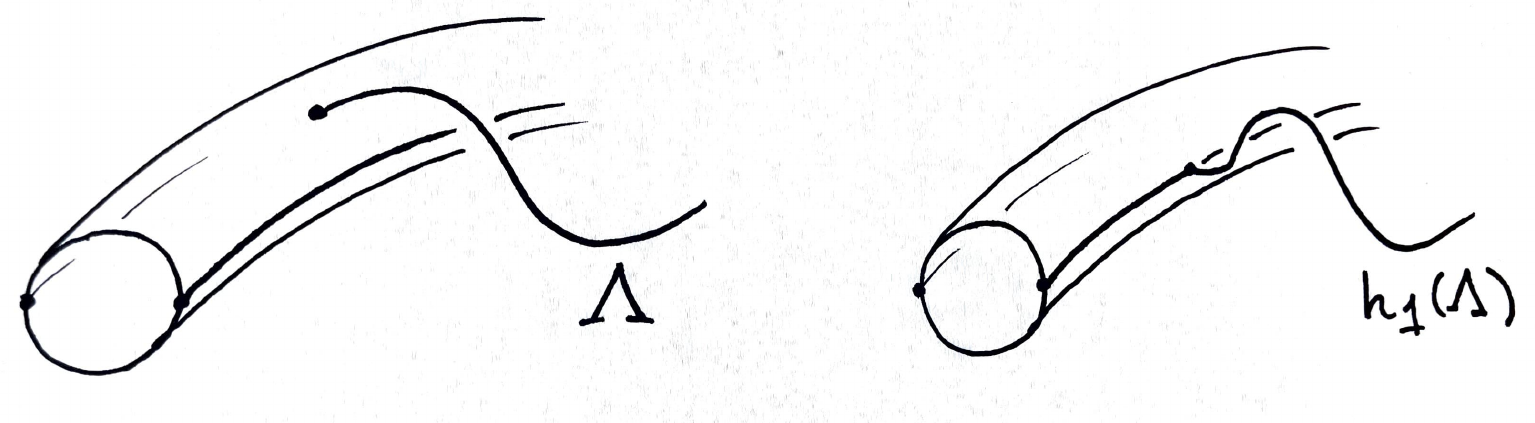}
\caption{Adjusting a Legendrian.}
\label{adjusting-leg}
\end{figure}
 
  \begin{proof} 
 We   prove the proposition by induction in $k$, beginning from $k=2$ (the case $k=1$ is a tautology).
  Suppose  $(\sX,\lambda)$ is presented as  a 2-story building $\sX_{ 2}\to \sX_1$, where $\sX_2$ is vertically glued to $\sX_1$ along a facet $\sP$ whose nucleus
  $\sN$ is realized as a W-hypersurface $\wt\sN\subset\p_\infty\sX_1$.   Since $\sX_2 = \sT^*M_2$ is a cotangent block, the nucleus $(\sN,\lambda_\sN)$ is a stratified subset of dimension $\leq n-1$. By definition $\sX$ is result of horizontal gluing $(\sX_1)_{+\wt\sN}\mathop{\cup}\limits_\sP\sX_2$, and  hence, $\sP$ is realized as a dividing hypersurface in $\sX$.  The Liouville form $\lambda|_{\Op\sP}$ can be written as $\lambda_{\sN}+sdt$ where $P=\{s=0\}$. We denote by $\pi$ the projection $\Op P\to\sN$. Consider the vector field $Y=Z_{\sN}-s\frac\p{\p s}+2t\frac \p{\p t}$ on $\Op \sP$.   The flow of $Y$ is given by $Y^u(x,s,t)=(Z_{\sN}^u(x), e^{-u}s, e^{2u}t)$, and hence
 $(Y^u)^*\lambda=e^u \lambda $.  Let $W$  be a defining domain for $\sX$. The flow $Y^u$ induces a (time-independent) contact isotopy on $\p W\cap\Op P$, and  hence, it may be cut-off for $|t|>\sigma$ for a small $\sigma>0$ to get a contact isotopy $h_u:\p W\to\p  W$ supported in $\p W\cap\{|t|\leq\sigma\}$. The isotopy $h_u$ is fixed on $Q:=\sY\cap\p W$ and leaves    $ \p W\cap\{t=0\}$ invariant.  We further reparametrize the isotopy to the interval $[0,1)$ by setting $\wh h_u=h_{\tan\frac{2u}\pi}$.
 For any  neighborhood $U\supset Q$, $U\subset \p W$, and any compact subset $C\subset  (\p W\cap\{|t|<\sigma\})\setminus\pi^{-1}(\Skel(\sN))$ there exists $u_0(C,U)\in(0,1)$ such that for $u\in[u_0,1)$ we have $\wh h_u(C)\subset U$.   On the other hand, for  a sufficiently small neighborhood $U\supset Q$ in $\p W$ we can use $s,t\in[-\sigma,\sigma]$ and $y\in Q$ as coordinates, and the isotopy $\wh h_u$ in these coordinates is given by $\wh h_u(y,s,t)=(y,e^{-\tan u}s,t).$  Thus, we can extend  $h_u|_C$ to a $C^\infty$-isotopy on   $u\in[0,1]$ by setting  $\wh h_1(y, s,t)=(y,0,t).$
  
   Let $\Lambda\subset \p W$ be  any Legendrian. We can assume that it intersects  transversely $P\cap \p W$ along an $(n-2)$-dimensional submanifold $M$. Generically, by dimensional reasons the image  $\pi(M)\subset\sN$ does not intersect $\Skel(\sN)$.
  For a sufficiently small $\sigma>0$ we can parametrize  $\Lambda$ by a Legendrian embedding $\Psi:M\times [\sigma,\sigma]\to \Lambda$ of the form $\Psi(y,t)=(\psi(y,t),t)\in (P\cap \p W)\times(-\eps,\eps)$, $y\in M, t\in(-\sigma,\sigma)$.   The compact set $\Psi(M\times [\sigma,\sigma])$  does not intersect $\pi^{-1}(\Skel(\sN))$ is $\sigma$ is small enough.  Let $p:\sY\setminus\Skel(\sY)\to Q$ be the projection along trajectories of the Liouville field $Z_{\sN}$.  Note that $p|_M:M\to Q$ is a Legendrian embedding. Then  there is defined a 
   Legendrian isotopy $\Psi_u:=\wh h_u\circ\Psi:M\times[-\sigma,\sigma]\to\p W\cap\{|t|\leq \sigma\} $, $u\in[0,1]$, such that    $\Psi_1(x,t)=(p\circ \psi(x,t), s=0, t) \in \Op Q $.   But this by definition means that the Legendrian $\wh\Lambda$ which coincides with $\Psi_1(M\times[-\sigma,\sigma])$ in $\p W\cap (P\times[-\sigma,\sigma])$ and with $\Lambda$ elsewhere in $\p W$ is adapted to the building structure $\sX_2\to\sX_1$.  
     
Suppose now that the proposition is already proven for buildings of height $<k$, $k\geq 3$. Let $\sX=(\sX_k\to\dots\to\sX_1)$ be a $k$-story building and $\Lambda\subset\sX$ a Legendrian.
By definition it is a result of vertical attachment of a $(k-1)$-story building $\sX_{\geq 2}$ to $\sX_1$, i.e. 
$\sX=
\sX_{\geq 2}\cup( \sX_1)_{+\wt \sN_{\geq 2}}$. Applying the $k=2$ case to the 2-story building $\sX=\sX_{\geq 2}\to\sX_1$ we deform $\Lambda$ to an adapted  Legendrian  $\Lambda'$ which intersects $\sX_{\geq 2}$ along a Legendrian $\Lambda'_{\geq 2}\subset\p_\infty\sX_{\geq 2}$ and intersects  $(\sX_1)_{+\wt \sN_{\geq 2}}$ along a Legendrian  $\Lambda'_1\subset\p_\infty( \sX_1)_{+\wt \sN_{\geq 2}}$. Thus to complete the proof it remains to apply the induction hypothesis to the Legendrian  $\Lambda'_{\geq 2}$ in the $(k-1)$-story building $\sX_{\geq 2}$. \end{proof}

\begin{remark}
The only point at which we used that the W-building was a proper cotangent building was when we asserted that the nucleus $(\sN,\lambda_\sN)$ is a stratified subset of dimension $\leq n-1$. The conclusion of Proposition \ref{prop:Leg-adjustment} therefore also applies to any W-building which satisfies that property. In particular the conclusion applies to any cotangent building. For arbitrary W-buildings the situation is less clear since we do not know that the potential is Morse or even Morse-Bott. However, one may certainly arrange that the nucleus $(\sN,\lambda_\sN)$ is a stratified subset of dimension $\leq n-1$ after a generic homotopy of the structure since under our definition it is always possible to Morsify potentials.
\end{remark}

  \subsection{Products and stabilizations of W-blocks}\label{sec:splitting-subcritical}
The product of two manifolds with corners (with corner structures) is naturally a manifold with corners (with the product corner structure), and likewise  the product of two W-blocks  is 
naturally a W-block
$$(\sX_1,\lambda_1)\times (\sX_2,\lambda_2)=(\sX_1\times \sX_2,\lambda_1\oplus\lambda_2)$$
with skeleton the product
$$\Skel (\sX_1,\lambda_1)\times \Skel (\sX_2,\lambda_2)=\Skel(\sX_1\times \sX_2,\lambda_1\oplus\lambda_2)$$
In particular, the product of proper cotangent blocks   is 
naturally a proper cotangent block
$$(\sT^*M_1,p_1 dq_1)\times (\sT^*M_2,p_2 dq_2)=(\sT^*(M_1 \times M_2),p_1 dq_1\oplus p_2 dq_2)
$$

In the context of W-blocks, there are  special cases of products to note and distinguish.
We will use the term  {\em stabilization} for  the product with  $(\sR^2,\frac12(pdq-qdp))$, and the term 
  {\em elongation} for the product
with $(\sT^*[0,1],udt)$.

    \begin{figure}[h]
\includegraphics[scale=0.3]{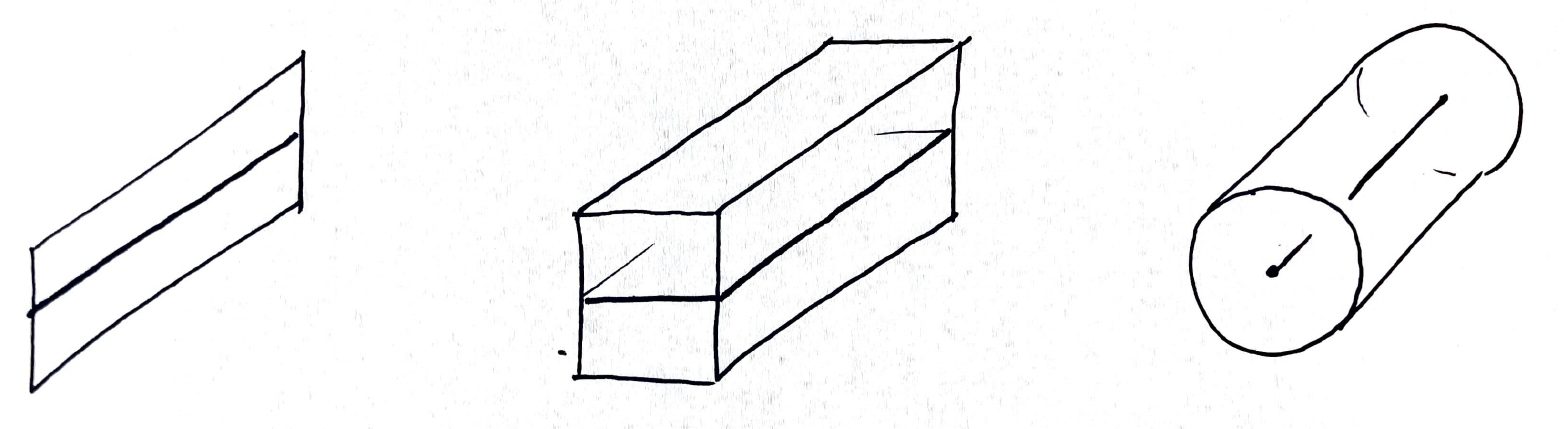}
\caption{The difference between elongation and stabilization.}
\label{elong-stab}
\end{figure}

 On the one hand, stabilization does not  change the collection of faces.  Each face  $\sP$ of $(\sX,\lambda)$ with nucleus $\sN$ yields a face $\sP\times \sR^2$ of $(\sX,\lambda)\times (\sR^2,\frac12(pdq-qdp))$ with nucleus $\sN\times \sR^2$.
 
 On the other hand, 
 elongation   associates to each   face $\sP$   of $(\sX,\lambda)$  with nucleus $\sN$   three  faces of
  $(\sX,\lambda)\times(\sT^*[0,1],udt)$ with nuclei of varying codimension: $\sP_{[0,1]}=\sP\times \sT^*I$ with  nucleus $\sN\times \sT^*I$, 
$\sP_0=\sP\times    \{t=0\}$ with  nucleus $\sN\times  \{t=0\}$, and  $\sP_1=\sP\times   \{t=1\}$ with  nucleus $\sN\times  \{t=1\}$.   
Note that (unlike elongation) the stabilization operation can also be applied to W-blocks with attracting boundary.

\subsection{Morse theory on W-blocks}
Let $(\sX,\lambda)$ be a W-block. Let $\wh\lambda:=\lambda^{\attract}$ be a modification of $\lambda$ which makes all faces attracting, see Definition \ref{def:attract}. We  will also assume that all zeros of $\lambda$ are non-degenerate.


Choose a potential   $\phi:\sX\to\RR$   for the Liouville field $Z$ of $\wh \lambda$ such that the level set $\{\phi=1\}$ is a defining domain for $(\sX,\wh\lambda)$, and all the critical values of $\phi$ are simple.

Note that  for every nucleus $\sN$ of a $j$-face $\sP\subset\p_j\sX$ the function $\phi|_{\sN}$ is Morse, and  thanks to the attracting condition hypothesis a critical point $p$ of $\phi|_{\sN}$ of index $i$ is a critical point of $\phi$ of index $i+j$.

Let $D^m$ be the unit disc in $\RR^m$. For each $k=0,\dots, m$ consider the  quadrant $$D_k^m:=D^m\cap\{x_1\geq 0,\dots,  x_{k}\geq 0\},$$ so that $D^m_0=D^m$. Then  $D_k^m$ is a manifold with corners, and   $\sT^*D_k^m=D_k^m\times\RR^m$ is a cotangent block. Denote $S_k^{m-1}:=\p D^{m}\cap\{x_1\geq 0,\dots,  x_{k}\geq 0\}$.
The next proposition is a ``cornered version" of the standard Weinstein handlebody decomposition statement for Weinstein manifolds.
\begin{prop}\label{prop:handle-attaching-n}  Let $\{\phi=c_1\}$ and $\{\phi=c_2\}$ be two regular levels of the potential $\phi$ such that the domain $\{c_1\leq\phi\leq c_2\}$ contains a unique critical point $p$ of index $m\leq n$. Suppose that
 $p\in \p_k\sX$, $k=0,\dots, n$,  (if $k=0$ this means that $p\notin \p \sX$). 
 Then  $\{\phi\leq c_2\}$ is the result of a vertical attachment of the cotangent block $\sT^*(D_k^m\times D^{n-m} )$ along  the    ribbon
of a    Legendrian embedding $S_k^{m-1}\times D^{n-m}\to \{\phi=c_1\}$.
\end{prop}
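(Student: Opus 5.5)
The plan is to reduce to the classical Weinstein handle-attachment statement inside the nucleus of the face containing $p$, and then to take a product with the corner collar coordinates. Let $\sP$ be the $k$-face with $p\in\sP$, with nucleus $\sN$ of dimension $2n-2k$. Near $\sN$ we have the split collar $U_\sP=\sN\times\sT^*\cI^k$. Since every face is attracting, the form there is $\wh\lambda=\lambda_\sN+\sum_{i=1}^k\mu_i$, with each $\mu_i$ as in Example~\ref{ex:special-adj}(d). Near the collar zero-section this agrees with model (c), i.e. $s_idt_i+\frac{t_i}{2}ds_i$ with field $s_i\p_{s_i}-\frac{t_i}{2}\p_{t_i}$. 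Hence each collar factor contributes one expanding direction ($s_i$) and one contracting direction ($t_i$). As noted before the statement, $p$ is a critical point of $\phi|_\sN$ of index $m-k$. I will first arrange, using Lemma~\ref{lm:space-of-pot} and the microfibration Lemma~\ref{lm:Ciel-perturb}, that $\phi=\phi_\sN+\sum_i\frac12(s_i^2-\frac{t_i^2}{2})$ near $p$, after a homotopy supported near $p$ that does not change the levels $c_1,c_2$.

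Step 1 is the standard case $k=0$, with $p$ interior. Here the usual Weinstein handle lemma applies. The stable manifold of $p$ is an isotropic disc $D^m$, and it meets $\{\phi=c_1\}$ in an isotropic sphere $S^{m-1}$. A standard neighborhood of $p$ is identified with the standard handle model, and after a homotopy supported near $p$ this model agrees with $\sT^*(D^m\times D^{n-m})$ attached along the ribbon of the Legendrian $S^{m-1}\times D^{n-m}$. This ribbon is the thickening of the isotropic sphere by its conormal directions, as in Example~\ref{lm:Leg-attach-ribbon}. Between critical levels, the flow of $Z$ identifies $\{\phi\le c_2\}$ with $\{\phi\le c_1\}$ union a neighborhood of the stable disc. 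Comparing with the construction of vertical gluing (W-hypersurface to nucleus conversion, then horizontal gluing) identifies the two up to deformation equivalence. The comparison uses Lemma~\ref{lem:uniq conv} and Remark~\ref{rem:uniq gluing}.

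Step 2 handles general $k$. Apply Step 1 inside $\sN$ to the critical point of index $m-k$. This gives $\{\phi_\sN\le c\}$ as an attachment of $\sT^*(D^{m-k}\times D^{n-m})$ along a ribbon of $S^{m-k-1}\times D^{n-m}$. Then take the product with the $k$ collar factors. In each factor $(\sT^*\cI,\mu_i)$, the stable set of $\{s_i=0\}$ is the half-line $\{s_i=0,\,t_i\ge0\}$, a copy of $[0,\eps)$. So the stable manifold of $p$ in $\sX$ is $D^{m-k}\times[0,\eps)^k$, which is a quadrant $D^m_k$ up to corner smoothing of the kind in Section~\ref{sec:smooth-corners}. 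Its intersection with $\{\phi=c_1\}$ is the corresponding quadrant sphere $S^{m-1}_k$. Its boundary pieces on $\{t_i=0\}$ are pure faces, lying in the nuclei of the facets of $\sX$, so the adapted conditions of Definition~\ref{def:adLeg} hold. The ribbon of $S^{m-1}_k\times D^{n-m}$ is then formed as in Remark~\ref{rem: stripping not pure}. The attached block is $\sT^*(D^m_k\times D^{n-m})$, whose faces $\{x_i=0\}$ match the facets $\{t_i=0\}$ of $\sX$ through the collar splittings. Finally, each attracting collar is converted back to the genuine split form using the external collars built from model (g), as remarked after Lemma~\ref{lm:Lyap-adjust}.

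I expect the main obstacle to be the compatibility in Step 2 between the model coming from the Morse chart and the corner structure. The product of a round handle in $\sN$ with the $[0,\eps)^k$ collar stable directions must be identified with the block $\sT^*(D^m_k\times D^{n-m})$ in a way that respects all corner collars and splittings, including the faces near $p$ that are not faces of $\sP$. To handle this, I would first straighten the potential to the split normal form above, using Lemma~\ref{lm:Ciel-perturb} together with the fact that split potentials restrict to potentials on nuclei. I would then smooth only the corner of the stable quadrant, leaving the collar coordinates of $\sX$ unchanged.
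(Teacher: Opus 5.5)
Your proposal is correct and is essentially the paper's argument, which just cites the normal form for Weinstein handle attachment. You apply that normal form at the interior critical point of $\phi|_\sN$ of index $m-k$ and take the product with the $k$ split attracting collar factors, whose contracting $t_i$-directions turn the core disc into the quadrant $D^m_k$ and the attaching sphere into $S^{m-1}_k$, with pure faces on the facets $\{t_i=0\}$. Two harmless slips: in model (d) the stable set of the origin along $\{s_i=0\}$ is $[0,\eps/2)$, not $[0,\eps)$ (the uniqueness of $p$ in $\{c_1\le\phi\le c_2\}$ is what makes $W^s(p)\cap\{\phi=c_1\}$ a compact quadrant sphere), and Lemma~\ref{lm:Ciel-perturb} is unnecessary and would move $\lambda$ off its split form, since the attracting model with the convexity of Lemma~\ref{lm:space-of-pot} already gives a split potential near $p$.
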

\begin{remark}  The Legendrian embedding $S_k^{m-1}\times D^{n-m}\to \{\phi=c_1\}$ is not pure when $m<n$.
Hence,  according to our convention from  Example \ref{lm:Leg-attach-ribbon}
 we vertically  attach in this case a generalized  cotangent block $\left(\sT^*(D_k^m\times D^{n-m})\right)^{P,\bigcirc}$  along a   ribbon
where  $P$  is the facet $D_k^m\times \p D^{n-m}$ of the Legendrian.
\end{remark}
 \begin{proof} This follows from the normal form for Weinstein handle attachment. 
 \end{proof}
 
 \subsection{Deforming a W-block into a cotangent building}\label{sec:block-to-bldg}
 \label{sec:building}
 \begin{thm}[Block to Building]
 \label{thm:WctoW} 
 Let $\sX$ be a $2n$-dimensional W-block. Then  up to homotopy
\begin{itemize}
\item [a)]   $\sX$ admits a  cotangent building  structure;
\item[b)] a descendant of $\sX$ admits a proper cotangent building structure.
\end{itemize}
 In the second case, we can take the  blocks  to be proper cotangent blocks of discs with corners, and in the first case, 
  we can take the  blocks to be 
   cotangent blocks of discs with corners, i.e.~proper cotangent blocks of discs with corners with some faces stripped.
\end{thm}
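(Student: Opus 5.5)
The plan is to induct along a Morse--theoretic handle decomposition, adding one story per critical point. First I will replace $\lambda$ by $\wh\lambda=\lambda^{\attract}$ as in Definition~\ref{def:attract}. Using Lemma~\ref{lm:Ciel-perturb} and Corollary~\ref{cor:W-Morse}, a $C^\infty$-small homotopy then makes the potential $\phi$ Morse with simple critical values $c_1<\dots<c_k$, with all zeros of the Liouville field non-degenerate. By Lemma~\ref{lm:Lyap-adjust} the skeleton and a defining domain survive this step. Reattaching external collars via model~(g) returns a W-block homotopic to $\sX$, so it suffices to work with $(\sX,\wh\lambda)$ and its sublevel sets $\sX^{\leq j}:=\{\phi\leq c_j+\delta\}$.

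Next I will prove by induction on $j$ that $\sX^{\leq j}$ is homotopic to a proper cotangent building $\sT^*(D^{m_j}_{k_j}\times D^{n-m_j})\to\dots\to\sT^*(D^{m_1}_{k_1}\times D^{n-m_1})$, where the top faces are allowed to be stripped. The base case is the minimum: a neighborhood of an index-$0$ point on a $k$-face is the cotangent block of a cornered disc. For the inductive step, Proposition~\ref{prop:handle-attaching-n} presents $\sX^{\leq j+1}$ as $\sX^{\leq j}$ with $\sT^*(D_k^m\times D^{n-m})$ vertically attached. The attachment is along the ribbon of a Legendrian $\Lambda=S_k^{m-1}\times D^{n-m}\subset\p_\infty\sX^{\leq j}$, and this ribbon is not pure when $m<n$, so it is a cotangent block with the facet $D^m_k\times\p D^{n-m}$ stripped (Remark~\ref{rem: stripping not pure}). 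I then apply Proposition~\ref{prop:Leg-adjustment}, or its extension to cotangent buildings from the remark following it, to isotope $\Lambda$ into a Legendrian compatible with the existing building structure. The isotopy extends to an ambient contact isotopy of $\p_\infty\sX^{\leq j}$, so the vertically glued block changes only by a Weinstein homotopy. Lemma~\ref{lm:comp-Leg-W} supplies a compatible ribbon. Lemma~\ref{lm:bldg-after} then shows that the glued block has a $(j+1)$-story building structure $(\sT^*(D_k^m\times D^{n-m}))^{\wedge,\bH}\to\sX_j\to\dots\to\sX_1$. Corner ramification of a cornered disc is again a cornered disc, so the new top block is the cotangent block of a disc with corners.

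After these steps, part a) follows: all blocks are cotangent blocks of discs with corners, with faces stripped exactly where the attaching Legendrians were non-pure or where the original W-block had its boundary faces made attracting. For part b) I will keep those faces instead of stripping them, using the ancestor order $\prec$. Before the handle attachment is read off, each stripped face is converted back into a facet by the W-hypersurface to nucleus conversion of Lemma~\ref{lem:uniq conv}, applied to the ribbon $W$-hypersurfaces. This produces a block $\sY$ with $\sX\prec\sY$, in which every block is proper. I will check that each conversion is compatible with the building by applying Lemma~\ref{lm:up-vs-doen}.

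I expect the inductive step to be the main obstacle. The difficulty is checking that the non-pure, cornered Legendrian $S^{m-1}_k\times D^{n-m}$ can be made compatible while keeping its corner structure adapted to the faces of $\sX$. Proposition~\ref{prop:Leg-adjustment} was stated for Legendrians in the ideal boundary, and its proof uses a generic-position argument in which $\pi(M)$ avoids $\Skel(\sN)$. The first thing I will try is to run that same Liouville-contraction isotopy $\wh h_u$ face by face, using the split collars $\sN\times\sT^*\cI^k$. Within each collar the construction is a product with the cotangent factor, so it should respect adaptedness.
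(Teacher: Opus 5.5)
Your part a) is essentially the paper's proof. The paper also uses a Morse potential for the attracting-boundary modification and inducts over critical points, applying Proposition~\ref{prop:handle-attaching-n} to each handle. It then uses Proposition~\ref{prop:Leg-adjustment} (with the remark after it) to make the attaching Legendrian compatible, and Lemma~\ref{lm:bldg-after} to add a story; non-pure subcritical attachments simply give stripped blocks. The cornered-Legendrian adjustment you single out is not where the difficulty lies. The gap is in part b).

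The faces you want to keep are the non-pure faces $D^m_k\times\p D^{n-m}$, $m<n$, of the subcritical handle blocks. Such a facet meets the attaching facet of the handle. So, by the description of facets of a vertical gluing (Section~\ref{sec:vert-gl}), its nucleus would have to be vertically glued to the nucleus of a facet of the lower building $\sX^{\leq j}$. Equivalently, the face $S^{m-1}_k\times\p D^{n-m}$ of the attaching Legendrian $\Lambda$ must be pure in the sense of Definition~\ref{def:adLeg}. But this face lies in the interior of $\p_\infty\sX^{\leq j}$, where there is no facet, and W-hypersurface to nucleus conversion does not create the right one:
\begin{itemize}
\item Converting the ribbon of $S^{m-1}_k\times\p D^{n-m}$ makes that manifold the \emph{skeleton} of the new nucleus, since the conversion adds its Liouville cone to $\Skel$. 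Purity instead requires it to be a Legendrian in the ideal boundary of the nucleus.
\item More seriously, even a compatible conversion $\sX^{\leq j}\mapsto \sX^{\leq j}_{+\sA}$ changes every block that $\Skel(\sA)$ meets. It replaces $\sT^*M_i$ by $(\sT^*M_i)_{+\sA_i}$, whose skeleton $M_i\cup C(\Skel(\sA_i))$ is in general not a manifold with corners, so the lower blocks stop being cotangent blocks at all.
\end{itemize}
Lemma~\ref{lm:up-vs-doen} only compares bottom-up and top-down gluing and does not address this. Your conversions do produce a descendant of $\sX$, but nothing makes its blocks proper.

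What is missing is a way to create facets that contain the boundaries of all subcritical attaching Legendrians, in every block at once and in product form. The paper gets this from Proposition~\ref{thm:Cieliebak-subcrit}, a cornered version of Cieliebak's splitting theorem. After passing to a descendant, the subcritical sublevel set $\{\phi\leq c_1\}$ is homotopic to a stabilization $\sY\times\sR^2$ with $\dim\sY=2n-2$. Replacing $\sR^2$ by its descendant $\sT^*I$ gives the elongation $\sY\times\sT^*I$, whose faces $\sY\times\{0\}$ and $\sY\times\{1\}$ do exactly this job.

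Part b) is then proved by induction on the dimension, not on the number of critical points. A descendant of $\sY$ is a proper cotangent building, and multiplying by $\sT^*I$ keeps it proper. Only index-$n$ handles remain, and their attaching spheres $S^{n-1}_k$ are pure. So Proposition~\ref{prop:Leg-adjustment} and Lemma~\ref{lm:bldg-after} add proper blocks $\sT^*D^n_k$. Without this step, or something of comparable strength, your part b) is not established.
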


  \begin{proof} We first prove b)  by induction in dimension $2n$.
   The base of induction $n=0$ is trivial. Suppose that the statements holds for W-blocks of dimension $<2n$.
    Let $Z$ be the Liouville field of $\lambda$, $\wh Z$ its attracting boundary modification,  and $\phi:\sX\to\RR$ is a Morse potential. We can assume that there are regular values $c_1<c_2\in\RR$ such that  all critical values $>c_1$ have index $n$,   all critical values $<c_1$ have index $<n$ and there are no critical values $>c_2$.
    
    We will need the following proposition,  which is an analog of Cieliebak's theorem from  \cite{Ci02} (see  also \cite{CE12}):
 
\begin{prop}\label{thm:Cieliebak-subcrit}
 Suppose a  $2n$-dimensional  W-block with attracting boundary admits a Morse potential with critical points of index $<n$. Then  it is homotopic, after  possibly passing to a descendant block, to  a stabilization of another W-block with attracting boundary.
 \end{prop}  
  \begin{proof}    We argue by induction in the number of critical points.
  At  the induction step we need to show that if  
  $ \sX= \sY\times \sR^2$ is  the stabilization   a W-block  $\sY$, and  $\wh\sX$ is the result  of   vertical attaching to $\sX$ of a proper cotangent block   $\sT^*(D_k^m\times D^{n-m} )$ with $m<n$, $0\leq k\leq m$, then 
  $\wh \sX$ is deformation equivalent to  the stabilization  $\wh\sY\times\sR^2$ where the W-block $\wh \sY$ is the result of vertical attaching to $\sY$ of the proper cotangent block   $\sT^*(D^m_k\times D^{n-m-1} )$. We first sketch   Cieliebak's argument  for  the case $k=0$, and then explain minor adjustments  for the general case. 
   Let $W$ be a defining domain for $(\sY,\lambda_\sY)$, and $\xi|_W$   the contact structure on the vertical boundary $\p_vW$ induced by the Liouville form $\lambda_\sY$. Let $h:W\to\RR$ be  a potential normalized to have the constant maximal regular value $0$ on $\p W$. Then the function $H:=h+p^2+q^2$  on $W\times\RR^2$ is a potential for the stabilization $\sX=\sY\times \sR^2$ , and $V=\{H=0\}\subset W\times\R^2$ is a defining domain for $\sX$.  The vertical boundary $\p_vV$   carries the contact structure $\xi_V$ induced by the Liouville for $\lambda_{\sY}\oplus(pdq-qdp)$ and contains $(\p_vW,\xi_W)$ as its codimension 2 contact submanifold with a fixed trivialization $\mu^1,\mu^2\in\xi_V|_W$ of the  normal bundle. We have a canonical projection $\pi:\p_vV\to\p_vW$ such that $\pi|_{\p_v W}=\Id$ and for any $x\in \Int W$ the pre-image $\pi^{-1}(x)\subset\p_vV$ is diffeomorphic to a circle.
  The proper cotangent block
  $\sT^*(D^m\times D^{n-m} )$ is glued to $V$ along a ribbon of a Legendrian embedding  $\phi:S^{m-1}\times D^{n-m}\to\p_vV$. 
    Our goal is to find an isotropic  isotopy $ \phi_t:S^{m-1}\times D^{n-m}\to\p_vV$ $t\in[0,1]$ such that
  \begin{itemize} \item $  \phi_0= \phi$;
    \item $ \phi_1(S_k^{m-1})\times  D^{n-m-1}\subset  \p_vW$.   
  \end{itemize}
  Let $K:=\Skel(W)=\Skel(\sY)$ be the  skeleton of $\sY$. As $\dim W=2n-2$ the skeleton $K$  is a  $\leq(n-1)$-dimensional stratified subset of $W$. Then $\pi^{-1}(K)\subset \p_vV$ is a   stratified subset of dimension $\leq n $ of the $(2n-1)$-dimensional  contact manifold $\p_vV$.  On the other hand $\dim \phi(S^{m-1}\times 0)=m-1<n-1$. Hence, by a generic $C^\infty$-small isotopy  together with a shrinking   of the factor $D^{n-m}$ we can arrange that $\phi(S^{m-1}\times D^{n-m})\cap\pi^{-1}(K)=\varnothing$.  The Liouville projection $\ell:W\setminus K\to\p_vW$ lifts to a projection
  $\ol\ell:  \p_vV\setminus\pi^{-1} (K)\to\p_vW$, which yields a homotopy  $\wh\phi_t$ connecting  $\wh\phi_0= \phi$ with a  map into $\p_v W$. There exists a family of injective isotropic homomorphisms $\wh\Phi_t:T(S^{m-1}\times D^{n-m})\to \xi$  which covers the homotopy $\wh\phi_t$, such that $\wh\Phi_0=d\phi $. Moreover, we can arrange that  
  $ \wh\Phi_1(T(S^{m-1}\times D^{n-m-1}))\subset T\p_vW$. Indeed, the obstruction   to do that is    in $\pi_{m-1}(S^{2n-1})=0$.
  Now we use  Gromov's existence $h$-principle for Legendrian immersions to modify the embedding $\wh\phi_t$ to a Legendrian  regular homotopy   $\ol\phi_t: S^{m-1}\times D^{n-m}\to \p_vV$ such that $\ol\phi_0=\phi$ and $\ol\phi_1(S^{m-1}\times D^{n-m-1})\subset \p_vW$. By dimensional reasons  and shrinking if necessary the factor $D^{n-m}$, we can deform the Legendrian regular homotopy to a Legendrian isotopy with the same properties. 
  
  The case $k>0$ is similar except that one needs to argue inductively over the dimension of faces of the manifold with corners $S^{m-1}_k$, successively extending the Legendrian  isotopy to boundary faces of increasing dimension, and finally extending it the interior  of $S^{m-1}_k$.
   \end{proof} 
  We now continue our proof of Theorem  \ref{thm:WctoW}.
Inductively applying  Proposition ~\ref{thm:Cieliebak-subcrit} for critical points of $\phi$  with critical values $<c_1$ we conclude     that there is a  W-block $(\sY,\mu)$ of dimension $2n-2$ such that  the domain $\{\phi\leq c_1\}$ is a defining   domain for the stabilization 
$(\wt \sX,\wt \lambda ):=(\sY,\mu)\times(\sR^2,\frac12(udt-tdu))$.   Passing from $(\sR^2,\frac12(sdt-tds))$  to a descendant W-block   $(\sT^*I,sdt)$ we   get a  product W-block $ (\sY,\mu)\times (\sT^*I,sdt)$, descendant to  the block $\wt \sX$. 
 We will keep the notation $\wt\sX $ for the descendant block.
 By the induction  hypothesis,   there is a W-block descendant to $\sY$ which has a structure of  a cotangent building. This structure   survives   under the multiplication by the block $(\sT^*I,sdt)$. 
 Thus,
 $\wt\sX$ has a structure of proper cotangent building $\sX_k\to\cdots \to \sX_1$.

 According to Proposition \ref{prop:handle-attaching-n}  one gets the W-block $\sX$  by   vertically attaching to $\sX_{\leq n-1}$ proper  cotangent blocks $\sT^*D_k^n$, $k\in\{0,\dots, n\}$ along ribbons of  disjoint Legendrian embeddings $S_k^{n-1}\to\sX_{\leq n-1}\setminus\Skel(\sX_{\leq n-1})$.
  Using Proposition \ref{prop:Leg-adjustment} we can  make these embeddings compatible   with the  building structure of $\sX_{\leq n-1}=(\sX_{n-1}\to\cdots\to \sX_1)$. It remains to apply Lemma \ref{lm:bldg-after}   to conclude that a descendant block to $\sX$  has a proper cotangent building structure.
  
  \medskip
  The proof of case a) is more straightforward. Assuming that all critical values are distinct we induct over their numbers.
  Thus, the inductional step deals with a vertical attaching of a    cotangent block along a  Legendrian embedding $j:S_k^{n-1}\to\sX_{\leq m-1}\times D^{n-m}\setminus\Skel(\sX_{\leq n-1})$. If $m<n$ then the Legendrian embedding $j$ is not pure, and hence, we attach along it a non-proper cotangent block. Then we apply  Propositions \ref{prop:handle-attaching-n}, \ref{prop:Leg-adjustment}   and Lemma \ref{lm:bldg-after}, as in case b). On the other hand, we do not  need to use  in this case Theorem~\ref{thm:Cieliebak-subcrit}, and hence, do not have    to pass to a descendant block.
      \end{proof}
    \begin{remark}[Relative case]\label{rm:rel-case}
  Suppose that  $ \sX $ is a   W-block,  $W$ its defining domain, and $\phi:W\to\RR$ a potential  such that $\p W=\{\phi=c\}.$ Let $\wt c<c$ be a regular level of $\phi$ and $\wt\sX$  a $W$-block with the defining domain  $\wt W:=\{\phi\leq \wt c\}$.
If   $\wt\sX$  admits  a W-building structure
 $\sX_m\to\dots\to \sX_1$  then the proof of Theorem \ref{thm:WctoW}   yields a  cotangent building structure  $  \sX_{m+k}\to\dots  \sX_{m+1}\to\wt \sX_m\to\dots\to  \sX_1$  for $\sX$. If $\sX_m\to\dots\to  \sX_1$ is a proper cotangent building structure and all critical points  with critical values in $\{\wt c\leq\phi\leq c\}$ are of index $n$ then the construction yields the proper building structure   $  \sX_{m+k}\to\dots  \sX_{m+1}\to \sX_m\to\dots\to  \sX_1$  for $\sX$. However, if $\{\wt c\leq\phi\leq c\}$ contains critical  values of points of index $ 0<j<n$ then the proper cotangent building structure can be extended  to $\sX$ only at expense of 
 {\em rounding the handles}, i.e.  adding proper cotangent blocks of the form $\sT^*(S^{j-1}\times D^{n-j+1})\setminus \Int D^n$, attached along Legendrian embeddings of $S^{j-1}\times \p D^{n-j+1}$.
 \end{remark}

\end{spacing}




  
 




\end{document}